\newtheorem{theorem}{Theorem}[section]
\newtheorem{corollary}[theorem]{Corollary}
\newtheorem{lemma}[theorem]{Lemma}
\newtheorem{proposition}[theorem]{Proposition}
\newtheorem*{theorem*}{Theorem}
\newtheorem*{lemma*}{Lemma}
\newtheorem*{remark*}{Remark}
\newtheorem*{definition*}{Definition}
\newtheorem*{proposition*}{Proposition}
\newtheorem*{corollary*}{Corollary}
\numberwithin{equation}{section}
\newcommand{\real}{\mathbb{R}}
\let\ced=\c         
\def\a{\alpha}
\def\b{\beta}
\def\e{\varepsilon}        
\def\cl{{\cal L}}
\newcommand{\RR}{\mathbb{R}}
\def\qed{\,\unskip\kern 6pt \penalty 500
\raise -2pt\hbox{\vrule \vbox to8pt{\hrule width 6pt
\vfill\hrule}\vrule}\par}
\definecolor{darkblue}{rgb}{0.05, .05, .65}
\definecolor{darkgreen}{rgb}{0.1, .65, .1}
\definecolor{darkred}{rgb}{0.8,0,0}
\newcommand{\beqn}{\begin{equation}}
\newcommand{\eeqn}{\end{equation}}
\newcommand{\bear}{\begin{eqnarray}}
\newcommand{\eear}{\end{eqnarray}}
\newcommand{\bean}{\begin{eqnarray*}}
\newcommand{\eean}{\end{eqnarray*}}
\begin{document}

\title{\huge \bf Eternal solutions to a singular diffusion equation with critical gradient absorption\footnote{Partially supported by Laboratoire Europ\'een Associ\'e CNRS Franco-Roumain MathMode Math\'ematiques \& Mod\'elisation and ANR project CBDif-Fr ANR-08-BLAN-0333-01}}

\author{
\Large Razvan Gabriel Iagar\,\footnote{Departamento de An\'alisis
Matem\'atico, Univ. de Valencia, Dr. Moliner 50, 46100, Burjassot
(Valencia), Spain, \textit{e-mail:}
razvan.iagar@uv.es},\footnote{Institute of Mathematics of the
Romanian Academy, P.O. Box 1-764, RO-014700, Bucharest, Romania.}
\\[4pt] \Large Philippe Lauren\c cot\,\footnote{Institut de
Math\'ematiques de Toulouse, CNRS UMR~5219, Universit\'e de
Toulouse, F--31062 Toulouse Cedex 9, France. \textit{e-mail:}
Philippe.Laurencot@math.univ-toulouse.fr}\\ [4pt] }
\date{\today}
\maketitle

\begin{abstract}
The existence of nonnegative radially symmetric eternal solutions of
exponential self-similar type $u(t,x)=e^{-p\beta t/(2-p)}
f_\beta(|x|e^{-\beta t};\beta)$ is investigated for the singular
diffusion equation with critical gradient absorption
\begin{equation*}
\partial_{t} u-\Delta_{p} u+|\nabla u|^{p/2}=0 \quad \;\;\hbox{in}\;\; (0,\infty)\times\real^N
\end{equation*}
where $2N/(N+1)<p<2$. Such solutions are shown to exist only if the
parameter $\beta$ ranges in a bounded interval $(0,\beta_*]$ which
is in sharp contrast with well-known singular diffusion
equations such as $\partial_{t}\phi-\Delta_{p} \phi=0$ when
$p=2N/(N+1)$ or the porous medium equation
$\partial_{t}\phi-\Delta\phi^m=0$ when $m=(N-2)/N$. Moreover, the
profile $f(r;\beta)$ decays to zero as $r\to\infty$ in a faster way
for $\beta=\beta_*$ than for $\beta\in (0,\beta_*)$ but the
algebraic leading order is the same in both cases. In fact, for
large $r$, $f(r;\beta_*)$ decays as $r^{-p/(2-p)}$ while
$f(r;\beta)$ behaves as $(\log r)^{2/(2-p)} r^{-p/(2-p)}$ when
$\beta\in (0,\beta_*)$.
\end{abstract}

\vspace{2.0 cm}

\noindent {\bf AMS Subject Classification:} 35K67, 35K92, 34B40,
34C11, 35B33.

\medskip

\noindent {\bf Keywords:}  Eternal solution, singular diffusion,
critical exponent, gradient absorption, self-similar solution,
$p$-Laplacian, uniqueness.

\newpage

\section{Introduction}\label{seint}

A commonly observed feature of nonnegative solutions to diffusion equations in the whole space $\real^N$ is their decay to zero as time increases to infinity. This convergence to zero takes place at different speeds depending on the equation under consideration (and also possibly on the initial data) and three different behaviours are usually observed. The most frequently met are algebraic decay to zero and finite time extinction. Roughly speaking, in the former, the $L^\infty$-norm of the solution at time $t>0$ decays as $t^ {-\alpha}$ for some positive parameter $\alpha$ depending on the equation and possibly on the integrability or decay properties of the initial data. In the latter, the solution is driven to zero in finite time and vanishes identically afterwards. Algebraic decay is well-known for the heat equation $\partial_t u -\Delta u = 0$ in $(0,\infty)\times\real^N$ and its nonlinear counterparts, the porous medium equation
\begin{equation}
\partial_t u -\Delta u^m = 0 \;\;\mbox{ in }\;\; (0,\infty)\times\real^N\,, \label{zpme}
\end{equation}
for $m>m_c:=(N-2)_+/N$ and the $p$-Laplacian equation
\begin{equation}
\partial_t u -\Delta_p u = 0 \;\;\mbox{ in }\;\; (0,\infty)\times\real^N\,, \label{zplap}
\end{equation}
for $p>p_c:=2N/(N+1)$. Finite time extinction is a more singular phenomenon and is already well-known for \eqref{zpme} when $m\in (0,m_c)$ and for \eqref{zplap} when $p\in (1,p_c)$, see \cite{VazquezBook, VazquezSmoothing} and the references therein. The above description reveals that, for the aforementioned examples, one value of the parameter is excluded, namely $m=m_c$ for \eqref{zpme} and $p=p_c$ for \eqref{zplap}. For these choices of the parameters $m$ or $p$, the convergence to zero is expected to be faster than any negative power of time without reaching zero in finite time. Exponential decay is then rather natural to be observed in these borderline cases though proving that it is indeed the case is far from being obvious, see \cite{GPV00}  for \eqref{zpme} with $m=m_c$ and \cite[Proposition~3.3]{IL1} for \eqref{zplap} with $p=p_c$. A difficult question is then to figure out which exponential decay rates are allowed or not, a characteristic property of critical exponents being the complexity of the possible behaviours. For instance, for the porous medium equation \eqref{zpme} with $m=m_c$, explicit self-similar solutions are available showing that, given any $a>0$, there is at least one solution with $L^\infty$-norm decaying exactly as $e^ {-at}$ as $t\to\infty$ \cite[Section~5.6.1]{VazquezSmoothing}. However, as shown in \cite{GPV00}, there are solutions decaying with a superexponential rate $e^ {-C t^ {N/(N-2)}}$. These results have a direct counterpart for the $p$-Laplacian equation \eqref{zplap} owing to the connection between radially symmetric solutions of the two equations established in \cite{ISV}.

A similar dichotomy has also been observed and thoroughly investigated for diffusion equations with absorption such as
\begin{equation}
\partial_t u -\Delta u^m + u^q =  0 \;\;\mbox{ in }\;\; (0,\infty)\times\real^N\,, \quad m>m_c\,,\label{zpmab}
\end{equation}
and
\begin{equation}
\partial_t u -\Delta_p u + u^q = 0 \;\;\mbox{ in }\;\; (0,\infty)\times\real^N\,, \quad p>p_c\,,\label{zplab}
\end{equation}
see \cite{GV03,VazquezSurvey} and the references therein. For these equations, algebraic decay takes place for $q>1$ while it readily follows from the comparison principle that there is finite time extinction when $q\in (0,1)$. More recently, diffusion equations with gradient absorption such as
\begin{equation}
\partial_t u -\Delta u^m + |\nabla u|^q = 0 \;\;\mbox{ in }\;\; (0,\infty)\times\real^N\,, \quad m>m_c\,,\label{zpmgab}
\end{equation}
and
\begin{equation}
\partial_t u -\Delta_p u + |\nabla u|^q = 0 \;\;\mbox{ in }\;\; (0,\infty)\times\real^N\,, \quad p>p_c\,,\label{zplgab}
\end{equation}
have been studied and algebraic decay have been obtained for
\eqref{zpmgab} when $(m,q)\in (m_c,1)\times (1,(2+mN)/(N+1))$
\cite{SW07} and $(m,q)\in (1,2)\times (1,2)$, $m<q$, \cite{ATU04}
and for \eqref{zplgab} when $(p,q)\in [2,\infty)\times (1,\infty)$
and $(p,q)\in (p_c,2)\times (p/2,\infty)$, see
\cite{ATU04,BL08,IL1,La08} and the references therein. Extinction in
finite time has also been established for \eqref{zplgab} when $p\in
(1,2]$ and $q\in (0,p/2)$ \cite{BLS02, BLSS02, IL1} with the
interesting novelty that the exponent $q$ below which the extinction
phenomenon takes place depends on the diffusion. In the borderline
case $q=1$ for \eqref{zpmab} and \eqref{zplab}) and $q=p/2$ for
\eqref{zplgab}, the situation seems to differ from that encountered
for the diffusion equations \eqref{zpme} and \eqref{zplap} as there
seems to be more constraints on the possible exponential decays.
Indeed, for \eqref{zpmab} and \eqref{zplab} with $q=1$,  a
straightforward application of the comparison principle guarantees
that the $L^\infty$-norm of the solution at time $t>0$ is bounded
from above by $e^ {-t}$ while a direct computation shows that the
$L^1$-norm of the solution decays exactly as $e^ {-t}$ for large
times. These two facts seem to indicate that arbitrary large
exponential decays are excluded. As for \eqref{zplgab} with $p\in
(p_c,2)$ and $q=p/2$, we proved in \cite[Theorem~1.2 \&
Proposition~5.2]{IL1} that, for initial data $u_0$ decaying
sufficiently rapidly at infinity, there are two positive constants
$C_1(u_0)>C_2(u_0)>0$ such that $e^ {-C_1(u_0) t} \le
\|u(t)\|_\infty \le e^{-C_2(u_0) t}$ for $t\ge 1$. Owing to the
dependence of the constants on $u_0$, we cannot deduce from this
result that only some exponential decay rates are admissible for
solutions to \eqref{zplgab} with $p\in (p_c,2)$ and $q=p/2$. The
purpose of this work is to go one step further in that direction by
studying the existence of self-similar solutions to this equation of
the form
\begin{equation}
u(t,x) = e^{-\alpha t}\ f\left( |x|e^{-\beta t} \right)\,, \quad (t,x)\in (0,\infty)\times\real^N\,, \label{self3}
\end{equation}
and to find out whether there are positive values of $\alpha$ and $\beta$ for which there are nonnegative and integrable solutions. As already mentioned, for \eqref{zpme} with $m=m_c$ and \eqref{zplap} with $p=p_c$, such solutions exist for any $\alpha>0$ with a specific value of $\beta$ depending on $\alpha$ and $N$.  In contrast, we will show in this paper that, for \eqref{zplgab} with $p\in (p_c,2)$ and $q=p/2$, there is a maximal decay rate $\alpha_*>0$ such that nonnegative and integrable solutions of the form \eqref{self3} only exist for $\alpha\in (0,\alpha_*]$, the corresponding profile $f$ having different properties for $\alpha\in (0,\alpha_*)$ and $\alpha=\alpha_*$.

We thus focus on the study of the existence and properties of solutions of the form \eqref{self3} to the
following singular diffusion equation
\begin{equation}\label{eq1}
\partial_{t}u-\Delta_{p}u+|\nabla u|^{p/2}=0, \quad
(t,x)\in(0,\infty)\times\real^N,
\end{equation}
where
\begin{equation}
p_c = \frac{2N}{N+1} < p <2\,. \label{zexp}
\end{equation}

Inserting the ansatz \eqref{self3} in \eqref{eq1} and setting $r=|x|e^{-\b t}$, we obtain that $\a$ and $\b$ shall satisfy
\begin{equation}
\a=\mu \b \,, \quad \mu := \frac{p}{2-p}\,, \label{zab}
\end{equation}
and the profile $f$ solves the differential equation
\begin{equation}\label{ODE1}
(|f'|^{p-2}f')'(r)+\frac{N-1}{r}(|f'|^{p-2}f')(r)+\a f(r)+\b
rf'(r)-|f'(r)|^{p/2}=0,
\end{equation}
with $f'(0)=0$. Next, it is straightforward to check that, if $f$ solves \eqref{ODE1} with $f'(0)=0$, then so does $f_\lambda : r\longmapsto \lambda^\mu\ f(\lambda r)$ for any $\lambda>0$ with $f_\lambda'(0)=0$ and $f_\lambda(0)=\lambda^\mu\ f(0)$.
Thanks to this scaling invariance and \eqref{zab}, we can restrict the analysis to the following problem
\begin{equation}\label{ODE2}
\left\{\begin{array}{l}
\displaystyle{(|f'|^{p-2}f')'(r)+\frac{N-1}{r}(|f'|^{p-2}f')(r)+\b(\mu
f(r)+rf'(r))-|f'(r)|^{p/2}=0},\\
 \\
f(0)=1, \ f'(0)=0,\end{array}\right.
\end{equation}
where $\mu=p/(2-p)>N$ by \eqref{zexp}. The main result of this paper uncovers a threshold value of the parameter $\beta$ below which \eqref{ODE2} has a positive solution defined on $[0,\infty)$ and identifies the behaviour of the corresponding solution as $r\to\infty$.

\begin{theorem}\label{th1}
There exists $\b_{*}>0$ such that, for any $\b\in(0,\b_{*}]$, there is a positive solution $f(\cdot;\b)\in C^1([0,\infty))$
to \eqref{ODE2} which satisfies:

\begin{itemize}
\item[(i)] If $\b=\b_{*}$, then $r^{\mu}f(r;\b_{*})\to w^{*}$ as
$r\to\infty$, where
\begin{equation}
w^*:=\frac{(\mu-N)^{2/(2-p)}}{\mu}\,. \label{zw}
\end{equation}

\item[(ii)] If $\b\in(0,\b_{*})$, then $r^{\mu}f(r;\b)\sim (K_{\infty}(\b)\log r)^{\mu+1}$ as $r\to\infty$, where $K_\infty(\b):=\mu^{p/2}/((\mu+1)\b)$.
\end{itemize}

In addition, for $\beta\in (0,\beta_*]$ and $t_0\in\real$, the function
\begin{equation*}
U_{\b,t_0}(t,x)=e^{-\mu \b (t+t_0)}f(|x|e^{-\b (t+t_0)};\b), \quad
(t,x)\in\real\times\real^N,
\end{equation*}
is a nonnegative and integrable self-similar solution to \eqref{eq1}.
\end{theorem}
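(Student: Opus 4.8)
The plan is to keep the reduction already performed in the excerpt: by \eqref{self3}--\eqref{zab} the function $U_{\beta,t_0}$ solves \eqref{eq1} if and only if the profile solves \eqref{ODE2}, so the whole problem is reduced to the scalar ODE \eqref{ODE2} in the single parameter $\beta$, which I would treat by a shooting argument combined with a phase-plane analysis of the behaviour as $r\to\infty$. First I would establish local well-posedness of \eqref{ODE2}: for each $\beta>0$ there is a unique $f(\cdot;\beta)\in C^1$ on a maximal interval $[0,R(\beta))$ with $f(0)=1$, $f'(0)=0$, the degeneracy of the $p$-Laplacian at $f'=0$ being handled through a fixed-point formulation of the integrated equation. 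A short computation from \eqref{ODE2} near $r=0$ (where $\mu f+rf'\approx\mu>0$, so the right-hand side drives $f'$ negative) shows that $f$ is initially strictly decreasing, and one checks that $f'$ cannot vanish again while $f>0$; hence $f(\cdot;\beta)$ is positive and decreasing on $[0,R(\beta))$, with either $R(\beta)=\infty$ or $f(R(\beta);\beta)=0$.

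The core is a continuity scheme in $\beta$. I would let $\mathcal{G}$ be the set of parameters for which $f(\cdot;\beta)$ stays positive on all of $[0,\infty)$ and its complement the set for which $f$ reaches zero at a finite radius. A comparison- or energy-type estimate should show that small $\beta$ belongs to $\mathcal{G}$ (weak convection keeps $f$ positive) while large $\beta$ lies in the complement (strong convection forces $f$ to vanish in finite radius), and continuous dependence on $\beta$ shows that $\mathcal{G}$ is an interval with nonempty interior. Setting $\beta_*:=\sup\mathcal{G}$, a compactness argument applied to the family $\{f(\cdot;\beta)\}$ as $\beta\uparrow\beta_*$ yields a positive global solution at $\beta_*$, so that $\mathcal{G}=(0,\beta_*]$.

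The asymptotics are dictated by the scaling of the three nonlinear terms. Writing $f(r)=r^{-\mu}w(r)$ and using $\mu=p/(2-p)$ one finds $\beta(\mu f+rf')=\beta\,r^{1-\mu}w'$, while the diffusion operator and the absorption $|f'|^{p/2}$ both carry the factor $r^{-\mu}$ times a power of $w$; here the two identities $(\mu+1)(p-1)=\mu-1$ and $\mu/(\mu+1)=p/2$ are decisive. I would analyse the resulting autonomous system in $s=\log r$ near its critical point at infinity. On the separatrix $w'\to0$, convection becomes negligible, and the balance between diffusion and absorption forces $(\mu w)^{p-1}(\mu-N)=(\mu w)^{p/2}$, i.e. $w\to w^*$ with $w^*$ as in \eqref{zw}; this degenerate fast decay occurs precisely at $\beta=\beta_*$, giving (i). For $\beta\in(0,\beta_*)$ the trajectory instead reaches infinity along the centre direction on which convection balances absorption, $\beta\,r\,w'\sim\mu^{p/2}w^{\mu/(\mu+1)}$; since $(\mu+1)K_\infty=\mu^{p/2}/\beta$, this is the reduced equation $rw'\sim(\mu+1)K_\infty w^{\mu/(\mu+1)}$, whose solutions satisfy $w\sim(K_\infty\log r)^{\mu+1}$, which is exactly (ii).

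Granting the profile $f(\cdot;\beta)$, the final assertion is then almost immediate: by construction of the ansatz $U_{\beta,t_0}$ solves \eqref{eq1} on $\real\times\real^N$ in the appropriate weak/viscosity sense and is eternal, being defined for all $t$, and $U_{\beta,t_0}\ge0$ because $f>0$. For integrability the change of variables $y=xe^{-\beta(t+t_0)}$ gives $\|U_{\beta,t_0}(t)\|_{L^1}=e^{(N-\mu)\beta(t+t_0)}\int_{\real^N}f(|y|;\beta)\,dy$, and since $f(r;\beta)\lesssim(\log r)^{\mu+1}r^{-\mu}$ with $\mu>N$ by \eqref{zexp}, the integral $\int_0^\infty f(r;\beta)r^{N-1}\,dr$ converges. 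The main obstacle I anticipate is the simultaneous execution of the shooting in $\beta$ and the asymptotic classification: one must show that the positivity threshold $\beta_*$ coincides with the value at which the trajectory lands on the fast-decay separatrix, which requires both a delicate centre-manifold analysis at the degenerate critical point at infinity (the source of the $(\log r)^{\mu+1}$ correction) and enough monotonicity of the decay in $\beta$ to guarantee that the separatrix is attained exactly once.
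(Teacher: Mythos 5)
Your outline reproduces the paper's architecture in broad strokes---local well-posedness, a shooting classification in $\beta$, a threshold $\beta_*$ obtained as the supremum of the positivity set, and asymptotics read off from the first-order dynamics of $w=r^{\mu}f$---and your two balance computations (diffusion vs.\ absorption giving $w\to w^*$, convection vs.\ absorption giving the $(K_\infty(\b)\log r)^{\mu+1}$ growth) are exactly the correct heuristics behind parts (i) and (ii). However, two steps that you treat as routine are precisely where the substance of the proof lies, and as written they are genuine gaps.

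First, the claim that ``continuous dependence on $\beta$ shows that $\mathcal{G}$ is an interval'' is not correct: connectedness of the positivity set does not follow from continuity alone. The paper derives it from strict monotonicity of $\beta\mapsto w(\cdot;\beta)$ (Proposition~\ref{prop.monot}), and this monotonicity is itself delicate here because the shooting parameter sits in the equation rather than in the initial condition: the linearized equation \eqref{part1} satisfied by $\partial_\beta w$ is nonhomogeneous, and one needs the comparison principle of Lemma~\ref{lemma.comp} together with a Wronskian argument to push the sign of $\partial_\beta w$ up to the maximum point of $w$. Second---and this is the decisive gap, which you yourself flag as ``the main obstacle''---you never prove that the fast-decay behavior (i) occurs for exactly one value of $\beta$. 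This is not a technical refinement: if the set $B$ of parameters with $w\to w^*$ were a nondegenerate interval $[\beta_-,\beta_+]$, your $\beta_*=\sup\mathcal{G}$ would equal $\beta_+$ and statement (ii) would be false for every $\beta\in[\beta_-,\beta_+)$. The paper's resolution is the core of Sections~\ref{seaf}--\ref{seeop}: reduce to the first-order equation \eqref{equation.Phi} for $\Phi(w)=rw'$, show $\Phi(\xi;\beta)\sim K^*(\beta)(w^*-\xi)$ as $\xi\to w^*$ for $\beta\in B$ (Lemma~\ref{lemma.estinfB}), then show $\partial_\beta\Phi(\xi;\beta)\to-\infty$ as $\xi\to w^*$ for $\beta$ interior to $B$ (Lemma~\ref{lemma.estderiv}), and finally obtain a contradiction via Fatou's lemma, since $\Phi(\xi;\beta_1)-\Phi(\xi;\beta_2)\to 0$ as $\xi\to w^*$ while the integral of $|\partial_\beta\Phi|$ over $(\beta_1,\beta_2)$ must blow up. Your proposed ``centre-manifold analysis plus enough monotonicity'' names this difficulty but supplies no mechanism playing the role of the blow-up-plus-Fatou argument; until that is filled in, the theorem---in particular the dichotomy between (i) at $\beta_*$ and (ii) on all of $(0,\beta_*)$---is not established.
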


We actually also prove that, if $\beta>\beta_*$, the initial value problem \eqref{ODE2} has a maximal solution $f(.;\beta)$ which is positive on $[0,R(\beta))$ for some $R(\beta)\in (0,\infty)$, vanishes at $R(\beta)$, and is negative in a right neighborhood of $R(\beta)$. Our study thus shows that, at least for nonnegative self-similar solutions, the temporal decay rate cannot exceed $e^ {-\beta_* t}$, which is in sharp contrast with what is known for \eqref{zpme} with $m=m_c$ and \eqref{zplap} with $p=p_c$.

Let us next point out that \eqref{ODE2} has several unusual features
compared to other ordinary differential equations associated to the
analysis of radially symmetric self-similar solutions for parabolic
equations, see \cite{BPT86, CQW03, IL2, Pe042, VazquezSmoothing} and
the references therein. First, the so-called ``shooting'' parameter
$\beta$ is here in the equation and not in the initial condition as
usual,which generates an additional term and thus additional
difficulties in the study of the variation $\partial_\beta
f(\cdot;\beta)$ of $f(\cdot;\beta)$ with respect to $\beta$. Next,
it is clear from Theorem~\ref{th1} that, though the decay of
$f(\cdot;\beta)$ as $r\to\infty$ is slower for $\beta\in
(0,\beta_*)$ than for $\beta=\beta_*$, the algebraic leading order
$r^ {-\mu}$ is the same and this tiny difference involving only a
logarithmic term complicates the analysis and requires finer
techniques. Indeed, in the aforementioned references, the fast
decaying orbit and the slow decaying orbits have different algebraic
rates.

An interesting byproduct of our analysis is that the self-similar solutions we construct in Theorem~\ref{th1} are actually \emph{eternal solutions}, that is, solutions defined for all times $t\in\real$. Since parabolic equations enjoy smoothing effects, the availability of such solutions is a rather casual phenomenon for such equations and might be observed only for very specific equations. In particular, for the two basic nonlinear diffusion equations \eqref{zpme} and \eqref{zplap}, there exist explicit one-parameter families of \emph{eternal solutions} of self-similar exponential type only when $m=m_c$ \cite{VazquezSmoothing} and $p=p_c$ \cite{ISV}, respectively. Also, if $N=2$, \emph{eternal solutions} are available for the logarithmic diffusion equation $\partial_t u - \Delta\log{u} = 0$ in $(0,\infty)\times \real^2$ which is related to the two-dimensional Ricci flow \cite{DS06}.

\medskip

Let us now describe the strategy of the proof of Theorem~\ref{th1}. Section~\ref{sebpf} is devoted to the local well-posedness of \eqref{ODE2} along with properties of the solution $f(\cdot;\beta)$ including a fine analysis of the behavior as $r\to 0$. In Section~\ref{semf}, we investigate the monotonicity properties of $r\mapsto r^{-\mu} f(\cdot;\beta)$ and divide the range of $\beta$  into three disjoint subsets $A$, $B$, and $C$ according to the expected behavior of $f(\cdot;\beta)$. In particular, global positive solutions to \eqref{ODE2} correspond to $\beta\in B\cup C$. With the aim of proving Theorem~\ref{th1}, a refined study of the sets $B$ and $C$ is required and relies on an intricate change of both variable and unknown function which is performed in Section~\ref{seaf} and allows us to reduce \eqref{ODE2} to a first-order differential equation. A careful study of this new equation then gives the precise behavior of $f(\cdot;\beta)$ as $r\to\infty$ by a delicate construction of suitable subsolutions and supersolutions. Of course, it depends upon whether $\beta$ belongs to $C$ (Section~\ref{seabC}) or $B$ (Section~\ref{seabB}). The latter enables us to show that $B$ is reduced to a single point.

\section{Basic properties of $f(\cdot;\b)$}\label{sebpf}

Fix $\beta>0$. Introducing $g:=-|f'|^{p-2}f'$, we observe
that \eqref{ODE2} also reads
\begin{equation}\label{ODE3}
\left\{\begin{array}{l}
f'(r)=-(|g|^{(2-p)/(p-1)}g)(r),\\
 \\
\displaystyle{ g'(r)+\frac{N-1}{r}g(r)} = \b(\mu
f(r)-r(|g|^{(2-p)/(p-1)}g)(r))-|g(r)|^{p/2(p-1)},\\
 \\
f(0)=1, \ g(0)=0.
\end{array}\right.
\end{equation}
Since $p\in(1,2)$, we have $p/2(p-1)>1$ and
$1+(2-p)/(p-1)=1/(p-1)>0$, and there is a unique maximal solution
$(f(\cdot;\b),g(\cdot;\b))$ to \eqref{ODE3}, which is $C^1$-smooth.

Let us define
\begin{equation*}
R(\b):=\inf\{r>0: \ f(r;\b)=0\}>0,
\end{equation*}
the positivity of $R(\b)$ being a straightforward consequence of the
continuity of $f(\cdot;\b)$. We begin with some basic properties of
$f(\cdot;\b)$. In the proofs of the following results we write
$f(r)=f(r;\b)$ and $g(r)=g(r;\beta)$, omitting the dependence on $\b$ to lighten notation.

\begin{lemma}\label{lemma.prop}
Let $\b>0$. We have $-(\b\mu)^{2/p}\leq f'(r;\b)<0$ for any
$r\in(0,R(\b))$. Moreover, if $R(\b)=\infty$, then
$$
\lim\limits_{r\to\infty}f(r;\b)=\lim\limits_{r\to\infty}f'(r;\b)=0.
$$
\end{lemma}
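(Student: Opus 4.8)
The plan is to prove Lemma~\ref{lemma.prop} in two stages: first establish the two-sided bound on $f'$ on the maximal positivity interval $(0,R(\b))$, and then extract the limiting behavior at infinity in the case $R(\b)=\infty$.

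\textbf{Stage 1: the bounds on $f'$.} First I would argue that $f'<0$ on $(0,R(\b))$. Near $r=0$ we have $f(0)=1$ and $g(0)=0$, and from \eqref{ODE3} the equation for $g$ at $r=0$ gives $g'(0)=\b\mu f(0)=\b\mu>0$ (after handling the singular $(N-1)g/r$ term, e.g.\ by multiplying by $r^{N-1}$ so that $(r^{N-1}g)'$ has a clean sign near the origin). Hence $g$ becomes positive immediately, forcing $f'=-|g|^{(2-p)/(p-1)}g<0$ just to the right of $0$. To see that $f'$ cannot return to $0$ before $R(\b)$, I would study the function $(r^{N-1}g)'=r^{N-1}\bigl[\b(\mu f - r|f'|^{p/2\cdot\ldots}) - |g|^{p/2(p-1)}\bigr]$ and show that at any putative first zero of $g$ (equivalently of $f'$) in $(0,R(\b))$, where $f$ is still positive, the right-hand side keeps $g$ from crossing back to negative values; the key sign input is that $\mu f(r)>0$ on this interval. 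The upper bound $f'\geq -(\b\mu)^{2/p}$ is the more quantitative part: I would look for the point (if any) where $|f'|$ is maximal, i.e.\ where $g$ is maximal and $g'=0$. At such an interior maximum of $g$ the equation reads $(N-1)g/r + |g|^{p/2(p-1)} = \b(\mu f - rf'|g|^{\ldots})$; using $f\leq 1$, $f'<0$ (so the drift term has a favorable sign), and discarding the nonnegative term $(N-1)g/r$, one obtains $|g|^{p/2(p-1)}\leq \b\mu$, which after converting back through $f'=-|g|^{(2-p)/(p-1)}g$ and simplifying the exponents yields exactly $|f'|\leq(\b\mu)^{2/p}$. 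I would make this rigorous by a maximum-principle / comparison argument rather than assuming the maximum is attained, treating the possibility that $|f'|$ is monotone as well.

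\textbf{Stage 2: the limits at infinity.} Assume now $R(\b)=\infty$, so $f>0$ everywhere. By Stage~1, $f$ is nonincreasing and bounded below by $0$, hence $f(r)$ has a finite nonnegative limit $\ell:=\lim_{r\to\infty}f(r)\geq 0$, and $\int_0^\infty |f'(r)|\,dr = 1-\ell<\infty$ converges. From $f'=-|g|^{(2-p)/(p-1)}g$ with $g\geq 0$, the integrability of $|f'|$ together with the uniform bound $|f'|\leq(\b\mu)^{2/p}$ gives $\liminf_{r\to\infty}|f'(r)|=0$, and I would upgrade this to $\lim_{r\to\infty}f'(r)=0$ by controlling $f''$ (equivalently $g'$) via the equation: the bounded coefficients and the decay of the lower-order terms prevent $f'$ from oscillating away from $0$. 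To conclude $\ell=0$, I would argue by contradiction: if $\ell>0$, then for large $r$ the term $\b\mu f(r)$ in \eqref{ODE2} stays bounded below by $\b\mu\ell/2>0$, while $(|f'|^{p-2}f')'$, $(N-1)/r\,|f'|^{p-2}f'$, $\b r f'(r)$, and $|f'|^{p/2}$ must together cancel it; but $f'(r)\to0$ makes the flux terms and the absorption term tend to $0$, and the remaining balance $\b r f'(r)\approx -\b\mu\ell<0$ would force $\int^\infty f'(r)\,dr=-\infty$, contradicting the convergence of $f$. This forces $\ell=0$, i.e.\ $\lim_{r\to\infty}f(r)=0$.

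\textbf{Main obstacle.} I expect the delicate point to be the rigorous derivation of the sharp constant $(\b\mu)^{2/p}$ in the bound on $f'$, since one cannot simply evaluate at a maximum without first knowing such a maximum exists; the clean way is to show that $g$ (hence $|f'|$) cannot exceed the level $(\b\mu)^{(p-1)\cdot 2/p}$ at which $g'$ is forced nonpositive, via a barrier/invariant-region argument using the sign of $(r^{N-1}g)'$ and the inequality $f\leq1$. Getting the exponent bookkeeping between $g$ and $f'$ exactly right, and ensuring the argument is valid both when a maximum is attained and when $|f'|$ approaches its supremum only asymptotically, is where the care is needed.
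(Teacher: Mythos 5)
Your Stage~1 is essentially the paper's own argument and is sound: the paper also rules out a first interior zero $r_0$ of $g=-|f'|^{p-2}f'$ by noting $g'(r_0)=\beta\mu f(r_0)>0$ while $g>0$ to the left forces $g'(r_0)\le 0$, and it obtains the bound $(\beta\mu)^{2/p}$ by evaluating \eqref{ODE2} at a minimum point $r_m$ of $f'$ on an arbitrary compact $[0,R]\subset[0,R(\beta))$ (interior: $f''(r_m)=0$; endpoint $r_m=R$: $f''(r_m)\le 0$), which is exactly the attainment fix you ask for; your invariant-region variant works equally well. (Minor slip: $g'(0)=\beta\mu/N$, not $\beta\mu$, because the singular term $(N-1)g/r$ contributes $(N-1)g'(0)$ at the origin; only the sign matters.)

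Stage~2, however, has a genuine gap, precisely at the two places where the paper's energy function does the work. First, your upgrade from $\liminf|f'|=0$ to $\lim f'=0$ invokes ``bounded coefficients'' of the equation, but the equation for $g'$ contains the term $\beta r f'(r)$ whose coefficient $r$ is unbounded, and no decay of $rf'$ is known at this stage; so neither $f''$ nor $g'$ is bounded and uniform continuity of $f'$ is not available. The step can be rescued, but by a one-sided observation you do not make: in \eqref{ODE3} every term on the right except $\beta\mu f$ is nonpositive (since $g\ge 0$ and $f'\le 0$), whence $g'\le \beta\mu f\le\beta\mu$; an upper bound on the rate of \emph{increase} of $g=|f'|^{p-1}$ is enough to rule out spikes (a spike of $g$ of height $c$ must have width at least $c/(2\beta\mu)$, contradicting $f'\in L^1$). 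Second, your concluding balance argument for $\ell=0$ tacitly assumes that the flux term $(|f'|^{p-2}f')'=-g'$ tends to $0$, which is not established and does not follow from $f'\to 0$. The clean repair is to integrate \eqref{ODE4} over $[r,2r]$: the boundary terms are $O(r^{N-1}\sup_{[r,\infty)}|f'|^{p-1})=o(r^{N-1})$, the contributions of $|f'|^{p/2}$ and of $\beta s f'$ are $o(r^N)$ (the latter because $\int_r^{2r}|f'|\,ds=f(r)-f(2r)\to 0$), while $\beta\mu\int_r^{2r}s^{N-1}f\,ds\ge c\ell r^N$, a contradiction when $\ell>0$. By contrast, the paper introduces $E=\frac{p-1}{p}|f'|^p+\frac{\mu\beta}{2}f^2$ in \eqref{energy}, gets $E'\le -|f'|^{(p+2)/2}\le 0$ from \eqref{energderiv}, and thus obtains simultaneously that $|f'|$ \emph{converges} (since $E$ and $f$ do) and that $f'\in L^{(p+2)/2}(0,\infty)$, which forces $f'\to 0$ with no oscillation analysis at all; you would do well to adopt this device or to incorporate the two repairs above explicitly.
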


\begin{proof}
Let $g=-|f'|^{p-2}f'$. From \eqref{ODE3}, it follows that $g(0)=0$
and $g'(0)=\b\mu/N>0$, hence there is $\delta>0$ such that $f'(r)<0$
for $r\in(0,\delta)$. Set $r_0:=\inf\{r\in(0,R(\b)):f'(r)=0\}$
and assume for contradiction that $r_0<R(\b)$. Then, on the one hand,
$g(r_0)=f'(r_0)=0$ and we deduce from \eqref{ODE3} that
$g'(r_0)=\b\mu f(r_0)>0$. On the other hand, $g(r)>0=g(r_0)$ for
$r\in(0,r_0)$, whence $g'(r_0)\leq0$, which is a contradiction.
Consequently, $r_0\geq R(\b)$ and $f'<0$ in $(0,R(\b))$.

Consider next $R\in(0,R(\b))$ and let $r_m$ be a point of minimum of
$f'$ in $[0,R]$. Clearly, $r_m\ne 0$ and either $r_m\in(0,R)$ and
$f''(r_m)=0$ or $r_m=R$ and $f''(r_m)\leq0$. In both cases it
follows from \eqref{ODE2} and the negativity of $f'$ that $\b\mu
f(r_m)\geq|f'(r_m)|^{p/2}$. Consequently, if $r\in[0,R]$,
\begin{equation*}
|f'(r)|\leq |f'(r_m)|\leq(\b\mu f(r_m))^{2/p}\leq(\b\mu
f(0))^{2/p}=(\b\mu)^{2/p}.
\end{equation*}
Since $R\in (0,R(\beta))$ is arbitrary, we conclude that $|f'(r)|\le (\b\mu)^{2/p}$ for $r\in (0,R(\beta))$.
Finally, if $R(\b)=\infty$, we define the following ``energy"
\begin{equation}\label{energy}
E(r):=\frac{p-1}{p}|f'(r)|^{p}+\frac{\mu\b}{2}f(r)^2, \quad r>0.
\end{equation}
Then, owing to \eqref{ODE2} and the negativity of $f'$, we have
\begin{equation}\label{energderiv}
E'(r)=-\frac{N-1}{r}|f'(r)|^p-\b r|f'(r)|^2-|f'(r)|^{(p+2)/2}\leq0.
\end{equation}
Then $f$ and $E$ are two nonnegative and nonincreasing functions, so
that there exist $l\ge 0$ and $l_{E}\ge0$ such that $f(r)\to l$ and $E(r)\to
l_E$ as $r\to\infty$. On the one hand, it follows from
\eqref{energy} that $f'(r)$ has also a limit $l'$ as $r\to\infty$.
On the other hand, \eqref{energderiv} ensures that $f'$ belongs to
$L^{(p+2)/2}(0,\infty)$. Combining these two facts implies that
$l'=0$, from which we also deduce that $g(r)\to0$ as $r\to\infty$.
We then infer from \eqref{ODE3} that $g'(r)\to\mu\b l$ as
$r\to\infty$, which implies that $l=0$ since $g(r)\to 0$ as $r\to\infty$.
\end{proof}

For further use, we need to analyze in detail the behavior of $f(\cdot;\b)$ near $r=0$.

\begin{lemma}\label{lemma.exp1}
For $\b>0$, we have
\begin{equation}\label{expan}
\begin{split}
f(r;\b)&=1-C_1\left(\frac{\b\mu}{N}\right)^{1/(p-1)}r^{p/(p-1)}+C_2\left(\frac{\b\mu}{N}\right)^{(4-p)/2(p-1)}r^{3p/2(p-1)}\\
&+C_3(\b-B_1)\left(\frac{\b\mu}{N}\right)^{(3-p)/(p-1)}r^{2p/(p-1)}+o(r^{2p/(p-1)})
\end{split}
\end{equation}
as $r\to 0$, where
\begin{equation*}
C_1:=\frac{p-1}{p}, \quad C_2:=\frac{4(p-1)}{3p((2N+1)p-2N)}, \quad
C_3:=\frac{p-1}{2p(2-p)(p+N(p-1))},
\end{equation*}
and $B_1$ is defined in \eqref{zz1} below.
\end{lemma}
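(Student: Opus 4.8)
The plan is to exploit the first-order formulation \eqref{ODE3} rather than \eqref{ODE2} directly, and to build the expansion \eqref{expan} by a bootstrap on an integral equation. By Lemma~\ref{lemma.prop} we have $f'<0$ on a right neighbourhood of $0$, so $g=(-f')^{p-1}>0$ and $f'=-g^{1/(p-1)}$ there. Writing the $g$-equation in \eqref{ODE3} in divergence form, $(r^{N-1}g)'=r^{N-1}\big(\b\mu f-\b r\,g^{1/(p-1)}-g^{p/(2(p-1))}\big)$, and using $g(0)=0$, $f(0)=1$, yields the coupled integral equations
\begin{equation*}
g(r)=r^{1-N}\int_{0}^{r}\rho^{N-1}\Big(\b\mu f(\rho)-\b\rho\, g(\rho)^{1/(p-1)}-g(\rho)^{p/(2(p-1))}\Big)\,d\rho,\qquad f(r)=1-\int_{0}^{r} g(\rho)^{1/(p-1)}\,d\rho .
\end{equation*}
Since $f\to1$ and $g\to0$ as $r\to0$, the bracket converges to $\b\mu$, so the dominant balance is $(r^{N-1}g)'\sim\b\mu\,r^{N-1}$, giving $g(r)=(\b\mu/N)\,r\,(1+o(1))$; inserting this into the second equation produces the first term of \eqref{expan}, since $C_{1}=(p-1)/p$ is the reciprocal of $s:=p/(p-1)$.

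I would then iterate. The key structural fact is that every power generated lies among $s,\,3s/2,\,2s,\dots$: writing $g=(\b\mu/N)\,r\,(1+u)$ with $u=o(1)$, the correction $u$ carries powers $r^{s/2},r^{s},\dots$, so $g^{1/(p-1)}=(\b\mu/N)^{1/(p-1)}r^{s-1}(1+u)^{1/(p-1)}$ (using $(s-1)(p-1)=1$) carries powers $r^{s-1},r^{3s/2-1},r^{2s-1},\dots$, and hence $f-1$ carries exactly $r^{s},r^{3s/2},r^{2s},\dots$; since $s>2$ we have $1<s/2<s$, so $u\to0$ and the three displayed terms are the first three corrections. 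The $r^{3s/2}$ term is produced by the absorption term alone: $g^{p/(2(p-1))}=g^{s/2}\sim(\b\mu/N)^{s/2}r^{s/2}$ is the first contribution to the bracket beyond the constant $\b\mu$, so after the two quadratures it yields the $r^{1+s/2}$ correction of $g$ and then the term $C_{2}(\b\mu/N)^{(4-p)/(2(p-1))}r^{3s/2}$ of \eqref{expan}; matching the power of $\b\mu/N$ uses $\tfrac1{p-1}+\tfrac s2-1=\tfrac{4-p}{2(p-1)}$, and $C_{2}$ is the product of the elementary factors from the two quadratures and from the linearization of $\xi\mapsto\xi^{1/(p-1)}$.

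The delicate step, and the main obstacle, is the $r^{2s}$ term, which carries the factor $\b-B_{1}$ and thereby defines $B_{1}$. Several contributions of order $r^{2s-1}$ now enter $g^{1/(p-1)}$ and must be collected with their correct powers of $\b$. The two terms $\b\mu f$ and $-\b r\,g^{1/(p-1)}$ carry an explicit factor $\b$ and feed the $r^{1+s}$ correction of $g$; they generate the summand $C_{3}\b(\b\mu/N)^{(3-p)/(p-1)}$, of $\b$-power $2/(p-1)$. The absorption term $g^{s/2}$, though first appearing at the lower order $r^{s/2}$, feeds back at order $r^{2s-1}$ in two ways---through its own second-order correction inside the $r^{1+s}$ coefficient of $g$, and through the quadratic term of the binomial expansion of $(1+u)^{1/(p-1)}$ applied to the $r^{1+s/2}$ correction---and these generate the $\b$-free multiple $-C_{3}B_{1}(\b\mu/N)^{(3-p)/(p-1)}$, of $\b$-power $(3-p)/(p-1)$. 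Summing gives the stated coefficient. Throughout, the expansions of $\xi\mapsto\xi^{1/(p-1)}$ and $\xi\mapsto\xi^{s/2}$ are licit because Lemma~\ref{lemma.prop} confines $g(r)/r$ to a compact subinterval of $(0,\infty)$ near $0$, and the error terms propagate through the two quadratures to close the expansion at order $o(r^{2p/(p-1)})$. The difficulty is genuinely this last bookkeeping: the non-integer exponents preclude a direct Taylor or analytic argument, and the easily-overlooked quadratic feedback of the absorption-generated correction must be tracked alongside the two linear $\b$-contributions.
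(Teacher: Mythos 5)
Your proposal is correct and takes essentially the same route as the paper: your integral equation for $g$ is exactly the paper's divergence form \eqref{ODE4} written for $g=-|f'|^{p-2}f'$, and both arguments run the same bootstrap (insert the current expansion, integrate, take the $1/(p-1)$ power, integrate again, repeat to third order). Your bookkeeping of the $r^{2p/(p-1)}$ coefficient --- the explicit-$\beta$ terms from $\beta\mu f$ and $-\beta r g^{1/(p-1)}$ producing $C_3\beta$, and the two absorption feedbacks (the linear correction inside the $r^{1+s}$ coefficient and the quadratic term of the binomial expansion) producing $-C_3B_1$ --- matches quantitatively the paper's computation that yields $B_0$ and then $B_1$ in \eqref{zz1}.
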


\begin{proof}
Since $(|f'|^{p-2}f')'(0)=-\mu\b/N$, we have that
$(|f'|^{p-2}f')(r)=-\mu\b r/N+o(r)$ as $r\to 0$, hence, owing to the nonnegativity of $f'$,
\begin{equation}\label{expan1}
f'(r)=-\left(\frac{\mu\b r}{N}\right)^{1/(p-1)}+o(r^{1/(p-1)})
\end{equation}
and
\begin{equation}\label{expan2}
f(r)=1-\frac{p-1}{p}\left(\frac{\mu\b}{N}\right)^{1/(p-1)}r^{p/(p-1)}+o(r^{p/(p-1)}),
\end{equation}
in a first order approximation. Since \eqref{ODE2} also reads
\begin{equation}\label{ODE4}
\frac{d}{dr}\left[ r^{N-1}(|f'|^{p-2}f')(r) \right] = r^{N-1}\left(|f'(r)|^{p/2}-\b(rf'(r)+\mu
f(r))\right),
\end{equation}
we infer from \eqref{expan1} and \eqref{expan2} that, as $r\to0$,
\begin{equation*}
\frac{1}{r^{N-1}}\frac{d}{dr}\left[ r^{N-1}(|f'|^{p-2}f')(r)\right]=\left(\frac{\mu\b
r}{N}\right)^{p/2(p-1)}-\b\mu+o(r^{p/2(p-1)}).
\end{equation*}
Integrating once, we find
$$
|f'(r)|^{p-1}=\frac{\b\mu}{N}r-\frac{2(p-1)}{p(2N+1)-2N}\left(\frac{\mu\b}{N}\right)^{p/2(p-1)}r^{(3p-2)/2(p-1)}+o(r^{(3p-2)/2(p-1)}),
$$
whence
\begin{equation}\label{expan3}
\begin{split}
f'(r)&=-\left(\frac{\mu\b}{N}\right)^{1/(p-1)}r^{1/(p-1)}+\frac{2}{p(2N+1)-2N}\left(\frac{\mu\b}{N}\right)^{(4-p)/2(p-1)}r^{(p+2)/2(p-1)}\\&+o(r^{(p+2)/2(p-1)}).
\end{split}
\end{equation}
Integrating once more gives the second order approximation as $r\to 0$:
\begin{equation}\label{expan4}
\begin{split}
f(r)&=1-\frac{p-1}{p}\left(\frac{\mu\b}{N}\right)^{1/(p-1)}r^{p/(p-1)}\\
&+\frac{4(p-1)}{3p(p(2N+1)-2N)}\left(\frac{\mu\b}{N}\right)^{(4-p)/2(p-1)}r^{3p/2(p-1)} +o(r^{3p/2(p-1)}).
\end{split}
\end{equation}
We then repeat the same technical step, inserting \eqref{expan3} and
\eqref{expan4} into \eqref{ODE4} in order to get the third order
approximation. Skipping straightforward computations, we arrive at
\begin{equation*}
\begin{split}
\frac{d}{dr}\left(r^{N-1}|f'(r)|^{p-1}\right)&=\b\mu
r^{N-1}-\left(\frac{\mu\b}{N}\right)^{p/2(p-1)}r^{(p/2(p-1))+N-1}\\&-\left(\frac{\mu\b}{N}\right)^{1/(p-1)}\frac{\b-B_0}{2-p}r^{(p/(p-1))+N-1}+o(r^{(p/(p-1))+N-1}),
\end{split}
\end{equation*}
where $B_0:=p(2-p)/(p(2N+1)-2N)$. After integration, we obtain the expansion of $f'$ as $r\to 0$,
\begin{equation}\label{expan5}
\begin{split}
f'(r)&=-\left(\frac{\mu\b}{N}\right)^{1/(p-1)}r^{1/(p-1)}+\frac{2}{p(2N+1)-2N}\left(\frac{\mu\b}{N}\right)^{(4-p)/2(p-1)}r^{(p+2)/2(p-1)}\\
&+\left[\frac{\b-B_0}{(2-p)(p+N(p-1))}-\frac{2 B_0^2}{p^2(2-p)}\right]\left(\frac{\mu\b}{N}\right)^{(3-p)/(p-1)}r^{(p+1)/(p-1)}\\&+o(r^{(p+1)/(p-1)}).
\end{split}
\end{equation}
Setting
\begin{equation}
B_1:=B_0+\frac{2(p+N(p-1))}{p^2}\ B_0^2\,, \quad B_0 = \frac{p(2-p)}{p(2N+1)-2N}\,, \label{zz1}
\end{equation}
one more integration of \eqref{expan5} gives \eqref{expan} with
the claimed constants $C_1$, $C_2$, and $C_3$.
\end{proof}

We will also use the expansion of $\partial_{\b}f(r;\b)$ as $r\to 0$ which we state now.

\begin{lemma}\label{lemma.exp2}
For $\b>0$, we have
\begin{equation}\label{expan.beta}
\begin{array}{l}
\displaystyle{\partial_{\b}f(r;\b)=-\frac{1}{p}\left(\frac{\mu}{N}\right)^{1/(p-1)}\ \b^{(2-p)/(p-1)}\ r^{p/(p-1)} + o(r^{p/(p-1)})}\,, \\
 \\
\displaystyle{\partial_\beta f'(r;\beta) = - \frac{1}{p-1}\ \left( \frac{\mu}{N} \right)^{1/(p-1)}\ \b^{(2-p)/(p-1)}\ r^{1/(p-1)} + o(r^{1/(p-1)})}\,,
\end{array}
\end{equation}
as $r\to 0$.
\end{lemma}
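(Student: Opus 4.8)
The plan is to differentiate the initial value problem with respect to the shooting parameter $\b$ and to run on the resulting linear problem the same near-origin bootstrap that produced Lemma~\ref{lemma.exp1}, working throughout with the pair $(f,g)$ of \eqref{ODE3}, since $g=|f'|^{p-1}$ vanishes only linearly at $r=0$ and all the nonlinearities become differentiable as soon as $g>0$.

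I would first justify the existence and regularity of the derivatives. On $(0,R(\b))$ we have $g=|f'|^{p-1}>0$ by Lemma~\ref{lemma.prop}, so \eqref{ODE3} is a smooth non-singular system there, and classical results on the differentiable dependence on parameters yield that $\partial_\b f$ and $\partial_\b g$ exist and are continuous on $(0,R(\b))$. The delicate point, which is the main obstacle of the proof, is to propagate this continuity up to the degenerate endpoint $r=0$, where the diffusion term $\tfrac{N-1}{r}g$ is singular and the absorption terms fail to be differentiable at $g=0$, and to establish that
\[
\lim_{r\to0}\partial_\b f(r;\b)=\lim_{r\to0}\partial_\b g(r;\b)=0,
\]
consistently with $f(0)=1$ and $g(0)=0$ being independent of $\b$. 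I would obtain this from the integral formulation used for the local well-posedness of \eqref{ODE3}, exploiting that the expansions of Lemma~\ref{lemma.exp1} hold locally uniformly in $\b$; the remaining steps are then essentially algebraic.

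Granting this, set $w:=\partial_\b f$ and $G:=\partial_\b g$. Integrating \eqref{ODE4} and recalling $g=-|f'|^{p-2}f'$, $f'=-g^{1/(p-1)}$ and $|f'|^{p/2}=g^{p/(2(p-1))}$ gives
\[
r^{N-1}g(r)=\int_0^r s^{N-1}\Big[\b\mu f(s)-\b s\,g(s)^{1/(p-1)}-g(s)^{p/(2(p-1))}\Big]\ds.
\]
Differentiating with respect to $\b$, which is legitimate because $g>0$ on $(0,r]$ and the integrand and its $\b$-derivative are bounded near $s=0$, yields
\[
\begin{aligned}
r^{N-1}G(r)=\int_0^r s^{N-1}\Big[&\mu f+\b\mu w-s\,g^{1/(p-1)}\\
&-\tfrac{\b s}{p-1}g^{(2-p)/(p-1)}G-\tfrac{p}{2(p-1)}g^{(2-p)/(2(p-1))}G\Big]\ds.
\end{aligned}
\]
By Lemma~\ref{lemma.exp1} one has $g(s)\sim\b\mu s/N$ as $s\to0$, so each of the three last terms in the bracket carries a factor that tends to $0$ while $G$ stays bounded near $0$; since moreover $f\to1$ and $w\to0$, the bracket converges to $\mu$. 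Consequently $r^{N-1}G(r)=\tfrac{\mu}{N}r^{N}(1+o(1))$, that is $G(r)\sim\tfrac{\mu}{N}\,r$ as $r\to0$.

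It then remains to convert this into the stated expansions. Differentiating $f'=-g^{1/(p-1)}$ with respect to $\b$ gives
\[
\partial_\b f'(r;\b)=w'(r)=-\frac{1}{p-1}\,g(r)^{(2-p)/(p-1)}\,G(r),
\]
and inserting $g(r)\sim\b\mu r/N$ together with $G(r)\sim\mu r/N$ produces the leading term of $\partial_\b f'$ in \eqref{expan.beta}, once the coefficient is simplified through $\tfrac{\mu}{N}\big(\tfrac{\b\mu}{N}\big)^{(2-p)/(p-1)}=\big(\tfrac{\mu}{N}\big)^{1/(p-1)}\b^{(2-p)/(p-1)}$ and $1+\tfrac{2-p}{p-1}=\tfrac{1}{p-1}$. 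A final integration from $r=0$, using $w(0)=0$ and $\tfrac{1}{p-1}+1=\tfrac{p}{p-1}$, gives the expansion of $\partial_\b f$. This first-order information suffices for the two expansions \eqref{expan.beta}, and the very same bootstrap, fed with the higher-order terms of Lemma~\ref{lemma.exp1}, would produce further terms if needed.
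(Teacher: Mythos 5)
Your proof is correct and takes essentially the same route as the paper: the paper's own (deliberately terse) proof likewise differentiates \eqref{ODE4} with respect to $\b$ and runs the same near-origin bootstrap as in Lemma~\ref{lemma.exp1}, deferring the delicate justification of differentiability in $\b$ up to $r=0$ to \cite[Lemma~2.2]{IL2}, exactly as you defer it to the integral formulation underlying local well-posedness. Your explicit computations (the differentiated integral identity, the bracket tending to $\mu$ giving $\partial_\b g(r;\b)\sim \mu r/N$, and the conversion to the two expansions in \eqref{expan.beta}) are all correct.
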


Formally, we obtain the expansions \eqref{expan.beta} by
differentiating with respect to $\b$ in \eqref{expan}. The rigorous
proof starts from differentiating with respect to $\b$ in
\eqref{ODE4} and follow the same steps as the proof of Lemma~\ref{lemma.exp1}. We omit the details and refer to \cite[Lemma~2.2]{IL2} where a similar result is proved.

At the end of this section, we apply the gradient estimates proved
in \cite[Theorem 1.3]{IL1}, to relate the growth of $f(\cdot;\beta)$ and $f'(\cdot;\beta)$.

\begin{lemma}\label{lemma.grad}
Let $\b>0$ such that $R(\b)=\infty$. Then $f(\cdot;\b)$ satisfies
\begin{equation}\label{gradest}
|f'(r;\b)|\leq C_4 f(r;\b)^{2/p}, \qquad r\geq0,
\end{equation}
for some constant $C_4>0$ depending only on $N$ and $p$.
\end{lemma}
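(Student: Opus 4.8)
The plan is to reinterpret the profile $f(\cdot;\beta)$ as the self-similar profile of a genuine \emph{eternal} solution of \eqref{eq1} and to apply the intrinsic gradient estimate of \cite[Theorem~1.3]{IL1} to that solution. First I would check that, when $R(\beta)=\infty$, the profile is admissible: by Lemma~\ref{lemma.prop} the function $f(\cdot;\beta)$ is positive, bounded by $f(0)=1$, and satisfies $f(r;\beta)\to 0$ and $f'(r;\beta)\to 0$ as $r\to\infty$. Consequently, reversing the derivation that leads from the ansatz \eqref{self3} and the constraint \eqref{zab} to \eqref{ODE2}, the function
\[
u(t,x):=e^{-\mu\beta(t+t_0)}\,f\big(|x|e^{-\beta(t+t_0)};\beta\big)
\]
is, for each $t_0\in\real$, a nonnegative bounded solution of \eqref{eq1} (defined for all $t\in\real$), with $\|u(t)\|_\infty=e^{-\mu\beta(t+t_0)}$. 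This places $u$ in the class of solutions covered by the gradient estimate of \cite[Theorem~1.3]{IL1}; note that the hypothesis $R(\beta)=\infty$ is exactly what guarantees $u>0$ everywhere, so that $f(\cdot;\beta)^{2/p}$ is well defined.

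Next I would invoke that estimate, which for the critical equation \eqref{eq1} takes the scale-invariant form $|\nabla u(t,x)|\le C(N,p)\,u(t,x)^{2/p}$ with a constant depending only on $N$ and $p$. The reason the constant is independent of $\beta$ and $t_0$ is the scaling symmetry $u\mapsto \sigma^\mu u(t,\sigma\,\cdot)$ of \eqref{eq1}: under it both sides of the estimate transform by the same factor, precisely because
\[
\mu+1=\frac{p}{2-p}+1=\frac{2}{2-p}=\frac{2\mu}{p}.
\]

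The final step is a direct translation back to the profile. Writing $s=t+t_0$ and $r=|x|e^{-\beta s}$, a computation gives
\[
|\nabla u(t,x)|=e^{-(\mu+1)\beta s}\,|f'(r;\beta)|,\qquad u(t,x)^{2/p}=e^{-2\mu\beta s/p}\,f(r;\beta)^{2/p}.
\]
Since $\mu+1=2\mu/p$, the two exponential prefactors coincide and cancel in the gradient estimate, leaving $|f'(r;\beta)|\le C(N,p)\,f(r;\beta)^{2/p}$. Fixing any admissible time $t$ and letting $|x|$ run over $[0,\infty)$, the variable $r=|x|e^{-\beta s}$ sweeps all of $[0,\infty)$, so the inequality holds for every $r\ge 0$, which is exactly \eqref{gradest} with $C_4=C(N,p)$.

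The main obstacle is to confirm that the hypotheses of \cite[Theorem~1.3]{IL1} are genuinely met by $u$ — that it is a solution in the precise (weak or viscosity) sense used there and enjoys the decay required for the bound — and to pin down the exact form of that estimate. If the version in \cite{IL1} carries an additional prefactor that is only active for $t\ge t_\star$, one exploits the eternal character of $u$: applying the bound on the single time slice $t=t_\star$ already yields all $r\ge 0$ (as above), and the freedom to translate $t_0$, together with the scaling symmetry, lets one absorb any surviving factor into a constant depending only on $N$ and $p$, thereby preserving the claimed dependence of $C_4$.
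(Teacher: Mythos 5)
Your proposal is correct and follows essentially the same route as the paper: both construct the exponential self-similar solution $u(t,x)=e^{-\mu\beta t}f(|x|e^{-\beta t};\beta)$, verify it is an admissible (viscosity, in the sense of \cite[Definition~6.1]{IL1}) solution of \eqref{eq1} with Lipschitz data via Lemma~\ref{lemma.prop}, apply the gradient estimate of \cite[Theorem~1.3]{IL1}, and use the identity $\mu+1=2\mu/p$ to cancel the exponential prefactors before fixing a time slice ($t=1$ in the paper) to absorb the $(1+t^{-1/p})$ factor into a constant depending only on $N$ and $p$. Even the caveat you raise about the time-dependent prefactor, and its resolution, matches exactly what the paper does.
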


\begin{proof}
As in \cite[Lemma 2.3]{IL2}, it is easy to check that the function
$$
u(t,x)=e^{-\mu\b t}f(|x|e^{-\b t};\b), \quad
(t,x)\in[0,\infty)\times\real^N,
$$
is a viscosity solution to \eqref{eq1} in the sense of \cite[Definition 6.1]{IL1} with initial condition $x\mapsto f(|x|;\b)$ belonging to $W^{1,\infty}(\real^N)$ due to Lemma~\ref{lemma.prop}. Recall that, owing to the singular diffusion, the classical definition of viscosity solution cannot be used and has to be adapted, see \cite{IS,OS}. We can then apply the gradient estimates in \cite{IL1} and deduce from \cite[Theorem 1.3, (ii)]{IL1} that there exists a positive constant $C_4$ depending only on $N$ and $p$
such that
$$
\left| \nabla u^{(p-2)/p}(t,x) \right| \leq \frac{(2-p) C_4}{p}\ (1+t^{-1/p}), \quad (t,x)\in(0,\infty)\times\real^N.
$$
Expressing this estimate in terms of $f(\cdot;\b)$ we obtain
$$
e^{-(\mu+1)\b t}|f'(r;\b)|\leq C_4\ e^{-2\mu\b
t/p}f(r;\b)^{2/p}(1+t^{-1/p}), \quad
(t,r)\in(0,\infty)\times [0,\infty).
$$
Taking into account that $2\mu\b/p=(\mu+1)\b$ and setting $t=1$, we obtain
\eqref{gradest}.
\end{proof}

\section{Monotonicity of $r\mapsto r^{-\mu}f(r;\b)$}\label{semf}

Following a technique already used in previous papers \cite{CQW03,
Pe042, IL2}, we next introduce the function $w$ defined by
\begin{equation}\label{defw}
w(r;\b)=r^{\mu}f(r;\b), \quad r\in[0,R(\b)), \ \b>0.
\end{equation}
Since $f'(r;\b)\ne 0$ for $r\in (0,R(\beta)$ by Lemma~\ref{lemma.prop}, it follows from \eqref{ODE2} that $w=w(\cdot;\beta)$ solves the differential equation
\begin{equation}\label{newfunct}
\begin{split}
(p-1)r^2w''(r)&+(N-1-2\mu(p-1))rw'(r)+\mu(\mu-N)w(r)\\
&+|rw'(r)-\mu w(r)|^{2-p}\left(\b rw'(r)-|rw'(r)-\mu
w(r)|^{p/2}\right)=0.
\end{split}
\end{equation}
Setting $w_{\b}(\cdot;\b)=\partial_{\b}w(\cdot;\b)$, we
differentiate \eqref{newfunct} with respect to $\b$ to find
\begin{equation}\label{part1}
\begin{split}
(p-1)r^2w_{\b}''(r)&+(N-1-2\mu(p-1))rw_{\b}'(r)+\mu(\mu-N)w_{\b}(r)\\
&+(2-p)(|W|^{-p}W)(r)(\b rw'(r)-|W(r)|^{p/2})(rw_{\b}'(r)-\mu
w_{\b}(r))\\&-\frac{p}{2}(|W|^{-p/2}W)(r)(rw_{\b}'(r)-\mu
w_{\b}(r))+\b|W(r)|^{2-p}rw_{\b}'(r)\\&=-|W(r)|^{2-p}rw'(r),
\end{split}
\end{equation}
where $W(r):=rw'(r)-\mu w(r)$. Let us remark at this point that, as a
difference with respect to previous works \cite{CQW03, Pe042, IL2},
the linear equation \eqref{part1} solved by $w_{\b}$ is non-homogeneous, that is, it has a nonzero right-hand side $-|W(r)|^{2-p} r w'(r)$. We next differentiate \eqref{part1} with
respect to $r$ and multiply the resulting identity by $r$ to obtain
after straightforward transformations that
\begin{equation}\label{part2}
\begin{split}
&(p-1)r^2(rw')''(r)+(N-1-2\mu(p-1))r(rw')'(r)+\mu(\mu-N)rw'(r)\\
&+(2-p)|W(r)|^{-p}W(r)(\b rw'(r)-|W(r)|^{p/2})(r(rw')'(r)-\mu
rw'(r))\\&-\frac{p}{2}|W(r)|^{-p/2}W(r)(r(rw')'(r)-\mu
rw'(r))+\b|W(r)|^{2-p}r(rw')'(r)=0.
\end{split}
\end{equation}
Introducing the differential operator
\begin{equation}
\begin{split}
L_{\b}(z)&:=(p-1)r^2z''+(N-1-2\mu(p-1))rz'+\mu(\mu-N)z\\
&+(2-p)|W(r)|^{-p}W(r)(\b rw'(r)-|W(r)|^{p/2})(rz'-\mu z)\\
&-\frac{p}{2}|W(r)|^{-p/2}W(r)(rz'-\mu z)+\b|W(r)|^{2-p}rz',
\end{split}
\end{equation}
we infer from \eqref{part1} and \eqref{part2} that
\begin{equation}\label{comp1}
L_{\b}(\partial_{\b}w(\cdot;\b))(r)=-|W(r)|^{2-p}rw'(r), \quad
L_{\b}(rw'(r;\b))=0, \quad r\in (0,R(\beta)).
\end{equation}
Our next goal is to show that the dependence of $w(\cdot;\b)$ with
respect to $\b$ is decreasing. To this end, let us first recall the following
comparison principle:

\begin{lemma}\label{lemma.comp}
Let $\b>0$, $r_1\in(0,R(\b))$ and $r_{2}\in(r_1,R(\b))$, and assume
that $w'(\cdot;\b)>0$ in $[r_1,r_2]$. Then, any function $h\in
C^2([r_1,r_2])$ satisfying $h(r_1)=h(r_2)=0$ and $L_{\b}(h)\geq0$ in
$(r_1,r_2)$, has the property that $h\leq0$ in $(r_1,r_2)$.
\end{lemma}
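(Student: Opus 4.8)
The plan is to read $L_\beta$ as a second-order linear ODE operator $L_\beta(z)=(p-1)r^2 z''+b(r)z'+c(r)z$ whose leading coefficient $(p-1)r^2$ is strictly positive on $[r_1,r_2]\subset(0,\infty)$, and then to establish the comparison property by the classical device of removing the zeroth-order term with the help of a positive solution of the homogeneous equation.

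First I would gather the terms of $L_\beta(z)$ proportional to $z'$ and to $z$, thereby exhibiting coefficients $b$ and $c$. The only point to check is the continuity on $[r_1,r_2]$ of the factors $|W|^{-p}W$ and $|W|^{-p/2}W$, which requires $W$ not to vanish there. This holds: from $w=r^\mu f$ and $W=rw'-\mu w$ one computes $W=r^{\mu+1}f'$, and Lemma~\ref{lemma.prop} gives $f'(\cdot;\beta)<0$ on $(0,R(\beta))$, so $W<0$ on $[r_1,r_2]$. Hence $b$ and $c$ are continuous and $(p-1)r^2>0$ throughout $[r_1,r_2]$.

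The linchpin is the second identity in \eqref{comp1}: the function $\varphi:=rw'(\cdot;\beta)$ (which is $C^2$, as already used in \eqref{comp1}) satisfies $L_\beta(\varphi)=0$ on $(r_1,r_2)$, and by hypothesis $w'(\cdot;\beta)>0$ on $[r_1,r_2]$, so $\varphi>0$ there. I would then set $\psi:=h/\varphi$, which is $C^2$ on $[r_1,r_2]$ since $h\in C^2$ and $\varphi\in C^2$ is positive, and compute
\[
L_\beta(h)=L_\beta(\varphi)\,\psi+(p-1)r^2\varphi\,\psi''+\big(2(p-1)r^2\varphi'+b\varphi\big)\psi'.
\]
Because $L_\beta(\varphi)=0$, this reduces $L_\beta(h)$ to a second-order operator acting on $\psi$ with no zeroth-order term and with strictly positive leading coefficient $(p-1)r^2\varphi$.

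The conclusion then follows from the weak maximum principle. Since $L_\beta(h)\geq0$, the function $\psi$ is a subsolution of an operator without zeroth-order term and positive leading coefficient, and therefore attains its maximum over $[r_1,r_2]$ at an endpoint. As $h(r_1)=h(r_2)=0$ and $\varphi>0$ force $\psi(r_1)=\psi(r_2)=0$, we obtain $\psi\leq0$ on $[r_1,r_2]$, whence $h=\varphi\psi\leq0$ on $(r_1,r_2)$. I expect the main obstacle to be conceptual rather than computational: since $c(r)$ need not be nonpositive, a naive maximum principle is unavailable, and the argument relies entirely on recognizing that $rw'$, provided by \eqref{comp1}, is precisely the positive homogeneous solution enabling the factorization $h=\varphi\psi$ that eliminates the troublesome zeroth-order term; the verification that the coefficients are truly continuous (i.e.\ $W\neq0$) is the only genuine technicality and is settled by Lemma~\ref{lemma.prop}.
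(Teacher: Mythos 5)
Your proof is correct, and it hinges on exactly the same key observation as the paper: by \eqref{comp1}, $\varphi:=rw'(\cdot;\beta)$ is a solution of $L_{\beta}(\varphi)=0$, and the hypothesis $w'(\cdot;\beta)>0$ on $[r_1,r_2]$ makes it a \emph{positive} homogeneous solution (your verification that $W=r^{\mu+1}f'<0$, so that the coefficients of $L_\beta$ are continuous on $[r_1,r_2]$, is also the right technical point and matches the remark preceding \eqref{newfunct} in the paper). Where you diverge is in what you do with this observation: the paper's proof is a one-line citation of the comparison-principle variant of Berestycki--Nirenberg--Varadhan \cite[p.~48]{BNV}, whereas you prove that principle from scratch in this one-dimensional setting, via the classical factorization $h=\varphi\psi$, the computation
\begin{equation*}
L_\beta(h)=L_\beta(\varphi)\,\psi+(p-1)r^2\varphi\,\psi''+\bigl(2(p-1)r^2\varphi'+b\varphi\bigr)\psi'
=(p-1)r^2\varphi\,\psi''+\bigl(2(p-1)r^2\varphi'+b\varphi\bigr)\psi'\,,
\end{equation*}
and the weak maximum principle for an operator with no zeroth-order term (which, strictly speaking, still needs the standard perturbation $\psi+\epsilon e^{\lambda r}$ with $\lambda$ large, since $L_\beta(h)\geq 0$ is not strict; this is textbook material and unproblematic on a compact interval where the leading coefficient is bounded below). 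What your route buys is self-containedness and transparency: the reader sees exactly why a positive homogeneous solution kills the sign-indefinite zeroth-order coefficient $c(r)$. What the paper's route buys is brevity and generality, since the BNV result covers far more general elliptic operators and domains. One minor point worth making explicit in your write-up: the regularity $\varphi\in C^2$ requires $w\in C^3$ on $(0,R(\beta))$, which follows by bootstrapping in \eqref{ODE3} using $f'<0$ (the same regularity the paper tacitly uses when differentiating \eqref{part1} to get \eqref{part2}).
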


\begin{proof}
Owing to \eqref{comp1} and the positivity assumption on $w'(\cdot;\beta)$, Lemma~\ref{lemma.comp} follows from the variant of the comparison principle proved in \cite[p.~48]{BNV}.
\end{proof}

Using this comparison principle, we are able to prove the main
monotonicity result with respect to the parameter $\b$.

\begin{proposition}\label{prop.monot}
Let $\b>0$. Assume that there exists $r_0\in(0,R(\b))$ such that
$w'(\cdot;\b)> 0$ in $(0,r_0)$. Then
\begin{equation*}
\partial_{\b}w(r;\b)<0 \quad \hbox{for} \ r\in(0,r_0].
\end{equation*}
\end{proposition}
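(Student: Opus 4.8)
My plan is to exploit the two identities recorded in \eqref{comp1}. On $(0,r_0)$ the map $r\mapsto rw'(r;\beta)$ is a \emph{positive} solution of the homogeneous equation $L_\beta(rw')=0$ (positive by the standing hypothesis $w'>0$), while $\partial_\beta w(\cdot;\beta)$ solves the \emph{inhomogeneous} equation $L_\beta(\partial_\beta w)=-|W|^{2-p}rw'$ whose right-hand side has a fixed sign. Indeed $W=rw'-\mu w=r^{\mu+1}f'$, and since $f'<0$ on $(0,R(\beta))$ by Lemma~\ref{lemma.prop} while $w'>0$ on $(0,r_0)$, the forcing term $-|W|^{2-p}rw'$ is strictly negative there. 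The other ingredient is the behaviour at the origin: because $\partial_\beta w=r^\mu\,\partial_\beta f$, the expansions of Lemma~\ref{lemma.exp2} give $\partial_\beta w(r;\beta)<0$ for small $r>0$.

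First I would reduce the order of the equation for $\partial_\beta w$ by dividing out the positive kernel element. Set $\phi:=rw'$ and $q:=\partial_\beta w/\phi$ on $(0,r_0)$, which is legitimate since $\phi>0$ there. Writing $L_\beta(z)=(p-1)r^2z''+b(r)z'+c(r)z$ and substituting $\partial_\beta w=\phi q$, the zeroth-order term cancels because $L_\beta(\phi)=0$, leaving a first-order equation for $q'$. Multiplying by $\phi$ and using $\phi(\phi q''+2\phi'q')=(\phi^2q')'$ recasts it in divergence form,
\begin{equation*}
(p-1)r^2\,\bigl(\phi^2 q'\bigr)' + b(r)\,\phi^2 q' = -|W|^{2-p}\,\phi^2 .
\end{equation*}
With the integrating factor $\rho:=\exp\bigl(\int_{r_*}^r b(s)/((p-1)s^2)\,ds\bigr)>0$ for a fixed $r_*\in(0,r_0)$, this becomes $\bigl(\rho\,\phi^2 q'\bigr)'=-\rho\,|W|^{2-p}\phi^2/((p-1)r^2)<0$ on $(0,r_0)$.

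Hence $\Psi:=\rho\,\phi^2 q'$ is strictly decreasing on $(0,r_0)$. On the other hand, combining the expansions of Lemma~\ref{lemma.exp2} with $\phi=rw'\sim\mu r^\mu$ gives $q(r)\sim-c_0\,r^{p/(p-1)}$ and $q'(r)\sim-c_0\,\frac{p}{p-1}\,r^{1/(p-1)}$ with $c_0>0$ as $r\to0$, so $q'<0$, hence $\Psi<0$, for small $r>0$. Being negative near the origin and strictly decreasing, $\Psi$ stays negative on all of $(0,r_0)$; since $\rho\,\phi^2>0$, this forces $q'<0$ throughout $(0,r_0)$. Thus $q$ is strictly decreasing, and as $q(r)\to0$ when $r\to0^+$ we obtain $q<0$, i.e. $\partial_\beta w=\phi q<0$, on $(0,r_0)$. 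Strictness at $r_0$ follows by continuity when $w'(r_0)>0$ (then $q(r_0)<0$ as the decreasing limit of negative values); when $w'(r_0)=0$, a short limiting analysis of the reduced equation shows that, although $q\to-\infty$, the integrable right-hand side keeps $\Psi$ finite and negative at $r_0$, so that the product $\partial_\beta w=\phi q$ remains bounded and strictly negative in the limit $r\to r_0$.

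The main obstacle is exactly the feature the authors stress: in contrast with the homogeneous situations of \cite{CQW03,Pe042,IL2}, the equation for $\partial_\beta w$ carries the nonzero right-hand side $-|W|^{2-p}rw'$. The resolution is to turn this into an asset: its definite sign, once the second-order problem is reduced against the positive kernel element $\phi=rw'$ (the very object on which the comparison principle Lemma~\ref{lemma.comp} rests) to the monotone first-order quantity $\Psi$, determines the sign of $\partial_\beta w$ outright. The delicate points are the control of the singular integrating factor and of the quotient $q$ as $r\to0$, which relies entirely on the fine expansion of Lemma~\ref{lemma.exp2}, together with the limiting analysis at the endpoint $r_0$.
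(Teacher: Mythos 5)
Your proof is correct, but it takes a genuinely different --- and in several respects more streamlined --- route than the paper's. The paper proves the interior negativity by contradiction: it perturbs $-w_\beta$ into $z_\e=-w_\beta-\e rw'$, locates an interval on which $z_\e$ is positive with zero boundary values, and invokes the Berestycki--Nirenberg--Varadhan comparison principle (Lemma~\ref{lemma.comp}); strictness at $r_0$ is then handled by a second, separate argument built on the Wronskian $D=w_\beta'v-w_\beta v'$ (with $v=rw'$), a differential inequality for $D$, Sturm's oscillation theorem and a further comparison function. You instead reduce the order against the positive kernel element $\phi=rw'$; your quantity $\phi^2q'$ is \emph{exactly} the paper's Wronskian $D$, and your divergence-form identity $(\rho\phi^2q')'=-\rho|W|^{2-p}\phi^2/((p-1)r^2)<0$ (with $\rho=\exp\int a_1$) is the exact-equality version of the computation that the paper integrates into \eqref{diff.ineq}. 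The gain is that you extract the whole conclusion from this single monotone quantity: $\Psi<0$ near $0$ propagates by monotonicity, so $q'<0$, $q$ is decreasing, $q(0^+)=0$ gives $q<0$, i.e. $w_\beta=\phi q<0$, with no appeal to Lemma~\ref{lemma.comp} and no Sturm oscillation. Two compressed steps should be written out. First, the asymptotics $q'(r)\sim-c_0\tfrac{p}{p-1}r^{1/(p-1)}$ cannot be obtained by formally differentiating that of $q$; it does follow, though, from the \emph{two} expansions of Lemma~\ref{lemma.exp2} (for $\partial_\beta f$ and $\partial_\beta f'$) together with $\phi\sim\mu r^\mu$ and $\phi'\sim\mu^2 r^{\mu-1}$, the latter requiring the behaviour of $f''$ near $0$, which is read off from the equation --- this is precisely the computation the paper performs for $D$ near $0$. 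Second, the endpoint case $w'(r_0)=0$, which is the case actually needed later (Proposition~\ref{prop.zA} applies the result with $r_0=R_1(\beta)$, where $w'$ vanishes), is only sketched; to complete it within your framework, note that $\phi^2q'=\phi w_\beta'-\phi'w_\beta$ extends continuously to $r_0$ with limit $D_0=\Psi(r_0)/\rho(r_0)<0$ (monotonicity of $\Psi$, continuity and positivity of $\rho$), that $\phi(r_0)=0$ together with $D_0\neq0$ forces $\phi'(r_0)\neq0$, hence $\phi'(r_0)<0$ because $\phi>0$ on $(0,r_0)$, and therefore $w_\beta(r_0)=-D_0/\phi'(r_0)<0$. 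With these two points made explicit, your argument is a complete and self-contained alternative to the paper's proof.
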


\begin{proof}
Set $w:=w(\cdot;\beta)$ and $w_\beta := \partial_\beta w(\cdot;\beta)$. Using the expansion \eqref{expan.beta} of $\partial_\beta f(\cdot;\beta)$ as $r\to 0$, we find
$$
-w_{\b}(r)=-r^{\mu}\partial_{\b}f(r;\b)\sim\frac{1}{p}\left(\frac{\mu}{N}\right)^{1/(p-1)}\b^{(2-p)/(p-1)}r^{p/(p-1)}
$$
as $r\to 0$, so that $-w_{\b}>0$ in a right neighborhood of $r=0$.
Setting
$$
r_1:=\inf\{r\in(0,r_0): w_{\b}(r)=0\},
$$
we have $r_1>0$ and $w_{\b}<0$ in $(0,r_1)$. Assume for contradiction that
$r_1<r_0$. Then $w_{\b}(r_1)=0=w_\beta(0)$ and $-w_\beta$ attains its positive maximum at some point $r_m\in (0,r_1)$.
Fix $\e>0$ such that
$$
\e\sup\limits_{[0,r_1]}\{ rw'(r) \} \leq - \frac{w_\beta(r_m)}{2} =
\frac{1}{2}\sup\limits_{[0,r_1]}(-w_{\b}(r)).
$$
Define
$$
z_{\e}(r):=-w_{\b}(r)-\e rw'(r), \quad r\in[0,r_1].
$$
On the one hand, $z_{\e}(r_1)=-\e r_1w'(r_1)<0$ and it follows from \eqref{expan1}, \eqref{expan2}, and \eqref{expan.beta} that, as $r\to 0$,
\begin{equation}\label{part3}
\begin{split}
z_{\e}(r)&=-r^{\mu}\partial_{\b}f(r;\b)-\e r^{\mu}(rf'(r;\beta)+\mu f(r;\beta))
\\&=-r^{\mu}\left(\frac{1}{p}\left(\frac{\mu}{N}\right)^{1/(p-1)}\b^{(2-p)/(p-1)}r^{p/(p-1)}+\e\mu
-\frac{\e\mu}{p}\left(\frac{\b\mu}{N}\right)^{1/(p-1)}r^{p/(p-1)}\right.\\&\left.+o(r^{p/(p-1)})\right)
\sim-\e\mu r^{\mu}.
\end{split}
\end{equation}
We may then choose $\delta\in(0,r_m)$ small enough such that $z_{\e}(\delta)<0$. On the other hand, by the choice of $\e>0$, we have
$$
z_{\e}(r_m)\geq\sup\limits_{[0,r_1]}\{-w_{\b}(r)\}-\e\sup\limits_{[0,r_1]}\{rw'(r)\}\geq - \frac{w_{\b}(r_m)}{2}>0.
$$
Since $z_{\e}(\delta)<0<z_{\e}(r_m)$ and $z_{\e}(r_1)<0<z_{\e}(r_m)$,
there exist $r_2\in(\delta,r_m)$ and $r_3\in(r_m,r_1)$ such that
\begin{equation}\label{slide}
z_{\e}(r_2)=z_{\e}(r_3)=0, \quad z_{\e}(r)>0 \ \hbox{for} \
r\in(r_2,r_3).
\end{equation}
By \eqref{comp1} and the positivity of $w'(\cdot;\beta)$, we have $L_{\b}(z_{\e})>0$ in $(r_2,r_3)$. Thus,
Lemma~\ref{lemma.comp} implies that $z_{\e}\leq0$ in $(r_2,r_3)$,
which contradicts \eqref{slide}. Consequently, $r_1=r_0$ and
\begin{equation}\label{comp2}
\partial_{\b} w(r;\b)<0 \quad \hbox{for} \ r\in(0,r_0).
\end{equation}

It remains to check that $\partial_{\b}w(r_0;\b)<0$. To this end, introduce the
Wronskian
$$
D(r):=-w_{\b}(r)\ v'(r)+w_{\b}'(r)\ v(r), \quad r\in [0,R(\beta))\,,
$$
with $v(r):=rw'(r)$. Then
$$
D'(r)=w_{\b}''(r)\ v(r)-w_{\b}(r)\ v''(r).
$$
Since $L_\beta(z)$ also reads $L_{\b}(z)(r)=(p-1)r^2\ \left[ z''(r)+a_1(r)z'(r)+a_0(r)z(r) \right]$ for
suitable functions $a_1$ and $a_0$, it follows from \eqref{comp1} that
$$
-|W(r)|^{2-p}\ v(r)=(p-1)r^2(w_{\b}''+a_1w_{\b}'+a_0w_{\b})(r)
$$
(recall that $W(r)=rw'(r)-\mu w(r)$) and
$$
0=(p-1)r^2(v''+a_1\ v'+a_0\ v)(r).
$$
Using these equalities, we can express $D'$ in terms of $D$,
obtaining the following differential inequality for $D$:
\begin{equation*}
\begin{split}
D'(r)&=v(r)\left[-\frac{|W(r)|^{2-p}\ v(r)}{(p-1)r^2}-a_1(r)\ w_{\b}'(r)-a_0(r)\ w_{\b}(r)\right]-w_{\b}(r)(-a_1\ v'-a_0\ v)(r)\\
&=-\frac{|W(r)|^{2-p}\ v(r)^2}{(p-1)r^2}-\left(a_1\ w_{\b}'\ v+a_0\ w_{\b}\ v-a_1\ w_{\b}\ v-a_0\ w_{\b}\ v\right)(r)\\
&\leq a_1(r)\ (-w_{\b}'(r)\ v(r)+w_{\b}(r)\ v'(r))=-a_1(r)\ D(r),
\end{split}
\end{equation*}
Therefore, by integration we find that
\begin{equation}\label{diff.ineq}
D(r)\leq D(s)\exp\left(-\int_s^r a_{1}(\tau)\,d\tau\right), \quad 0<s<r\leq r_0\,.
\end{equation}

We next express $D$ in terms of $f:=f(\cdot;\beta)$ and $f_\beta:=\partial_\beta f(\cdot;\beta)$ with the aim of studying its
behavior as $r\to 0$. Since
\begin{equation*}
\begin{split}
&v(r)=rw'(r)=\mu r^{\mu}f(r)+r^{\mu+1}f'(r),\\
&v'(r)=\mu^2r^{\mu-1}f(r)+(2\mu+1)r^{\mu}f'(r)+r^{\mu+1}f''(r)
\end{split}
\end{equation*}
and
$$
w_{\b}(r)=r^{\mu}f_{\b}(r), \quad w_{\b}'(r)=\mu
r^{\mu-1}f_{\b}(r)+r^{\mu}f_{\b}'(r),
$$
we have by straightforward computations
$$
D(r)=r^{2\mu}\left[f_{\b}'(r)(rf'(r)+\mu
f(r))-f_{\b}(r)((\mu+1)f'(r)+rf''(r))\right].
$$
Using Lemma~\ref{lemma.exp2}, we have as $r\to 0$,
\begin{equation*}
\begin{split}
&f_{\b}(r)\sim-\frac{1}{p}\left(\frac{\mu}{N}\right)^{1/(p-1)}\b^{(2-p)/(p-1)}r^{p/(p-1)},\\
&f_{\b}'(r)\sim-\frac{1}{p-1}\left(\frac{\mu}{N}\right)^{1/(p-1)}\b^{(2-p)/(p-1)}r^{1/(p-1)}
\end{split}
\end{equation*}
and, taking into account that $rf'(r)+\mu f(r)\sim\mu f(r)$ and Lemma~\ref{lemma.exp1}, we
have as $r\to 0$,
\begin{equation*}
D(r)\sim r^{2\mu}\mu
f(r)f_{\b}'(r)\sim-\frac{\mu}{p-1}\left(\frac{\mu}{N}\right)^{1/(p-1)}\b^{(2-p)/(p-1)}r^{2\mu+(1/(p-1))}.
\end{equation*}
Consequently, $D(0)=0$ and there is some $\delta>0$ sufficiently
small such that $D(s)<0$ for any $s\in(0,\delta)$. From
\eqref{diff.ineq} we deduce that
\begin{equation}
D(r)<0 \;\;\;\mbox{ for all }\;\;\; r\in(0,R(\b)).\label{zz2}
\end{equation}

Fix now $s_0\in(0,r_0)$ and let $\psi$ be the solution to
$L_{\b}(\psi)=0$ in $(s_0,r_0)$ with initial condition $\psi(s_0)=0$,
$\psi'(s_0)=1$. As $v(s)=sw'(s)>0$ for all $s\in(s_0,r_0)$,
Sturm's oscillation theorem guarantees that $\psi>0$ in $(s_0,r_0]$.
We define
$$
\varphi(r):=-w_{\b}(r)+\frac{w_{\b}(s_0)}{v(s_0)}\ v(r)+\frac{D(s_0)}{v(s_0)}\ \psi(r)\,, \quad r\in [s_0,r_0]\,,
$$
and notice that $\varphi(s_0)=\varphi'(s_0)=0$. Moreover,
$L_{\b}(\varphi)=-L_{\b}(w_{\b})>0$ in $(s_0,r_0)$ by \eqref{comp1} and the positivity of $w'$.
In particular $L_{\b}(\varphi)(s_0)=(p-1)s_0^2\varphi''(s_0)>0$,
hence $\varphi''(s_0)>0$, which implies that $\varphi>0$ in a right
neighborhood of $s_0$. Then Lemma~\ref{lemma.comp} guarantees that $\varphi$ cannot vanish in $(s_0,r_0]$ and thus
$\varphi>0$ in $(s_0,r_0]$. In particular, owing to \eqref{zz2},
\begin{equation*}
-w_{\b}(r_0)>-\frac{w_{\b}(s_0)}{v(s_0)}\ v(r_0) - \frac{D(s_0)}{v(s_0)}\ \psi(r_0)>\frac{|w_{\b}(s_0)|}{v(s_0)}\ v(r_0)\geq 0,
\end{equation*}
which ends the proof.
\end{proof}

\noindent \textbf{Splitting into three sets.} Coming back to
$w(\cdot;\b)$ which solves \eqref{newfunct}, we first note that
\eqref{newfunct} has two constant solutions, the zero solution and
the solution
\begin{equation}\label{wstar}
w^*:=\frac{(\mu-N)^{2/(2-p)}}{\mu}.
\end{equation}
In addition, it follows from \eqref{defw} and Lemma~\ref{lemma.exp1} that, as $r\to0$,
\begin{equation}
w'(r;\beta)=r^{\mu-1}(rf'(r;\beta)+\mu f(r;\beta))\sim\mu r^{\mu-1}, \label{zz3}
\end{equation}
whence $w'(\cdot;\beta)>0$ in a right neighborhood of $r=0$. As in
\cite{CQW03, IL2, Pe042} we then split the range $(0,\infty)$ of $\b$ into three disjoint
sets:
\begin{equation*}
\begin{split}
&A:=\{\b>0: \hbox{there} \ \hbox{exists} \ R_{1}(\b)\in(0,R(\b)) \
\hbox{such that} \ w'(R_{1}(\b);\b)=0\},\\
&B:=\{\b>0: w'(\cdot;\b)>0 \ \hbox{in} \ (0,\infty), \
\lim\limits_{r\to\infty}w(r;\b)<\infty\},\\
&C:=\{\b>0: w'(\cdot;\b)>0 \ \hbox{in} \ (0,\infty), \
\lim\limits_{r\to\infty}w(r;\b)=\infty\}.
\end{split}
\end{equation*}
Since $w'(\cdot;\b)>0$ in a right neighborhood of $r=0$, we indeed
have that $A\cup B\cup C=(0,\infty)$. We will next show that $A$ and
$C$ are open intervals, so that $B$ is nonempty and closed. In a second
step we will prove that $B$ reduces to a single point, proving in
this way Theorem~\ref{th1}.

\subsection{Characterization of the set $A$}\label{secsA}

As in \cite{CQW03, IL2, Pe042}, the following characterization of $A$ is available:

\begin{lemma}\label{lemma.caractA}
Let $\b>0$. Then the following four assertions are equivalent:

\noindent (a) $\b\in A$.

\noindent (b) There is $R_{1}(\b)\in(0,R(\b))$ such that
$w'(\cdot;\b)>0$ in $(0,R_1(\b))$, $w'(\cdot;\b)<0$ in
$(R_1(\b),R(\b))$ and $w''(R_1(\b);\b)<0$.

\noindent (c) We have
\begin{equation}\label{boundA}
\sup\limits_{r\in[0,R(\b))}w(r;\b)<w^*,
\end{equation}
where $w^*$ is defined by \eqref{wstar}.

\noindent (d) $R(\b)<\infty$.
\end{lemma}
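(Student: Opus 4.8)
The plan is to prove the four-way equivalence by establishing a cycle of implications, exploiting the fact that $A$, $B$, $C$ partition $(0,\infty)$ and that $w'(\cdot;\b)>0$ near $r=0$ by \eqref{zz3}. The most natural order is to show (a) $\Rightarrow$ (b) $\Rightarrow$ (c) $\Rightarrow$ (d) $\Rightarrow$ (a), where several of the reverse-direction arguments will be handled by contraposition using the partition. I expect the implication (c) $\Rightarrow$ (d) to be the main obstacle, since it requires converting a bound on $w$ into finite-time extinction of $f$, and this is where the structure of the ODE \eqref{newfunct} and the role of the constant solution $w^*$ from \eqref{wstar} really enter.

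To prove (a) $\Rightarrow$ (b), I would take the first critical point $R_1(\b)$ guaranteed by $\b\in A$, so that $w'>0$ on $(0,R_1(\b))$ by definition of the infimum, and analyze the sign of $w''(R_1(\b);\b)$. At a first maximum of $w$ we have $w'(R_1(\b))=0$ and $w''(R_1(\b))\le 0$; I would rule out $w''(R_1(\b))=0$ by feeding $w'(R_1(\b))=0$ into \eqref{newfunct} to get $\mu(\mu-N)w(R_1(\b)) + |\mu w(R_1(\b))|^{2-p}(-|\mu w(R_1(\b))|^{p/2})=(p-1)R_1(\b)^2 w''(R_1(\b))$ so that a vanishing second derivative would force $w(R_1(\b))$ to equal the constant $w^*$, and then a uniqueness argument for the second-order ODE would make $w\equiv w^*$, contradicting $w(0)=0$. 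Hence $w''(R_1(\b))<0$, and a continuation argument shows $w'$ stays negative on $(R_1(\b),R(\b))$ because any later zero of $w'$ would have to be a local minimum, which the same ODE relation forbids while $0<w<w^*$.

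For (b) $\Rightarrow$ (c), since $w$ is strictly decreasing past its unique maximum $R_1(\b)$ and $f=r^{-\mu}w$, strict monotonicity down forces $f$ to hit zero at finite $R(\b)<w^*$-controlled range, giving the strict bound \eqref{boundA}: the supremum of $w$ is attained precisely at $R_1(\b)$, and I would show $w(R_1(\b))<w^*$ by noting that at the maximum the ODE relation reads $\mu(\mu-N)w(R_1(\b))=|\mu w(R_1(\b))|^{2-p+p/2}$, which pins $w(R_1(\b))$ to a value strictly below $w^*$ whenever $w''(R_1(\b))<0$. The implication (c) $\Rightarrow$ (d) is where I anticipate the real difficulty: from the uniform bound $w<w^*$ I must deduce that $f$ actually vanishes at a finite radius. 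Here I would argue that if instead $R(\b)=\infty$, then by Lemma~\ref{lemma.prop} both $f$ and $f'$ tend to $0$, so $w=r^\mu f$ either converges to a finite limit or diverges; I would use the bound $w<w^*$ together with an analysis of \eqref{newfunct} to show the only admissible finite limit is a constant solution, namely $0$ or $w^*$, and exclude both—excluding $w^*$ by the strict inequality and $0$ by a lower-bound or energy argument—thereby contradicting $R(\b)=\infty$.

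Finally, (d) $\Rightarrow$ (a) follows almost formally: if $R(\b)<\infty$ then $f(R(\b);\b)=0$ while $f(0)=1>0$, so $w=r^\mu f$ vanishes at both $r=0$ and $r=R(\b)$ and is positive in between, hence $w'$ must change sign and vanish at some interior point, placing $\b$ in $A$. Closing the cycle then yields all four equivalences. The delicate points to write carefully are the exclusion of $w''=0$ at the maximum in (a) $\Rightarrow$ (b) and the limit-value analysis in (c) $\Rightarrow$ (d); both hinge on the explicit value $w^*$ of \eqref{wstar} as the unique nonzero constant solution of \eqref{newfunct}, so I would isolate that observation as the key structural lemma driving the whole equivalence.
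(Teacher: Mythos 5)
Your cycle (a)$\Rightarrow$(b)$\Rightarrow$(c)$\Rightarrow$(d)$\Rightarrow$(a) is organized differently from the paper, which proves (a)$\Rightarrow$(b)$\Rightarrow$(c)$\Rightarrow$(a) and, separately, (a)$\Rightarrow$(d)$\Rightarrow$(a). Your steps (a)$\Rightarrow$(b), (b)$\Rightarrow$(c) and (d)$\Rightarrow$(a) do coincide in substance with the paper's arguments: first critical point, exclusion of $w''(R_1(\b);\b)=0$ via uniqueness for \eqref{newfunct} and the constant solution $w^*$, exclusion of a later interior minimum, and Rolle's theorem for (d)$\Rightarrow$(a). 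One slip in (b)$\Rightarrow$(c): at the maximum the equation gives the \emph{inequality} $\mu(\mu-N)w(R_1(\b);\b)-(\mu w(R_1(\b);\b))^{(4-p)/2}=-(p-1)R_1(\b)^2w''(R_1(\b);\b)>0$, not the equality you wrote; the equality would pin $w(R_1(\b);\b)$ exactly at $w^*$, whereas the strict inequality coming from $w''(R_1(\b);\b)<0$ is precisely what yields $w(R_1(\b);\b)<w^*$.

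The genuine gap is in (c)$\Rightarrow$(d), the step you yourself flagged as the main difficulty. First, from $f\to0$ and $f'\to0$ (Lemma~\ref{lemma.prop}) you cannot conclude that $w=r^\mu f$ ``either converges to a finite limit or diverges'': that dichotomy needs monotonicity of $w$ for large $r$, which you would have to extract either by first proving (c)$\Rightarrow$(a) by contradiction (assuming $w'>0$ on all of $(0,R(\b))$, so that $w$ is increasing and bounded --- this is the paper's route) or by re-running your no-interior-minimum argument; and identifying the admissible limits as $\{0,w^*\}$ requires the sequence extraction of Lemma~\ref{lemma.sequence} to kill the $rw'$ and $r^2w''$ terms in \eqref{newfunct}, which you never invoke. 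Second, and fatally, your exclusion of the limit $0$ ``by a lower-bound or energy argument'' has no substance behind it: the energy \eqref{energy} is nonincreasing and says nothing that prevents $f$ from decaying strictly faster than $r^{-\mu}$, and no elementary ODE estimate in the proposal rules this out. The paper excludes $w\to0$ by invoking the gradient estimate \eqref{gradest}, $|f'(r;\b)|\le C_4 f(r;\b)^{2/p}$ (Lemma~\ref{lemma.grad}), a viscosity-solution result imported from \cite{IL1}; it gives $r|f'(r;\b)|/f(r;\b)\le C\,w(r;\b)^{(2-p)/p}\to0$, hence $w'(r;\b)=r^{\mu-1}(rf'(r;\b)+\mu f(r;\b))\ge0$ for large $r$, contradicting the strict decrease of $w$ to $0$. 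This external PDE input is the crux of the implication, and your proposal neither identifies it nor offers a substitute.
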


Before proving it, we recall a general analysis result proved in, e.g., \cite[Lemma~2.9]{IL2}.

\begin{lemma}\label{lemma.sequence}
Let $h$ be a nonnegative function in $C^1([0,\infty))$ such that
there is a sequence $(r_k)_{k\ge 1}$, $r_k\to\infty$ as
$k\to\infty$, for which $h(r_k)\longrightarrow 0$ as $k\to\infty$.
Then, there is a sequence $(\rho_k)_{k\ge 1}$, $\rho_k\to\infty$ as
$k\to\infty$, such that $h(\rho_k)\longrightarrow 0$ and $\rho_k
h'(\rho_k)\longrightarrow 0$ as $k\to\infty$.
\end{lemma}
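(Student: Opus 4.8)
The plan is to eliminate the factor $\rho$ multiplying $h'$ by passing to the logarithmic variable, and then argue by contradiction, exploiting the fact that a nonnegative $C^1$ function cannot remain strictly monotone on an unbounded set while keeping its $\liminf$ equal to zero.

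First I would set $H(s):=h(e^s)$ for $s\ge0$, so that $H\in C^1([0,\infty))$ is nonnegative and $H'(s)=e^s\,h'(e^s)$. Writing $\rho=e^s$, the quantity $\rho h'(\rho)$ is \emph{exactly} $H'(s)$, which is the whole point of the substitution. The hypothesis $h(r_k)\to0$ along $r_k\to\infty$ becomes $H(\log r_k)\to0$, so in particular $\liminf_{s\to\infty}H(s)=0$. Moreover, producing a sequence $\rho_k\to\infty$ with $h(\rho_k)\to0$ and $\rho_kh'(\rho_k)\to0$ is equivalent, via $\sigma_k:=\log\rho_k$, to producing $\sigma_k\to\infty$ with $H(\sigma_k)\to0$ and $H'(\sigma_k)\to0$, that is, to the reduced statement
\[
\liminf_{s\to\infty}\big(H(s)+|H'(s)|\big)=0 .
\]
It therefore suffices to establish this for an arbitrary nonnegative $H\in C^1([0,\infty))$ with $\liminf_{s\to\infty}H(s)=0$.

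I would prove the reduced claim by contradiction. If it fails, there are $\e>0$ and $S>0$ with $H(s)+|H'(s)|\ge\e$ for all $s\ge S$. On the open set $U:=\{s>S:\ H(s)<\e\}$ we then have $|H'(s)|\ge\e-H(s)>0$, so $H'$ never vanishes and keeps a constant sign on each connected component of $U$; hence $H$ is strictly monotone on each component. Since $\liminf_{s\to\infty}H(s)=0<\e$, the set $U$ is unbounded, and its components are disjoint open intervals, of which at most one can be adjacent to the starting point $S$. I would then rule out every possible configuration. A bounded component $(\alpha,\beta)$ with $\alpha>S$ forces, by continuity, $H(\alpha)=H(\beta)=\e$, which is incompatible with strict monotonicity on $(\alpha,\beta)$. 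An unbounded component $(\alpha,\infty)$ on which $H$ is increasing gives $H(s)\ge H(s_0)>0$ for $s\ge s_0>\alpha$ (strict positivity coming from $H'(s_0)\neq0$ together with $H\ge0$), contradicting $\liminf_{s\to\infty}H(s)=0$. Finally, an unbounded component $(\alpha,\infty)$ on which $H$ is decreasing has $H(s)\downarrow L<\e$, so $|H'(s)|\ge\e-H(s)\to\e-L>0$ and thus $H'(s)\le-(\e-L)/2$ for $s$ large; integrating yields $H(s)\to-\infty$, contradicting $H\ge0$. Since $U$ is unbounded, at least one of these configurations must occur, and each is impossible, so we reach the desired contradiction.

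The substitution and the reduction are routine; the genuine content lies in the contradiction step. The case I expect to require the most care is the unbounded decreasing component, where the nonnegativity of $H$ (equivalently $h\ge0$) is indispensable: without it, $H$ could simply decrease to $-\infty$ with a bounded-away-from-zero slope and no contradiction would arise. The only other delicate point is the bookkeeping of the component structure of $U$, in particular checking that the single component possibly adjacent to $S$ plays no special role once one observes that $U$ being unbounded forces one of the three listed configurations to occur beyond $S$.
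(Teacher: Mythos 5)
Your proof is correct. Note first that the paper itself does not prove this lemma: it is stated as a known auxiliary result with the proof deferred to \cite[Lemma~2.9]{IL2}, so there is no in-paper argument to compare against; your proposal supplies a complete, self-contained proof of what the paper only cites. The logarithmic substitution $H(s)=h(e^s)$ is exactly the right normalization here, since it turns the weighted quantity $\rho h'(\rho)$ into $H'(s)$ and reduces the statement to showing $\liminf_{s\to\infty}\left(H(s)+|H'(s)|\right)=0$ for a nonnegative $C^1$ function with $\liminf_{s\to\infty}H(s)=0$. The contradiction argument is sound: on $U=\{s>S:\ H(s)<\varepsilon\}$ one has $|H'|\ge \varepsilon-H>0$, so $H$ is strictly monotone on each component; a bounded component $(\alpha,\beta)$ with $\alpha>S$ would satisfy $H(\alpha)=H(\beta)=\varepsilon$ by maximality and continuity, which strict monotonicity forbids; an unbounded increasing component forces $\liminf_{s\to\infty}H(s)>0$; and an unbounded decreasing component gives $H(s)\downarrow L<\varepsilon$, hence $H'(s)\le-(\varepsilon-L)/2$ eventually, so $H\to-\infty$, contradicting $H\ge0$ (this is indeed the one place where nonnegativity is essential, as you observe). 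Your bookkeeping concern also resolves correctly: since bounded components with left endpoint strictly greater than $S$ are impossible and at most one component is adjacent to $S$, the unboundedness of $U$ forces the existence of an unbounded component $(\alpha,\infty)$, and the two monotonicity cases apply to it whether $\alpha=S$ or $\alpha>S$. One small simplification: in the increasing case you do not need the derivative to obtain strict positivity; strict monotonicity alone gives $H(s)>H(s_1)>H(s_0)\ge 0$ for $s>s_1>s_0$, whence $\liminf_{s\to\infty}H(s)\ge H(s_1)>0$.
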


\begin{proof}[Proof of Lemma~\ref{lemma.caractA}]
Consider first $\b\in A$. Recalling \eqref{zz3}, we have
$$
R_1(\b):=\inf\{r>0: w'(r;\b)=0\}\in (0,R(\b))
$$
according to the definition of $A$, and $w$ is such that
$w'(\cdot;\b)>0$ in $(0,R_1(\b))$, $w'(R_1(\b);\b)=0$, and
$w''(R_1(\b);\b)\leq0$. Assume for contradiction that
$w''(R_1(\b);\b)=0$. It then follows from \eqref{newfunct} that
$$
\mu(\mu-N)w(R_1(\b);\b)-(\mu w(R_1(\b);\b))^{2-p/2}=0,
$$
that is, $w(R_1(\b);\b)=w^*$. Since $w'(R_1(\b);\b)=0$ and $w^*$ is a constant solution of \eqref{newfunct}, the well-posedness
of \eqref{newfunct} implies that $w(\cdot;\b)\equiv w^*$ in
$[0,R(\b))$, which contradicts the fact that $w(0;\b)=0$.
Consequently, $w''(R_1(\b);\b)<0$ and $w'(\cdot;\beta)$ is negative in a
right neighborhood of $R_1(\b)$. We then define
$$
R_2(\b):=\inf\{r\in(R_1(\b),R(\b)): w'(r;\b)=0\},
$$
and notice that $w'(r;\b)<0$ for $r\in(R_1(\b),R_2(\b))$. Assume for
contradiction that $R_2(\b)<R(\b)$. Then $w'(R_2(\b);\b)=0$ and
$w''(R_2(\b);\b)\geq0$. Evaluating \eqref{newfunct} at $r=R_1(\b)$
and at $r=R_2(\b)$, we find
$$
\mu(\mu-N)w(R_1(\b);\b)-(\mu
w(R_1(\b);\b))^{2-p/2}=-(p-1)R_1(\b)^2w''(R_1(\b);\b)>0
$$
and
$$
\mu(\mu-N)w(R_2(\b);\b)-(\mu
w(R_2(\b);\b))^{2-p/2}=-(p-1)R_2(\b)^2w''(R_2(\b);\b) \leq 0,
$$
from which we deduce that
\begin{equation}\label{comp3}
w(R_2(\b);\b)\geq w^*>w(R_1(\b);\b).
\end{equation}
This inequality contradicts the fact that $w(\cdot;\beta)$ is decreasing in
$(R_1(\b),R_2(\b))$. Therefore, $R_2(\b)=R(\b)$ and we have proved
that (a) implies (b).

Assume now that (b) holds true. Then $R_1(\b)$ is clearly a point of
maximum of $w(\cdot;\b)$ in $(0,R(\b))$ and it follows from
\eqref{newfunct} and \eqref{comp3} that
$$
\sup\limits_{r\in(0,R(\b))}w(r;\b)\leq w(R_1(\b);\b)<w^*,
$$
and thus assertion~(c).

Now, if $\b>0$ is such that \eqref{boundA} holds true, let us assume
for contradiction that $w'(\cdot;\b)>0$ in $(0,R(\b))$. Then $w(r;\beta)>w(0;\beta)=0$ for $r\in (0,R(\beta)$ which implies that
$R(\b)=\infty$. Moreover,
$$
\lim\limits_{r\to\infty} w(r;\beta) = \lambda := \sup_{r\in [0,\infty)}\{ w(r;\beta \} \in (0,w^*)\,,
$$
the bounds on $\lambda$ following from the positivity of $w(\cdot;\beta)$ and \eqref{boundA}. In particular, $w'(\cdot;\beta)\in L^1(0,\infty)$ and there exists a sequence $(r_k)_{k\ge 1}$ of positive real numbers, $r_k\to\infty$,
such that $r_kw'(r_k;\b)\longrightarrow 0$ as $k\to\infty$. Using Lemma~\ref{lemma.sequence}, we may find a sequence $(\varrho_k)_{k\ge 1}$, $\varrho_k\to\infty$, such that
$$
\lim\limits_{k\to\infty}\varrho_kw'(\varrho_k;\b)=\lim\limits_{k\to\infty}\varrho_k^2w''(\varrho_k;\b)=0\,.
$$
Taking $r=\varrho_k$ in \eqref{newfunct} and passing to the limit as
$k\to\infty$, we obtain that
$\mu(\mu-N)\lambda=(\mu\lambda)^{2-p/2}$, whence
$\lambda\in\{0,w^*\}$. Since we already know that $\lambda\in (0,w^*)$, we arrive at a contradiction. Therefore,
$w'(\cdot;\b)$ vanishes at least once in $(0,R(\b))$, hence $\b\in A$.

Consider now $\b\in A$ and assume for contradiction that $R(\b)=\infty$.
Then we deduce from (b) that $w$ is decreasing in
$(R_1(\b),\infty)$, hence $w$ has a limit $l\geq 0$ as $r\to\infty$.
Repeating the previous argument based on
Lemma~\ref{lemma.sequence}, it follows that $l\in\{0,w^*\}$, whence
$w(r;\b)\longrightarrow 0$ as $r\to\infty$ by \eqref{boundA}. Since $p<2$, we infer from
\eqref{gradest} that
$$
\frac{r|f'(r;\b)|}{f(r;\b)}\leq
Crf(r;\b)^{(2-p)/p}=Cw(r;\b)^{(2-p)/p}\mathop{\longrightarrow}_{r\to\infty} 0.
$$
Consequently, there exists $r_{*}>R_1(\b)$ such that
$$
-\mu\leq\frac{rf'(r;\b)}{f(r;\b)}\leq0 \ \hbox{for} \ \hbox{any} \
r>r_{*},
$$
which implies that $w'(r;\b)=r^{\mu}(rf'(r;\b)+\mu f(r;\b))\geq0$
for $r>r_*$. This contradicts the fact that $w(r;\b)\to 0$ as
$r\to\infty$. Hence $R(\b)<\infty$ and assertion~(d) is proved.

Finally, if $R(\b)<\infty$, then $w(R(\b);\b)=0=w(0;\b)$, which
implies that $w(\cdot;\beta)$ has a maximum point in $(0,R(\b))$, hence $\b\in
A$, thereby proving that (d) implies (a).
\end{proof}

We are now ready to identify the set $A$.

\begin{proposition}\label{prop.zA}
The set $A$ is an open interval of the form $(\b^*,\infty)$ for some
$\b^*>0$.
\end{proposition}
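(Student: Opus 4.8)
The plan is to prove the three structural facts that together give $A=(\b^*,\infty)$: that $A$ is open, that $A$ is upward closed (hence an interval unbounded above), and that $\b^*:=\inf A$ lies in $(0,\infty)$. The backbone is the equivalence $\b\in A\iff \sup_{[0,R(\b))}w(\cdot;\b)<w^*$ from Lemma~\ref{lemma.caractA}(c), combined with the monotonicity in $\b$ from Proposition~\ref{prop.monot} and the gradient bounds of Section~\ref{sebpf}.

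\textbf{Openness.} Fix $\b_0\in A$. By Lemma~\ref{lemma.caractA}(b), $w'(\cdot;\b_0)>0$ on $(0,R_1(\b_0))$, $w'(R_1(\b_0);\b_0)=0$, and the maximum is nondegenerate, $w''(R_1(\b_0);\b_0)<0$, with $R_1(\b_0)<R(\b_0)$. Near $R_1(\b_0)$ one has $f(\cdot;\b_0)>0$ and $f'(\cdot;\b_0)<0$, so the system \eqref{ODE3} is nondegenerate there and $w$ is jointly smooth in $(r,\b)$ (differentiability in $\b$ being available through $\partial_\b f$, cf. Lemma~\ref{lemma.exp2}). The transversality $w''(R_1(\b_0);\b_0)<0$ then lets the implicit function theorem produce, for $\b$ near $\b_0$, a nondegenerate critical point $R_1(\b)$ of $w(\cdot;\b)$ depending in a $C^1$ manner on $\b$; continuity of $f$ keeps $f(\cdot;\b)>0$ on $[0,R_1(\b)]$, so $R_1(\b)<R(\b)$ and $\b\in A$. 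Thus $A$ is open.

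\textbf{Upward closedness.} For $\b\in A$, Lemma~\ref{lemma.caractA}(b) identifies $R_1(\b)$ as the location of the interior maximum, so $M(\b):=w(R_1(\b);\b)=\sup w(\cdot;\b)<w^*$; by the previous paragraph $M$ is $C^1$ on $A$. Since $w'(R_1(\b);\b)=0$, differentiating gives $M'(\b)=\partial_\b w(R_1(\b);\b)$, and this is $<0$ by Proposition~\ref{prop.monot} (applicable because $w'(\cdot;\b)>0$ on $(0,R_1(\b))$). Hence $M$ is strictly decreasing on $A$. Now suppose $\b_0\in A$ but $(\b_0,\infty)\not\subseteq A$, and set $\b_2:=\inf\{\b>\b_0:\b\notin A\}<\infty$; then $[\b_0,\b_2)\subseteq A$ while $\b_2\notin A$ because $A$ is open. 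For $\b\in[\b_0,\b_2)$ and every admissible $r$ we have $w(r;\b)\le M(\b)\le M(\b_0)<w^*$; letting $\b\to\b_2^-$ and using continuous dependence at each fixed $r<R(\b_2)$ yields $w(r;\b_2)\le M(\b_0)<w^*$, whence $\sup w(\cdot;\b_2)<w^*$ and $\b_2\in A$ by Lemma~\ref{lemma.caractA}(c), a contradiction. Therefore $(\b_0,\infty)\subseteq A$.

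\textbf{Positivity of $\b^*$.} For every $\b$, Lemma~\ref{lemma.prop} gives $|f'(\cdot;\b)|\le(\b\mu)^{2/p}$, hence $f(r;\b)\ge 1-(\b\mu)^{2/p}r$. At $r_\b:=\tfrac12(\b\mu)^{-2/p}$ this forces $f(r_\b;\b)\ge\tfrac12$ (so $r_\b<R(\b)$) and $w(r_\b;\b)\ge 2^{-(\mu+1)}(\b\mu)^{-2\mu/p}$, which exceeds $w^*$ for all sufficiently small $\b$. By Lemma~\ref{lemma.caractA}(c) such $\b$ are not in $A$, so $\b^*=\inf A>0$.

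\textbf{Finiteness of $\b^*$ (the main obstacle).} I expect the hardest point to be showing that all large $\b$ lie in $A$, which I would argue by contradiction. Assume $\b\in B\cup C$, so $R(\b)=\infty$ and Lemma~\ref{lemma.grad} applies, giving $|f'|\le C_4 f^{2/p}$ with $C_4$ depending only on $N,p$. Integrating this yields the lower bound $f(r;\b)\ge(1+cr)^{-\mu}$ with $c=(2-p)C_4/p$, so at a scale $\bar r(\b)\sim\b^{-1/p}\to0$ one gets $f(\bar r(\b);\b)\to1$. On the other hand, inserting $|f'|\le C_4 f^{2/p}\le C_4$ into the integrated equation \eqref{ODE4} shows that, as long as $f\ge\tfrac12$, the term $\b\mu f$ dominates and $|f'(r;\b)|\ge(\b\mu r/(4N))^{1/(p-1)}$; integrating once more forces $f$ below $\tfrac12$ already at $r=\bar r(\b)$, i.e. $f(\bar r(\b);\b)\le\tfrac12$. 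These two estimates are incompatible for large $\b$, so $\b\in A$ and $\b^*<\infty$. The delicate part is precisely this clash: Lemma~\ref{lemma.grad} pins $f$ near $1$ on an $O(1)$ scale, while the equation with large $\b$ forces $f$ to collapse on the much shorter scale $\b^{-1/p}$, and one must ensure the relevant estimates hold uniformly up to $\bar r(\b)$. Combining the four steps, $A$ is open, upward closed, with $\b^*=\inf A\in(0,\infty)$, hence $A=(\b^*,\infty)$.
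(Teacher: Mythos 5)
Your proposal is correct, and its overall skeleton (openness, upward closedness via the monotone maximum value $M(\b)=w(R_1(\b);\b)$ and a contradiction at $\b_2=\inf\{\b>\b_0:\b\notin A\}$, using Lemma~\ref{lemma.caractA} and Proposition~\ref{prop.monot}) coincides with the paper's proof; your positivity step is also the same computation the paper performs, though it places it in the proof of Proposition~\ref{prop.caractC}~(b) rather than here. The genuine divergence is the finiteness step, which is in fact where the paper and you part ways completely. The paper rescales, setting $f(r;\b)=F(r\b^{1/p};\b)$, observes that the absorption term acquires the factor $\b^{-1/2}$, and passes to the limit problem \eqref{probinfty}, whose solution is known (from \cite{Pe042} or \cite{IL2}) to vanish with negative slope at a finite point $S_0$; continuous dependence then forces $R(\b)<\infty$, i.e.\ $\b\in A$ by Lemma~\ref{lemma.caractA}~(d), for all large $\b$. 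You instead argue by contradiction inside the ODE itself: if $\b\in B\cup C$ then $R(\b)=\infty$, Lemma~\ref{lemma.grad} gives the $\b$-uniform bound $|f'|\le C_4 f^{2/p}$, hence $f(r;\b)\ge (1+cr)^{-\mu}$ with $c=c(N,p)$, while integrating \eqref{ODE4} twice on the region $\{f\ge 1/2\}$ (where $\b\mu f$ dominates $\b r|f'|\le \b r C_4$ and $|f'|^{p/2}\le C_4^{p/2}$ once $r\lesssim 1$ and $\b$ is large) forces $f$ below $1/2$ at the scale $\bar r(\b)\sim\b^{-1/p}$ — incompatible with the first bound since $\bar r(\b)\to 0$. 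I checked the quantitative clash and it does close: the $\b$'s cancel in the comparison $\b rC_4$ versus $\b\mu f$, and the drop $1-f(\bar r)\gtrsim (\b\mu)^{1/(p-1)}\bar r^{p/(p-1)}$ is of order one precisely at $\bar r\sim \b^{-1/p}$ (not coincidentally the paper's rescaling variable). Your route is more self-contained — it avoids invoking the literature on the limit problem — but leans crucially on the fact that $C_4$ in Lemma~\ref{lemma.grad} depends only on $N$ and $p$, which the paper guarantees; the paper's route is softer and outsources the hard work to known results. One small point to tighten in your upward-closedness step: you should justify that a fixed $r$ remains admissible ($r<R(\b)$) as $\b\nearrow\b_2$ before passing to the limit in $w(r;\b)\le M(\b_0)$; the paper does this by showing $R_1(\b)\to\infty$ as $\b\nearrow\b_2$, and it also follows from continuous dependence because $R(\b_2)=\infty$ makes $f(\cdot;\b)$ positive on $[0,r]$ for $\b$ close to $\b_2$.
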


\begin{proof}
For $\b>0$, we introduce the function $F(\cdot;\beta)$ defined by
$f(r;\b)=F(r\b^{1/p};\b)$ for $r\in[0,R(\b))$. Then, letting
$s=r\b^{1/p}$, we have $f'(r;\b)=\b^{1/p}F'(s;\b)$ and it follows
from \eqref{ODE2} that $F=F(\cdot;\beta)$ satisfies for $s\in (0,R(\b)\b^{1/p})$,
\begin{equation*}
\left\{\begin{array}{l}
\displaystyle{(|F'|^{p-2}F')'(s)+\frac{N-1}{s}(|F'|^{p-2}F')(s)+sF'(s)+\mu F(s)-\b^{-1/2}|F'(s)|^{p/2}=0,}\\
 \\
 F(0)=1, \ F'(0)=0.
 \end{array}\right.
\end{equation*}
The limit problem as $\b\to\infty$ reads
\begin{equation}\label{probinfty}
\left\{\begin{array}{l}
\displaystyle{(|h'|^{p-2}h')'(s)+\frac{N-1}{s}(|h'|^{p-2}h')(s)+sh'(s)+\mu h(s)=0,}\\
 \\
h(0)=1, \ h'(0)=0.
\end{array}\right.
\end{equation}
The limit problem \eqref{probinfty} is well-known and has already been thoroughly studied, see \cite[Theorem~2]{Pe042} or \cite[Proposition~2.11]{IL2} for instance. In particular, there is $S_0>0$ such that $h(S_0)=0$, $h'(S_0)<0$ and $h'(s)<0<h(s)$ for $s\in (0,S_0)$. By continuous dependence, a similar property is enjoyed by $F$ for $\beta$ large enough (with a possibly different point depending on $\beta$) from which we deduce that there is $\bar{\beta}>0$ large enough such that $(\bar{\beta},\infty)\subset A$.

It remains to show that $A$ is an open interval. It first readily follows from Lemma~\ref{lemma.caractA}~(b) and the continuous dependence with respect to $\beta$ that $A$ is open. Next, using once more Lemma~\ref{lemma.caractA}~(b), we infer from the implicit function theorem that the function $\beta\mapsto R_1(\beta)$ belongs to $C^1(A)$. Consequently, the function $m:\beta\mapsto w(R_1(\beta);\beta)$ belongs to $C^1(A)$ and it follows from Proposition~\ref{prop.monot} (with $r_0=R_1(\beta)$) and Lemma~\ref{lemma.caractA}~(b) that
$$
\frac{dm}{d\beta}(\beta) = w'(R_1(\beta);\beta)\ \frac{dR_1}{d\beta}(\beta) + \partial_\beta w(R_1(\beta);\beta) = \partial_\beta w(R_1(\beta);\beta) < 0\,, \quad \beta\in A\,.
$$
Recalling that $w(\cdot;\beta)$ reaches its maximum at $R_1(\beta)$ for $\beta\in A$, we have thus shown that
\begin{equation}
w(R_1(\beta_2);\beta_2) = \sup\limits_{r\in (0,R(\beta_2))} \{ w(r;\beta_2) \} < \sup\limits_{r\in (0,R(\beta_1))} \{ w(r;\beta_1) \} = w(R_1(\beta_1);\beta_1) < w^* \label{zz5}
\end{equation}
for $(\beta_1,\beta_2)\in A\times A$ satisfying $\beta_1<\beta_2$, the last inequality being a consequence of Lemma~\ref{lemma.caractA}~(c).

Consider now $\beta_1\in A$ and define $\beta_2:=\inf\{ \beta>\beta_1\ :\ \beta\not\in A \}$. Since $A$ is open, we have $\beta_2>\beta_1$ and $(\beta_1,\beta_2)\subset A$. Assume for contradiction that $\beta_2<\infty$. Since $A$ is open, this implies that $\beta_2\in B\cup C$ and in particular that $R(\beta_2)=\infty$ and $w'(r;\beta_2)>0$ for all $r>0$. Given any integer $k\ge 1$, continuous dependence then ensures that $w'(k;\beta)\longrightarrow w'(k;\beta_2)>0$ as $\beta\nearrow\beta_2$. Thus, there is $\delta_k>0$ such that $w'(k;\beta)>0$ for $\beta\in (\beta_2-\delta_k,\beta_2)\subset (\beta_1,\beta_2)$. Therefore, according to Lemma~\ref{lemma.caractA}~(b), $R_1(\beta)>k$ for $\beta\in (\beta_2-\delta_k,\beta_2)$ and thus
\begin{equation}
\lim\limits_{\beta\nearrow\beta_2} R_1(\beta)=\infty\,. \label{zz6}
\end{equation}
Now, for $r\in (0,\infty)$, we infer from \eqref{zz6} that $r\in (0,R_1(\beta))\subset (0,R(\beta))$ for $\beta<\beta_2$ close enough to $\beta_2$ which ensures that $w(r;\beta)\le w(R_1(\beta);\beta) = m(\beta)<m(\beta_1)<w^*$ by \eqref{zz5}. Since
$$
w(r;\beta_2) = \lim\limits_{\beta\nearrow\beta_2} w(r;\beta)
$$
by continuous dependence, we deduce that $w(r;\beta_2)\le m(\beta_1)<w^*$ for all $r>0$ which implies that $\beta_2\in A$ by Lemma~\ref{lemma.caractA}~(c) and a contradiction. We have thus established that $\beta_2=\infty$ from which Proposition~\ref{prop.zA} follows.
\end{proof}

\subsection{Characterization of the set $C$}\label{secsC}

We turn now our attention to the set $C$ and show that it is also
an open interval.

\begin{proposition}\label{prop.caractC}
\noindent (a) We have $\b\in C$ if and only if
\begin{equation}\label{caractC}
\sup\limits_{r\in(0,R(\b))}w(r;\b)>w^*.
\end{equation}

\noindent (b) The set $C$ is an open interval of the form
$(0,\b_{*})$ for some $\b_{*}>0$.
\end{proposition}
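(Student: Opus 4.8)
The plan is to prove (a) first, since the characterization it provides drives the proof of (b). The forward implication in (a) is immediate: if $\beta\in C$ then $w(\cdot;\beta)$ is increasing on $(0,\infty)$ with $w(r;\beta)\to\infty$, so $\sup_{r}w(r;\beta)=\infty>w^*$. For the converse, assume \eqref{caractC}. By Lemma~\ref{lemma.caractA}(c) the inequality $\sup_r w(r;\beta)>w^*$ already rules out $\beta\in A$, so $\beta\in B\cup C$ and it suffices to exclude $\beta\in B$. The key step is therefore to show that $\beta\in B$ forces $\lim_{r\to\infty}w(r;\beta)=w^*$. To this end I would note that, $w(\cdot;\beta)$ being increasing and bounded, its limit $\ell$ is finite and positive and $w'(\cdot;\beta)\in L^1(0,\infty)$, whence $\liminf_{r\to\infty}rw'(r;\beta)=0$; choosing $r_k\to\infty$ with $r_kw'(r_k;\beta)\to0$ and applying Lemma~\ref{lemma.sequence} to $h:=rw'(\cdot;\beta)$ yields $\varrho_k\to\infty$ with $\varrho_kw'(\varrho_k;\beta)\to0$ and $\varrho_k^2w''(\varrho_k;\beta)\to0$. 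Passing to the limit $k\to\infty$ in \eqref{newfunct} evaluated at $r=\varrho_k$ then leaves only the relation $\mu(\mu-N)\ell=(\mu\ell)^{2-p/2}$ already encountered in the proof of Lemma~\ref{lemma.caractA}, whose unique positive root is $\ell=w^*$ by \eqref{wstar}. Thus $\beta\in B$ gives $\sup_r w=w^*$, contradicting \eqref{caractC}, so $\beta\in C$.

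For (b) I would first check that $C$ is open. If $\beta_0\in C$, then by (a) there is $r_0$ with $w(r_0;\beta_0)>w^*$, and since $R(\beta_0)=\infty$ the profile $f(\cdot;\beta_0)$ is bounded away from $0$ on $[0,r_0]$; continuous dependence of the solution of \eqref{ODE2} on $\beta$ then gives $R(\beta)>r_0$ and $w(r_0;\beta)>w^*$ for $\beta$ near $\beta_0$, so $\beta\in C$ by (a). The heart of the matter is to show that $C$ is downward closed, i.e. that $\beta_2\in C$ and $0<\beta_1<\beta_2$ imply $\beta_1\in C$. Here I would crucially invoke Proposition~\ref{prop.zA}: since $A=(\beta^*,\infty)$ and $\beta_2\notin A$, we have $\beta_2\le\beta^*$, so the whole segment $[\beta_1,\beta_2]$ is contained in $(0,\beta^*]=B\cup C$, and $w'(\cdot;\beta)>0$ on all of $(0,\infty)$ for every $\beta$ in it. Proposition~\ref{prop.monot}, applied with an arbitrarily large $r_0$, then yields $\partial_\beta w(r;\beta)<0$ for all $r>0$ and all $\beta\in[\beta_1,\beta_2]$; integrating in $\beta$ gives $w(r;\beta_1)>w(r;\beta_2)$ for every $r>0$. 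Taking $r_0$ with $w(r_0;\beta_2)>w^*$ (possible since $\beta_2\in C$), we obtain $w(r_0;\beta_1)>w^*$, hence $\beta_1\in C$ by (a).

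It remains to prove $C\ne\emptyset$, which I would establish by analyzing small $\beta$. Fix $r_0>(w^*)^{1/\mu}$. The gradient bound $|f'(r;\beta)|\le(\beta\mu)^{2/p}$ of Lemma~\ref{lemma.prop} gives $f(r;\beta)\ge1-r(\beta\mu)^{2/p}$, which remains positive on $[0,r_0]$ once $\beta<(\mu r_0^{p/2})^{-1}$; this forces $R(\beta)>r_0$ and $f(r_0;\beta)\to1$ as $\beta\to0$. Consequently $w(r_0;\beta)=r_0^{\mu}f(r_0;\beta)\to r_0^{\mu}>w^*$, so $w(r_0;\beta)>w^*$ for small $\beta$ and, by (a), every sufficiently small $\beta>0$ lies in $C$. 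Combining openness, downward closure and this nonemptiness, and using $A=(\beta^*,\infty)$ once more to bound $\sup C=:\beta_*\le\beta^*<\infty$, I conclude that $C=(0,\beta_*)$ with $0<\beta_*<\infty$.

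The most delicate point is the downward-closure argument: applying the monotonicity Proposition~\ref{prop.monot} on a whole $\beta$-interval requires $w'(\cdot;\beta)$ to be positive on all of $(0,\infty)$ there, and this is exactly what the identification $A=(\beta^*,\infty)$ of Proposition~\ref{prop.zA} guarantees, by confining $[\beta_1,\beta_2]$ to $B\cup C$; without it one could not exclude the appearance of an interior maximum of $w(\cdot;\beta)$ for intermediate values of $\beta$. The limit computation for $\beta\in B$ is the other technical kernel, but it is routine once Lemma~\ref{lemma.sequence} is in hand.
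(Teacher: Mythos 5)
Your proof is correct and follows essentially the same route as the paper: part (a) combines Lemma~\ref{lemma.caractA} with the limit-identification argument via Lemma~\ref{lemma.sequence}, and part (b) obtains nonemptiness from the gradient bound of Lemma~\ref{lemma.prop} (the paper evaluates $w$ at $(\beta\mu)^{-2/p}/2$, you fix $r_0$ and let $\beta\to 0$ --- same estimate) and the interval structure from the $\beta$-monotonicity of Proposition~\ref{prop.monot}. Your explicit appeal to Proposition~\ref{prop.zA} to confine $[\beta_1,\beta_2]$ to $B\cup C$, so that Proposition~\ref{prop.monot} can legitimately be applied across the whole segment, is a careful justification of a step the paper compresses into ``follows directly from (a) and the monotonicity''.
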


\begin{proof}
\noindent (a) If $\b\in C$, the inequality \eqref{caractC} is an
immediate consequence of the definition of $C$. Conversely, if
$\b>0$ such that \eqref{caractC} holds true, then $\b\in B\cup C$ by
Lemma~\ref{lemma.caractA}. Therefore, $w(\cdot;\b)$ is an increasing function
in $(0,\infty)$. If $w(\cdot;\b)$ is bounded, then it has a finite limit as
$r\to\infty$, and by standard arguments this limit has to be $w^*$,
contradicting \eqref{caractC}. Thus, $w(\cdot;\b)$ is unbounded,
whence $\b\in C$.

\noindent (b) We first show that $C$ is nonempty. Given $\b>0$, it
follows from Lemma~\ref{lemma.prop} that
$0>f'(r;\b)\geq-(\b\mu)^{2/p}$ for all $r\in(0,R(\b))$, whence
\begin{equation}\label{part4}
1-(\b\mu)^{2/p}r\leq f(r;\b)<1, \quad r\in(0,R(\b)).
\end{equation}
This inequality implies in particular that
$R(\b)\geq(\b\mu)^{-2/p}$. Thus, $(\b\mu)^{-2/p}/2$ belongs to
$(0,R(\b))$ and we evaluate the first part of the inequality
\eqref{part4} at this point, getting
$$
w\left(\frac{(\b\mu)^{-2/p}}{2};\b\right)\geq\left(\frac{1}{2(\b\mu)^{2/p}}\right)^{p/(2-p)}\left(1-\frac{(\b\mu)^{2/p}}{2(\b\mu)^{2/p}}\right)
=\left(\frac{1}{2\b\mu}\right)^{2/(2-p)}.
$$
Consequently,
\begin{equation}\label{part5}
\sup\limits_{r\in(0,R(\b))}w(r;\b)\geq (2\b\mu)^{-2/(2-p)}>w^*,
\end{equation}
for $\b$ small enough, hence $\b\in C$. The fact that $C$ is an open
interval follows directly from Proposition~\eqref{prop.caractC}~(a) and the monotonicity with respect to $\beta$ stated in
Proposition~\ref{prop.monot}.
\end{proof}

As a further consequence of Lemma~\ref{lemma.caractA} and Proposition~\ref{prop.caractC}, we may identify $B$ and the behavior of $w(r;\beta)$ as $r\to \infty$ for $\beta\in B$.

\begin{corollary}\label{cor.zz}
The set $B$ is the closed interval $B=[\beta_*,\beta^*]$. Moreover, if $\beta\in B$ then $w(r;\beta)\to w^*$ as $r\to\infty$.
\end{corollary}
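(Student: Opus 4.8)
The plan is to read off the structure of $B$ from the descriptions of $A$ and $C$ obtained in Propositions~\ref{prop.zA} and \ref{prop.caractC}, and then to pin down the limit of $w(\cdot;\beta)$ for $\beta\in B$ by the same asymptotic argument already used for $A$ and $C$.

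\emph{Identification of $B$.} Since $A$, $B$, and $C$ are pairwise disjoint with $A\cup B\cup C=(0,\infty)$, we have $B=(0,\infty)\setminus(A\cup C)$. By Proposition~\ref{prop.zA}, $A=(\beta^*,\infty)$, while Proposition~\ref{prop.caractC}~(b) gives $C=(0,\beta_*)$. The disjointness of $A$ and $C$ forces $\beta_*\le\beta^*$: otherwise any $\beta\in(\beta^*,\beta_*)$ would belong to both $A$ and $C$. Consequently,
\[
B=(0,\infty)\setminus\big((0,\beta_*)\cup(\beta^*,\infty)\big)=[\beta_*,\beta^*],
\]
a nonempty closed interval containing both endpoints: indeed $\beta_*\notin C$ and $\beta^*\notin A$ because these sets are open, while $\beta_*\le\beta^*$ excludes $\beta_*\in A$ and $\beta^*\in C$.

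\emph{The limit $w^*$.} Fix $\beta\in B$. By definition of $B$, the function $w(\cdot;\beta)$ is increasing on $(0,\infty)$ and bounded; since $w(0;\beta)=0$, its limit $\ell:=\lim_{r\to\infty}w(r;\beta)$ is finite and strictly positive. I would then argue exactly as in the proof of Lemma~\ref{lemma.caractA}: monotonicity and boundedness give $w'(\cdot;\beta)\in L^1(0,\infty)$, hence a sequence $r_k\to\infty$ with $r_kw'(r_k;\beta)\to0$, and applying Lemma~\ref{lemma.sequence} to $h(r)=rw'(r;\beta)$ produces a sequence $\varrho_k\to\infty$ along which $\varrho_kw'(\varrho_k;\beta)\to0$ and $\varrho_k^2w''(\varrho_k;\beta)\to0$. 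Passing to the limit in \eqref{newfunct} evaluated at $r=\varrho_k$, every term carrying a factor $rw'$ or $r^2w''$ drops out and one is left with $\mu(\mu-N)\ell=(\mu\ell)^{2-p/2}$. Since $\ell>0$, this forces $\ell=(\mu-N)^{2/(2-p)}/\mu=w^*$, which proves the second assertion.

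Both steps are essentially bookkeeping on top of the earlier results, so I do not anticipate a genuine obstacle. The only point demanding a little care is the simultaneous control of $rw'$ and $r^2w''$ along a single sequence tending to infinity, which is precisely what Lemma~\ref{lemma.sequence} supplies and which was already the crux of the corresponding limiting arguments for the sets $A$ and $C$.
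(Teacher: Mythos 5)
Your proposal is correct, and the first assertion ($B=[\beta_*,\beta^*]$, closed and nonempty) is handled exactly as in the paper: pure bookkeeping from $A\cup B\cup C=(0,\infty)$, Proposition~\ref{prop.zA}, and Proposition~\ref{prop.caractC}. Where you diverge is the identification of the limit. The paper never touches the ODE at this stage: it sandwiches $\ell:=\lim_{r\to\infty}w(r;\beta)$ by citing the two characterizations already proved, namely $\beta\notin C$ together with Proposition~\ref{prop.caractC}~(a) gives $\ell\le w^*$, while $\beta\notin A$ together with Lemma~\ref{lemma.caractA}~(c) gives $\ell\ge w^*$. You instead re-run the limiting argument directly: $w'(\cdot;\beta)\in L^1(0,\infty)$, extraction of a sequence via Lemma~\ref{lemma.sequence}, passage to the limit in \eqref{newfunct} to get $\mu(\mu-N)\ell=(\mu\ell)^{2-p/2}$, hence $\ell\in\{0,w^*\}$, and exclusion of $\ell=0$ by monotonicity and $w(r;\beta)>0$. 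This is precisely the ``standard argument'' that sits inside the proofs of Lemma~\ref{lemma.caractA} and Proposition~\ref{prop.caractC}, so your route is sound; it is more self-contained (it needs only the set decomposition, positivity, and monotonicity, not the full strength of the equivalences), at the cost of duplicating work the paper has already encapsulated, whereas the paper's sandwich argument is a two-line citation. Both proofs are complete; the only point you should make explicit, and do, is that $\ell>0$ is what rules out the spurious root $\ell=0$ of the limiting algebraic equation.
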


\begin{proof}
The fact that $B=[\beta_*,\beta^*]$ readily follows from $A\cup B\cup C=(0,\infty)$, Proposition~\ref{prop.zA}, and Proposition~\ref{prop.caractC}. Next, according to the definition of $B$ and Proposition~\ref{prop.caractC}~(a), $w(\cdot;\beta)$ is increasing and bounded from above by $w^*$. Then
$$
\ell := \lim_{r\to\infty} w(r;\beta) = \sup_{r\in [0,\infty)}{\{ w(r;\beta) \}} \le w^*
$$
and, since $\beta\not\in A$, we infer from Lemma~\ref{lemma.caractA} that $\ell\ge w^*$, whence $\ell=w^*$.
\end{proof}

\section{An alternative formulation of \eqref{newfunct} when $\b\in B\cup C$}\label{seaf}

In this section we provide a deeper analysis of the differential
equation \eqref{newfunct}, which in the end will lead us to the
proof of Theorem~\ref{th1}. Consider $\b\in B\cup C$. Then $R(\beta)=\infty$,
$w'(r;\b)>0$ for all $r>0$, and
$$
\lim\limits_{r\to\infty}w(r;\beta)=\xi^*(\beta), \quad\mbox{ where }
\xi^*(\beta):=\left\{
\begin{array}{l}
w^* \quad \hbox{if} \ \b\in B,\\
\infty \quad \hbox{if} \ \b\in C.
\end{array}\right.
$$
Since $w(0;\beta)=0$, it follows that $w(\cdot;\beta)$ is a one-to-one mapping from
$[0,\infty)$ to $[0,\xi^*(\beta))$. Thus, we can define a new function
$\Phi(\cdot;\beta)$ by
\begin{equation}\label{def.Phi}
\Phi(\cdot;\beta):[0,\xi^*(\beta))\mapsto[0,\infty), \quad \Phi(w(r;\b);\beta)=rw'(r;\b), \
r\in[0,\infty).
\end{equation}
This change of function is very useful since it reduces the order of
\eqref{newfunct}. Indeed, observing that
\begin{equation*}
rw''(r;\b)+w'(r;\b)=\Phi'(w(r;\b);\beta)w'(r;\b), \quad
r^2w''(r;\b)=(\Phi(\Phi'-1))(w(r;\b);\beta)
\end{equation*}
and introducing the new independent variable $\xi:=w(r;\b)$,
\eqref{newfunct} reads
\begin{equation}\label{equation.Phi}
\begin{split}
(p-1)(\Phi\Phi')(\xi;\beta)&+(N-\mu p)\Phi(\xi;\beta)+\mu(\mu-N)\xi\\
&+|\Phi(\xi;\beta)-\mu\xi|^{2-p}\left(\b\Phi(\xi;\beta)-|\Phi(\xi;\beta)-\mu\xi|^{p/2}\right)=0
\end{split}
\end{equation}
for $\xi\in[0,\xi^*(\beta))$ with $\Phi(0;\beta)=0$. Note that we reduced
\eqref{newfunct} to a first-order differential equation. Also, since
$\b\in B\cup C$, it follows from \eqref{def.Phi} that
\begin{equation}
\Phi(\xi;\beta)>0 \quad\mbox{ for all }\quad \xi\in (0,\xi^*(\beta)).\label{zz4}
\end{equation}

\subsection{Behavior of $\Phi(\cdot;\beta)$ as $\xi\to 0$}\label{seabphx0}

\begin{lemma}\label{lemma.zPhi} For $\beta\in B\cup C$, we have $\Phi'(0;\beta)=\mu$ and, as $\xi\to 0$,
\begin{eqnarray}
\Phi'(\xi;\beta) & = & \mu-\frac{1}{p-1}\ \left( \frac{\mu\beta}{N} \right)^{1/(p-1)}\ \xi^{(2-p)/(p-1)}+o(\xi^{(2-p)/(p-1)}), \label{expan1.Phi}\\
\Phi(\xi;\beta) & = & \mu\xi-\left( \frac{\mu\beta}{N} \right)^{1/(p-1)}\ \xi^{1/(p-1)}+o(\xi^{1/(p-1)}). \label{expan2.Phi}
\end{eqnarray}
\end{lemma}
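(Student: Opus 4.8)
The plan is to derive the behavior of $\Phi(\cdot;\beta)$ as $\xi\to 0$ directly from the already-established expansion of $f(\cdot;\beta)$ near $r=0$, transported through the change of variables $\xi=w(r;\beta)=r^\mu f(r;\beta)$. First I would recall from \eqref{zz3} that $w(r;\beta)\sim\mu r^\mu$ and $w'(r;\beta)\sim\mu r^{\mu-1}$ as $r\to 0$. Since $\Phi(w(r;\beta);\beta)=rw'(r;\beta)$ by definition \eqref{def.Phi}, combining these gives $\Phi(\xi;\beta)\sim rw'(r;\beta)\sim\mu r^\mu\sim\mu\xi$, which immediately yields $\Phi'(0;\beta)=\mu$ once continuity at the origin is verified. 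The substance of the lemma is therefore obtaining the \emph{next} term in each expansion.

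To get the correction terms, I would compute $rw'(r;\beta)=r^\mu(rf'(r;\beta)+\mu f(r;\beta))$ using the fine expansions of $f$ and $f'$ from Lemma~\ref{lemma.exp1} and \eqref{expan3}. Writing $rf'(r)+\mu f(r)=\mu+\left(\mu-\frac{p-1}{p}\right)\cdots$ and tracking the leading correction, the dominant deviation from $\mu f(r)$ comes from the $r^{p/(p-1)}$ term in $f$ and the matching term in $rf'$. Concretely, I expect
\begin{equation*}
rw'(r;\beta)=\mu r^\mu - \left(\frac{\mu\beta}{N}\right)^{1/(p-1)}\ r^{\mu+p/(p-1)}\ (1+o(1))\quad\text{as }r\to 0,
\end{equation*}
while $\xi=w(r;\beta)=\mu r^\mu(1+o(1))$, so inverting $r$ as a function of $\xi$ via $r\sim(\xi/\mu)^{1/\mu}$ and substituting converts the exponent $\mu+p/(p-1)$ on $r$ into the exponent $1/(p-1)$ on $\xi$ (since $(\mu+p/(p-1))/\mu=1/(p-1)$ after using $\mu=p/(2-p)$). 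This should produce \eqref{expan2.Phi}. The expansion \eqref{expan1.Phi} for $\Phi'$ then follows either by differentiating \eqref{expan2.Phi} formally, or more safely by plugging \eqref{expan2.Phi} into the first-order equation \eqref{equation.Phi} and solving algebraically for $\Phi'(\xi;\beta)$ near $\xi=0$, matching the $\xi^{(2-p)/(p-1)}$ order.

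The main obstacle I anticipate is twofold: making the inversion $r\mapsto\xi$ rigorous to the required order, and checking that the exponent arithmetic lines up exactly. One must verify that the subleading terms in $\xi=\mu r^\mu(1+o(1))$ do not contaminate the claimed leading correction—this requires that the relative error in $r\sim(\xi/\mu)^{1/\mu}$ be genuinely $o(\xi^{(2-p)/(p-1)-1})$ at the level of $\Phi$, which it is because the next correction in $w$ is of order $r^{\mu+p/(p-1)}$, strictly higher than $r^\mu$. A cleaner alternative that sidesteps the inversion entirely is to work purely within \eqref{equation.Phi}: posit the ansatz $\Phi(\xi;\beta)=\mu\xi-a\,\xi^{1/(p-1)}+o(\xi^{1/(p-1)})$ with $a$ to be determined, substitute into \eqref{equation.Phi}, and use that $\Phi(\xi;\beta)-\mu\xi\sim -a\,\xi^{1/(p-1)}$ together with positivity \eqref{zz4} to balance the lowest-order terms; the coefficient $a$ is then forced to equal $(\mu\beta/N)^{1/(p-1)}$. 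I would present this second route since it is self-contained within Section~\ref{seaf}, and verify the exponent identities $(2-p)/(p-1)$ and $1/(p-1)$ against $\mu=p/(2-p)$ as the key consistency check.
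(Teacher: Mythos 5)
Your overall strategy --- transporting the expansions \eqref{expan1}--\eqref{expan3} of $f$ near $r=0$ through the change of variables \eqref{def.Phi} --- is the same as the paper's, but your execution of the first route contains a genuine error, and the route you say you would actually present (the ansatz matching) is not a proof. First, the bookkeeping: since $f(0)=1$, one has $\xi=w(r;\beta)\sim r^\mu$, \emph{not} $\mu r^\mu$ (what behaves like $\mu r^\mu$ is $rw'(r;\beta)$), and the correct expansion is
\begin{equation*}
rw'(r;\beta)=\mu r^{\mu}-\frac{\mu}{p}\left(\frac{\mu\beta}{N}\right)^{1/(p-1)} r^{\mu+p/(p-1)}+o\left(r^{\mu+p/(p-1)}\right),
\end{equation*}
the coefficient $(\mu\beta/N)^{1/(p-1)}$ you wrote being instead that of $rw'-\mu w=r^{\mu+1}f'$. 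More importantly, your claim that the subleading term of $w$ does not contaminate the leading correction of $\Phi$ is false: since $w(r;\beta)=r^\mu-\frac{p-1}{p}(\mu\beta/N)^{1/(p-1)}r^{\mu+p/(p-1)}+o(\cdot)$, replacing $\mu w(r;\beta)$ by $\mu r^\mu$ produces an error of order exactly $r^{\mu+p/(p-1)}\sim\xi^{1/(p-1)}$, i.e.\ precisely the order being computed, so the inversion enters the coefficient at leading order. The correct way to carry out your route is to combine the two corrections \emph{before} inverting, namely
\begin{equation*}
\Phi(\xi;\beta)-\mu\xi=rw'(r;\beta)-\mu w(r;\beta)=r^{\mu+1}f'(r;\beta)=-\left(\frac{\mu\beta}{N}\right)^{1/(p-1)}r^{\mu+p/(p-1)}+o\left(r^{\mu+p/(p-1)}\right),
\end{equation*}
after which the now harmless substitution $r\sim\xi^{1/\mu}$ gives \eqref{expan2.Phi}. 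As written, your computation lands on the right coefficient only because your two errors offset each other; the stated justification is invalid.

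Second, the ``cleaner alternative'' you prefer is a formal matching argument: positing $\Phi(\xi;\beta)=\mu\xi-a\,\xi^{1/(p-1)}+o(\xi^{1/(p-1)})$ and balancing terms in \eqref{equation.Phi} does force $a=(\mu\beta/N)^{1/(p-1)}$ (and even the sign of the correction), but it presupposes that $\Phi-\mu\xi$ admits a power-law expansion of this exact form, which is the very content of the lemma; a solution of the singular first-order equation \eqref{equation.Phi} could a priori behave differently near $\xi=0$, so matching alone proves nothing without an existence-of-expansion argument. The paper's proof is rigorous precisely because it never leaves the $r$-variable until the end: it computes $r^2w''$ from \eqref{newfunct}, forms $\Phi'(w(r))=(r^2w''+rw')/(rw')$, obtains \eqref{expan1.Phi} first, and then integrates to get \eqref{expan2.Phi}. (Your final step --- recovering \eqref{expan1.Phi} from \eqref{expan2.Phi} by solving \eqref{equation.Phi} algebraically for $\Phi'$ --- is sound; it is the derivation of \eqref{expan2.Phi} itself that must be repaired along the lines above.)
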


\begin{proof}
Set $f=f(\cdot;\beta)$, $w=w(\cdot;\beta)$, and $\Phi=\Phi(\cdot;\beta)$ to ease notations. According to
\eqref{expan1} and \eqref{expan2}, we have
$$
w(r)=r^{\mu}f(r)=r^{\mu}\left(1-\frac{p-1}{p}br^{p/(p-1)}+o(r^{p/(p-1)})\right) \quad\mbox{ with }\quad b:= \left( \frac{\mu\beta}{N} \right)^{1/(p-1)},
$$
and
\begin{equation*}
\begin{split}
rw'(r)&=r^{\mu+1}f'(r)+\mu
r^{\mu}f(r)=r^{\mu}\left(-br^{p/(p-1)}+o(r^{p/(p-1)})+\mu-\frac{(p-1)\mu}{p}br^{p/(p-1)}\right)\\
&=r^{\mu}\left(\mu-\frac{\mu}{p}br^{p/(p-1)}+o(r^{p/(p-1)})\right)
\end{split}
\end{equation*}
as $r\to 0$. We also have
\begin{equation*}
\begin{split}
rw'(r)-\mu w(r)&=r^{\mu}\left(\mu-\frac{\mu}{p}br^{p/(p-1)}-\mu+\frac{\mu(p-1)}{p}br^{p/(p-1)}+o(r^{p/(p-1)})\right)\\
&=-b r^{\mu/(p-1)}+o(r^{\mu/(p-1)})
\end{split}
\end{equation*}
as $r\to 0$. Inserting the previous expansions as $r\to 0$ in \eqref{newfunct}, we infer that
\begin{equation*}
\begin{split}
(p-1)r^2w''(r)&=\mu
r^{\mu}\left(-(N-1-2\mu(p-1))-\mu+N\right)+o(r^{\mu/(p-1)})\\&+\mu
r^{\mu/(p-1)}\left(\frac{b(N-1-2\mu(p-1))}{p}+\frac{(\mu-N)(p-1)b}{p} - \b
b^{2-p}\right)\\
&=2\frac{\mu(p-1)^2}{2-p}r^{\mu}-\mu
br^{\mu/(p-1)}\left(\frac{1}{p}+\frac{p-1}{2-p}\right)+o(r^{\mu/(p-1)}).
\end{split}
\end{equation*}
Then, as $r\to 0$,
\begin{equation*}
\begin{split}
\Phi'(w(r))&=\frac{rw''(r)+w'(r)}{w'(r)}=\frac{r^2w''(r)+rw'(r)}{rw'(r)}\\
&=\frac{\frac{2\mu(p-1)}{2-p}r^\mu-\frac{\mu
b}{p-1}\left(\frac{1}{p}+\frac{p-1}{2-p}\right)r^{\mu/(p-1)}+\mu
r^{\mu}-\frac{b\mu}{p}r^{\mu/(p-1)}+o(r^{\mu/(p-1)})}{\mu
r^{\mu}\left(1-\frac{b}{p}r^{p/(p-1)}+o(r^{p/(p-1)})\right)}\\
&=\frac{\left(1+\frac{2(p-1)}{2-p}\right)-br^{p/(p-1)}\left(\frac{1}{p}+\frac{1}{p(p-1)}+\frac{1}{2-p}\right)+o(r^{p/(p-1)})}{1-\frac{b}{p}r^{p/(p-1)}+o(r^{p/(p-1)})}\\
&=\left(\mu-\frac{b}{(p-1)(2-p)}r^{p/(p-1)}+o(r^{p/(p-1)})\right)\left(1+\frac{b}{p}r^{p/(p-1)}+o(r^{p/(p-1)})\right)\\
&=\mu-\frac{b}{p-1}r^{p/(p-1)}+o(r^{p/(p-1)}).
\end{split}
\end{equation*}
Since $w(r)=r^{\mu}+o(r^{\mu})$  as $r\to 0$, we end up with
$$
\Phi'(w(r))=\mu-\frac{b}{p-1}w(r)^{(2-p)/(p-1)}+o(w(r)^{(2-p)/(p-1)})
$$
as $r\to 0$, whence \eqref{expan1.Phi}. Integrating \eqref{expan1.Phi} gives \eqref{expan2.Phi}.
\end{proof}

Using these expansions as $\xi\to 0$, we are able to prove the
following upper bound.

\begin{lemma}\label{lemma.estPhi}
For $\beta\in B\cup C$, we have
\begin{equation}\label{estimate.Phi1}
0<\Phi(\xi;\beta)<\mu\xi \quad \hbox{for} \ \hbox{any} \ \xi\in(0,\xi^*(\beta)).
\end{equation}
\end{lemma}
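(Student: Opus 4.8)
The lower bound $\Phi(\xi;\beta)>0$ is already recorded in \eqref{zz4}, so the whole content of the lemma is the upper bound, which amounts to showing that the function $\xi\mapsto \Phi(\xi;\beta)-\mu\xi$ stays negative on all of $(0,\xi^*(\beta))$. The plan is to establish this negativity near $\xi=0$ from the expansion already in hand and then to propagate it across the interval by a continuation argument based directly on \eqref{equation.Phi}. Set $W(\xi):=\Phi(\xi;\beta)-\mu\xi$. By \eqref{expan2.Phi}, and because $1/(p-1)>1$ since $p<2$, one has $W(\xi)=-(\mu\beta/N)^{1/(p-1)}\xi^{1/(p-1)}+o(\xi^{1/(p-1)})<0$ for $\xi$ small, so that $\Phi(\xi;\beta)<\mu\xi$ in a right neighborhood of $0$. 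Suppose for contradiction that $W$ vanishes somewhere in $(0,\xi^*(\beta))$ and let $\xi_0$ be its smallest zero; then $W<0$ on $(0,\xi_0)$ and $W(\xi_0)=0$, so that $\Phi(\xi_0;\beta)=\mu\xi_0>0$.

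The key step is to recast \eqref{equation.Phi} as an equation for $W$ that isolates $W'$. Substituting $\Phi=W+\mu\xi$ and $\Phi'=W'+\mu$ into \eqref{equation.Phi} and using the identity $(p-1)\mu+N-\mu p=N-\mu$, all the terms that are independent of $W$ cancel (this reflects the fact that $\xi\mapsto\mu\xi$ is itself a solution of \eqref{equation.Phi}), and one is left with
\[
(p-1)\,\Phi(\xi;\beta)\,W'(\xi)=(\mu-N)W(\xi)-|W(\xi)|^{2-p}\bigl(\beta\,\Phi(\xi;\beta)-|W(\xi)|^{p/2}\bigr).
\]
I would then examine the right-hand side on a small left neighborhood $(\xi_0-\varepsilon,\xi_0)$. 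There $W<0$, and since $\mu>N$ by \eqref{zexp} the term $(\mu-N)W$ is strictly negative. Moreover $\Phi(\cdot;\beta)$ stays close to $\mu\xi_0>0$ while $|W|^{p/2}\to 0$, so that $\beta\Phi-|W|^{p/2}>0$ and the second term is also strictly negative. As $\Phi>0$ on this neighborhood, it follows that $W'(\xi)<0$ throughout $(\xi_0-\varepsilon,\xi_0)$. Integrating from $\xi$ to $\xi_0$ then yields $W(\xi)>W(\xi_0)=0$ for every $\xi\in(\xi_0-\varepsilon,\xi_0)$, contradicting $W<0$ there. Hence $W$ never vanishes and $\Phi(\xi;\beta)<\mu\xi$ on all of $(0,\xi^*(\beta))$.

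The one delicate point to keep in mind is that the nonlinearity $|W|^{2-p}$ appearing in \eqref{equation.Phi} is merely H\"older continuous at $W=0$ (because $2-p\in(0,1)$), so one cannot simply invoke uniqueness of solutions through the crossing point $\xi_0$ in order to conclude $W\equiv 0$. The sign analysis of $W'$ above is designed precisely to bypass this degeneracy: it relies only on the strict positivity of $\Phi(\xi_0;\beta)$ — guaranteed by \eqref{zz4} — together with $\mu>N$, both of which persist at $W=0$, and never requires dividing by $|W|^{2-p}$. This is the step I expect to be the main obstacle, in the sense that it is where the degenerate nature of the equation must be handled carefully rather than glossed over.

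As a sanity check, the same conclusion can be read off directly from the correspondence $W(\xi)=rw'(r;\beta)-\mu w(r;\beta)=r^{\mu+1}f'(r;\beta)$ with $\xi=w(r;\beta)$, since $f'(\cdot;\beta)<0$ on $(0,\infty)$ by Lemma~\ref{lemma.prop}; however, the self-contained argument carried out on \eqref{equation.Phi} is the one I would present, as it stays within the $\Phi$-formulation and matches the style of the barrier constructions used in the later sections.
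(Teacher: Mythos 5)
Your proof is correct, but it settles the crux of the lemma by a different mechanism than the paper. The paper's own proof follows your first two steps verbatim (negativity of $\Phi-\mu\xi$ near $\xi=0$ via \eqref{expan2.Phi}, then the first crossing point $\xi_0$), but at $\xi_0$ it argues by uniqueness: it observes that $\Phi_l(\xi)=\mu\xi$ solves \eqref{equation.Phi}, and concludes from $\Phi(\xi_0)=\Phi_l(\xi_0)\neq 0$ that the two solutions of this first-order equation must coincide, contradicting $\Phi<\mu\xi$ on $(0,\xi_0)$. You instead derive the equation satisfied by $W=\Phi-\mu\xi$ and exclude the crossing by a sign analysis of $W'$ on a left neighborhood of $\xi_0$. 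Your route is arguably the more careful one: as you point out, $|\Phi-\mu\xi|^{2-p}$ is only H\"older continuous (exponent $2-p\in(0,1)$) at the touching line, so Cauchy--Lipschitz uniqueness does not apply directly at $(\xi_0,\mu\xi_0)$; for model equations of the type $y'=-c|y|^{2-p}$ distinct solutions genuinely can merge into $y\equiv 0$, so the paper's uniqueness step is precisely the point that deserves justification, and your monotonicity argument supplies it in the only configuration that matters (a solution lying below the line cannot reach it, since $W<0$ forces $W'<0$ nearby). Incidentally, your closing ``sanity check'' is itself a complete and even shorter proof: with $\xi=w(r;\beta)$ one has $\Phi(\xi;\beta)-\mu\xi=rw'(r;\beta)-\mu w(r;\beta)=r^{\mu+1}f'(r;\beta)<0$ for $r>0$ by Lemma~\ref{lemma.prop}, so either of your two arguments would serve as a sound replacement for the paper's.
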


\begin{proof}
It follows from \eqref{expan2.Phi} that there exists $\delta>0$ such
that $\Phi(\xi;\beta)<\mu\xi$ for $\xi\in(0,\delta)$. Setting
$$
\xi_0:=\inf\{\xi\in(0,\xi^*(\beta)): \Phi(\xi;\beta)=\mu\xi\},
$$
we have just shown that $\xi_0>0$ and $\Phi(\xi)<\mu\xi$ for
$\xi\in(0,\xi_0)$. Assume for contradiction that $\xi_0<\xi^*(\beta)$. Then
$\Phi(\xi_0;\beta)=\mu\xi_0$. Setting $\Phi_{l}(\xi)=\mu\xi$ for $\xi\in [0,\xi^*(\beta))$, it is easy
to check that $\Phi_{l}$ solves \eqref{equation.Phi}.
Since $\Phi(\xi_0)=\Phi_{l}(\xi_0)\neq0$ and both $\Phi$ and
$\Phi_l$ solve \eqref{equation.Phi}, we conclude that
$\Phi\equiv\Phi_l$, which contradicts the definition of $\xi_0$.
Consequently, $\xi_0=\xi^*(\beta)$ and \eqref{estimate.Phi1} holds true.
\end{proof}

\subsection{Monotonicity with respect to $\b$}\label{seamrb}

We have the
following ordering property.

\begin{lemma}\label{lemma.monotPhi}
Given $0<\b_1<\b_2$, we have $\Phi(\xi;\b_2)<\Phi(\xi;\b_1)$ for
$\xi\in(0,\min\{\xi^*(\beta_1),\xi^*(\beta_2)\})$.
\end{lemma}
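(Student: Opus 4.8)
The plan is to establish the ordering $\Phi(\cdot;\beta_2) < \Phi(\cdot;\beta_1)$ by combining the local behavior near $\xi = 0$ from Lemma~\ref{lemma.zPhi} with a comparison argument based on the differential equation \eqref{equation.Phi}. First I would observe that the leading-order expansions \eqref{expan1.Phi} and \eqref{expan2.Phi} are the \emph{same} at first order for all $\beta$, since the coefficient $b = (\mu\beta/N)^{1/(p-1)}$ multiplies the second-order term $\xi^{1/(p-1)}$; because $b$ is increasing in $\beta$ and appears with a negative sign, we have $\Phi(\xi;\beta_2) < \Phi(\xi;\beta_1)$ for $\xi$ in a right neighborhood of $0$. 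This gives the desired strict inequality initially and reduces the problem to showing it cannot be lost.

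Next I would set $\xi_0 := \inf\{\xi \in (0,\min\{\xi^*(\beta_1),\xi^*(\beta_2)\}) : \Phi(\xi;\beta_1) = \Phi(\xi;\beta_2)\}$ and argue by contradiction that $\xi_0 < \min\{\xi^*(\beta_1),\xi^*(\beta_2)\}$. At such a first crossing point we would have $\Phi(\xi_0;\beta_1) = \Phi(\xi_0;\beta_2) =: \Phi_0 > 0$ by \eqref{zz4}, together with the ordering of derivatives $\Phi'(\xi_0;\beta_1) \le \Phi'(\xi_0;\beta_2)$ forced by the fact that $\Phi(\cdot;\beta_2)$ crosses $\Phi(\cdot;\beta_1)$ from below. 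The key is then to evaluate the equation \eqref{equation.Phi} at $\xi_0$ for both $\beta_1$ and $\beta_2$ and subtract: all the terms depending on $\Phi$ and $\xi$ alone coincide, and the only genuine difference comes from the $\beta$-term $|\Phi_0 - \mu\xi_0|^{2-p}\,\beta\,\Phi_0$. Since $\Phi_0 > 0$ and $|\Phi_0 - \mu\xi_0|^{2-p} > 0$ (using $\Phi_0 < \mu\xi_0$ from Lemma~\ref{lemma.estPhi}, so this factor is nonzero), the larger $\beta_2$ produces a strictly larger contribution, which upon rearrangement forces a strict inequality on $(p-1)\Phi_0(\Phi'(\xi_0;\beta_2) - \Phi'(\xi_0;\beta_1))$ of the \emph{wrong} sign relative to $\Phi'(\xi_0;\beta_1) \le \Phi'(\xi_0;\beta_2)$.

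More precisely, subtracting the two instances of \eqref{equation.Phi} at $\xi_0$ yields
\begin{equation*}
(p-1)\Phi_0\bigl(\Phi'(\xi_0;\beta_2) - \Phi'(\xi_0;\beta_1)\bigr) = -(\beta_2 - \beta_1)\,|\Phi_0 - \mu\xi_0|^{2-p}\,\Phi_0 < 0,
\end{equation*}
so $\Phi'(\xi_0;\beta_2) < \Phi'(\xi_0;\beta_1)$, contradicting the crossing-from-below condition $\Phi'(\xi_0;\beta_2) \ge \Phi'(\xi_0;\beta_1)$. This contradiction shows $\xi_0 = \min\{\xi^*(\beta_1),\xi^*(\beta_2)\}$, establishing the strict ordering throughout the common interval.

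The main obstacle I anticipate is handling the crossing condition rigorously: at a first touching point the inequality $\Phi'(\xi_0;\beta_2) \ge \Phi'(\xi_0;\beta_1)$ must be justified carefully (the two curves agree at $\xi_0$ and $\Phi(\cdot;\beta_2) < \Phi(\cdot;\beta_1)$ just to the left, so the difference $\Phi(\cdot;\beta_1) - \Phi(\cdot;\beta_2)$ is positive on $(0,\xi_0)$ and vanishes at $\xi_0$, forcing its derivative at $\xi_0$ to be $\le 0$). A subtle point is that one must verify $\Phi_0 - \mu\xi_0 \ne 0$ so that the factor $|\Phi_0 - \mu\xi_0|^{2-p}$ is strictly positive; this is precisely guaranteed by the strict inequality $\Phi(\xi_0;\beta) < \mu\xi_0$ from Lemma~\ref{lemma.estPhi}, which is why that lemma is established first. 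With these two facts in hand the algebraic comparison is clean and the contradiction is immediate.
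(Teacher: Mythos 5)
Your proposal is correct and follows essentially the same route as the paper: establish the strict ordering near $\xi=0$ from the expansion \eqref{expan2.Phi}, take the first crossing point $\xi_0$, and evaluate \eqref{equation.Phi} for both parameters there, using $0<\Phi(\xi_0;\beta_i)<\mu\xi_0$ from Lemma~\ref{lemma.estPhi} so that the factor $|\Phi_0-\mu\xi_0|^{2-p}$ is strictly positive, which makes the $\beta$-difference term produce a derivative inequality incompatible with the crossing condition. The paper's algebra (writing the equation for $\Phi_1$ and substituting the equation for $\Phi_2$) reduces, after dividing by $\Phi_1(\xi_0)>0$, to exactly your subtracted identity, so the two arguments coincide.
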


\begin{proof}
To simplify notations, define $\Phi_i:=\Phi(\cdot;\b_i)$, $i=1,2$.
It follows from \eqref{expan2.Phi} that, as $\xi\to 0$,
\begin{equation*}
\begin{split}
\Phi_2(\xi)-\Phi_1(\xi)&=\mu\xi-\left(\frac{\mu\b_2}{N}\right)^{1/(p-1)}\xi^{1/(p-1)}-\mu\xi+\left(\frac{\mu\b_1}{N}\right)^{1/(p-1)}\xi^{1/(p-1)}+o(\xi^{1/(p-1)})\\
&=\left[\left(\frac{\mu\b_1}{N}\right)^{1/(p-1)}-\left(\frac{\mu\b_2}{N}\right)^{1/(p-1)}\right]\xi^{1/(p-1)}+o(\xi^{1/(p-1)}),
\end{split}
\end{equation*}
hence $\Phi_2(\xi)<\Phi_1(\xi)$ in a right neighborhood of $\xi=0$.
Introducing
$$
\xi_0:=\inf\{\xi\in(0,\min\{\xi^*(\beta_1),\xi^*(\beta_2)\})\ :\
\Phi_1(\xi)=\Phi_2(\xi)\},
$$
we have thus shown that $\xi_0>0$ and $\Phi_2(\xi)<\Phi_1(\xi)$ for
$\xi\in(0,\xi_0)$. Assume now for contradiction that
$\xi_0<\min\{\xi^*(\beta_1),\xi^*(\beta_2)\}$. Then $\Phi_1(\xi_0)=\Phi_2(\xi_0)$, $\Phi_1'(\xi_0)\leq\Phi_2'(\xi_0)$, and we infer from \eqref{equation.Phi} that
\begin{equation*}
\begin{split}
0&=(p-1)\Phi_1(\xi_0)\Phi_{1}'(\xi_0) + (N-\mu p)\Phi_1(\xi_0)+\mu(\mu-N)\xi_0\\
&+\b_1\Phi_1(\xi_0)|\Phi_1(\xi_0)-\mu\xi_0|^{2-p}-|\Phi_1(\xi_0)-\mu\xi_0|^{(4-p)/2}\\
&=(p-1)\left[(\Phi_1\Phi_1')(\xi_0)-(\Phi_2\Phi_2')(\xi_0)\right]+(p-1)(\Phi_2\Phi_2')(\xi_0)\\
&+ (N-\mu p) \Phi_2(\xi_0)+\mu(\mu-N) \xi_0+\b_2\Phi_2(\xi_0)|\Phi_2(\xi_0)-\mu\xi_0|^{2-p}\\
&+(\b_1-\b_2)\Phi_1(\xi_0)|\Phi_1(\xi_0)-\mu\xi_0|^{2-p}-|\Phi_2(\xi_0)-\mu\xi_0|^{(4-p)/2}\\
&=\Phi_1(\xi_0)\left[(p-1)(\Phi_1-\Phi_2)'(\xi_0)+(\b_1-\b_2)|\Phi_1(\xi_0) - \mu \xi_0|^{2-p}\right].
\end{split}
\end{equation*}
Since both terms in the right-hand side of the last equality above
are nonpositive and $0<\Phi_1(\xi_0)<\mu\xi_0$ by
\eqref{estimate.Phi1}, we end up with
$(\Phi_1-\Phi_2)'(\xi_0)=0=\b_1-\b_2$, and a contradiction. Consequently,
$\xi_0=\min\{\xi^*(\beta_1),\xi^*(\beta_2)\}$.
\end{proof}

With these preliminaries and general properties of $\Phi(\cdot;\beta)$, we are
now ready to separate the study in two cases, depending on
whether $\b\in B$ and $\b\in C$.

\subsection{Asymptotic behavior as $\b\in C$}\label{seabC}

For $\beta\in C$, the upper bound \eqref{estimate.Phi1} turns out to overestimate the growth of $\Phi(\cdot;\beta)$ for large values of $\xi$. A finer upper bound is shown in the next result which is also non-optimal as we shall see below but paves the way to the optimal growth rate established in Lemma~\ref{lemma.super}.

\begin{lemma}\label{lemma.boundC}
Consider $\b\in C$ and some positive constant $K$ such that
\begin{equation}\label{part6}
K\geq \max{\left\{ \frac{\mu^{p/2}}{\b} , \frac{p\mu-N}{\mu-N} \right\}}.
\end{equation}
Then
\begin{equation}\label{ineq.C}
\Phi(\xi;\beta)\leq K\xi^{p/2} \quad \hbox{for} \ \xi\in[0,\infty).
\end{equation}
\end{lemma}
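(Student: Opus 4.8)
The plan is to establish the bound $\Phi(\xi;\beta)\leq K\xi^{p/2}$ by a continuity/barrier argument, showing that the set where $\Phi$ exceeds the candidate upper barrier $\Psi(\xi):=K\xi^{p/2}$ is empty. First I would check that the inequality holds for small $\xi>0$. Indeed, from the expansion \eqref{expan2.Phi} we have $\Phi(\xi;\beta)=\mu\xi+o(\xi)$ as $\xi\to 0$, while $K\xi^{p/2}$ blows up relative to $\xi$ like $\xi^{p/2-1}\to\infty$ since $p<2$. Hence $\Phi(\xi;\beta)<K\xi^{p/2}$ in a right neighborhood of $\xi=0$, and in particular the inequality is strict there.

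Next, suppose for contradiction that \eqref{ineq.C} fails somewhere, and let $\xi_0:=\inf\{\xi>0:\Phi(\xi;\beta)=K\xi^{p/2}\}$. By the previous step $\xi_0>0$, and by continuity $\Phi(\xi_0;\beta)=K\xi_0^{p/2}$ with $\Phi(\xi;\beta)<K\xi^{p/2}$ for $\xi\in(0,\xi_0)$. This forces the ordering of derivatives at the contact point: $\Phi'(\xi_0;\beta)\geq (K p/2)\xi_0^{p/2-1}$. The core of the argument is then to feed the equality $\Phi(\xi_0;\beta)=K\xi_0^{p/2}$ into the first-order equation \eqref{equation.Phi} and derive a contradiction with this derivative inequality. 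Concretely, I would solve \eqref{equation.Phi} for the leading product $(p-1)(\Phi\Phi')(\xi_0;\beta)$,
$$
(p-1)(\Phi\Phi')(\xi_0;\beta) = (\mu p - N)\Phi(\xi_0;\beta) - \mu(\mu-N)\xi_0 - |\Phi(\xi_0;\beta)-\mu\xi_0|^{2-p}\left(\beta\Phi(\xi_0;\beta) - |\Phi(\xi_0;\beta)-\mu\xi_0|^{p/2}\right),
$$
substitute $\Phi(\xi_0;\beta)=K\xi_0^{p/2}$, and show that the resulting upper bound on $\Phi'(\xi_0;\beta)$ is strictly smaller than $(Kp/2)\xi_0^{p/2-1}$, contradicting the contact-point derivative inequality.

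The main obstacle will be controlling the nonlinear term $|\Phi-\mu\xi|^{2-p}(\beta\Phi-|\Phi-\mu\xi|^{p/2})$ at the contact point, whose sign and size are not immediately transparent. Here the two hypotheses in \eqref{part6} should play distinct roles: the condition $K\geq\mu^{p/2}/\beta$ (equivalently $\beta K\geq \mu^{p/2}$) is what guarantees that the damping contribution $\beta\Phi|\Phi-\mu\xi|^{2-p}$ dominates the competing $|\Phi-\mu\xi|^{(4-p)/2}$ term once $\xi$ is large enough, keeping that whole bracket favorably signed; while the condition $K\geq(p\mu-N)/(\mu-N)$ ensures that the linear combination $(\mu p-N)K\xi_0^{p/2}-\mu(\mu-N)\xi_0$ stays below the threshold set by $(p-1)\Phi\cdot(Kp/2)\xi_0^{p/2-1}=(p-1)(Kp/2)K\xi_0^{p-1}$ for all $\xi_0$ in the relevant range. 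I would carefully track which term is the dominant power of $\xi_0$ — the damping term scales like $\xi_0^{(4-p)/2}\cdot\xi_0^{\text{correction}}$ through the factor $|\Phi-\mu\xi|^{2-p}$, so estimating $|\Phi(\xi_0;\beta)-\mu\xi_0|$ from below using \eqref{estimate.Phi1} (which gives $\Phi<\mu\xi$, so this quantity equals $\mu\xi_0-K\xi_0^{p/2}>0$ for large $\xi_0$) will be the delicate bookkeeping step. The likely clean route is to verify that with $\Phi(\xi_0;\beta)=K\xi_0^{p/2}$ the full expression for $(p-1)\Phi\Phi'$ becomes nonpositive or suitably bounded precisely when both inequalities in \eqref{part6} hold, thereby contradicting $\Phi'(\xi_0;\beta)\geq(Kp/2)\xi_0^{p/2-1}>0$.
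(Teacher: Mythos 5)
Your strategy---establish the bound near $\xi=0$, take the first contact point $\xi_0$ with the barrier $\Phi_u(\xi):=K\xi^{p/2}$, and derive a contradiction from \eqref{equation.Phi}---is the pointwise form of the comparison argument the paper uses, and your opening step is correct (the paper gets the inequality on a whole initial interval directly from $\Phi(\xi;\beta)<\mu\xi$, Lemma~\ref{lemma.estPhi}, which also tells you that any contact point must satisfy $K\xi_0^{p/2}=\Phi(\xi_0;\beta)<\mu\xi_0$, i.e.\ $\xi_0>(K/\mu)^{2/(2-p)}$). The genuine gap is in the core step: the claim that at the contact point the equation forces $\Phi'(\xi_0;\beta)<(Kp/2)\xi_0^{(p-2)/2}$ is \emph{false} when $\xi_0$ is close to $(K/\mu)^{2/(2-p)}$. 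Indeed, since $\Phi(\xi_0;\beta)=\Phi_u(\xi_0)$, the contradiction you want is exactly $\mathcal{L}\Phi_u(\xi_0)>0$, where $\mathcal{L}$ is the operator in \eqref{equation.Phi}. Writing $K\xi_0^{(p-2)/2}=\mu/\lambda$ with $\lambda>1$, an exact evaluation gives
\begin{equation*}
\frac{\mathcal{L}\Phi_u(\xi_0)}{K\xi_0^{p/2}} = \frac{p(p-1)\mu}{2\lambda} + (\mu-N)\lambda - (p\mu-N) + \frac{\left(\mu\xi_0-K\xi_0^{p/2}\right)^{2-p}}{K}\left(K\beta - \left[\mu\left(1-\lambda^{-1}\right)\right]^{p/2}\right).
\end{equation*}
The last term---the one you expect to dominate thanks to $K\beta\geq\mu^{p/2}$---is indeed nonnegative at a contact point, but it is multiplied by $(\mu\xi_0-K\xi_0^{p/2})^{2-p}$, which tends to $0$ as $\lambda\to 1^{+}$, while the three remaining terms tend to $\tfrac{p(p-1)\mu}{2}-(p-1)\mu=(p-1)\mu(\tfrac{p}{2}-1)<0$ because $p<2$. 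Hence $\mathcal{L}\Phi_u(\xi_0)<0$ for contact points near $(K/\mu)^{2/(2-p)}$, and there the equation yields $\Phi'(\xi_0;\beta)>(Kp/2)\xi_0^{(p-2)/2}$, perfectly consistent with a first crossing: no contradiction is available. Relatedly, the role you assign to $K\geq(p\mu-N)/(\mu-N)$ is not tenable: the inequality $(p\mu-N)K\xi_0^{p/2}-\mu(\mu-N)\xi_0<\tfrac{p(p-1)}{2}K^2\xi_0^{p-1}$ fails at $\xi_0=(K/\mu)^{2/(2-p)}$ (it would require $p\geq2$), and enlarging $K$ cannot help since the ratio of the two sides at that point is independent of $K$. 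In the paper that hypothesis serves a different purpose, namely to make the two constraints defining $\xi_\beta$ in \eqref{part7} compatible.

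For comparison, the paper avoids a global contact argument: it picks $\xi_\beta$ via \eqref{part7}, handles $(0,\xi_\beta]$ by the elementary bound $\Phi(\xi;\beta)<\mu\xi\leq K\xi^{p/2}$ from \eqref{estimate.Phi1}, and runs the comparison only on $[\xi_\beta,\infty)$. You should be aware, however, that the transition region $K\xi^{p/2}\approx\mu\xi$ still lies inside $[\xi_\beta,\infty)$, and the paper's verification of $\mathcal{L}\Phi_u\geq0$ there invokes the inequality $\mu(\mu-N)\xi-(p\mu-N)\xi^{p/2}\geq0$, whereas the term actually present in $\mathcal{L}\Phi_u$ is $K(N-p\mu)\xi^{p/2}$, carrying the factor $K>1$; so the published argument is itself delicate at exactly the point where your plan breaks down. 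A genuine repair needs an extra idea rather than bookkeeping: for instance a barrier with an additive constant, as in Lemma~\ref{lemma.super}, which can be kept uniformly below $\tfrac{\mu(\mu-N)}{p\mu-N}\,\xi$ (hence away from $\mu\xi$, where the troublesome degeneracy $(\mu\xi-\Phi_u)^{2-p}\to0$ occurs), or some a priori quantitative control keeping $\Phi(\cdot;\beta)$ away from $\mu\xi$ in the transition region. As proposed, your contact-point argument with the pure barrier $K\xi^{p/2}$ cannot be completed.
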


\begin{proof}
Owing to \eqref{part6}, we have $K^{2/p}>\mu\b^{-2/p}$ and thus
$K^{(p-2)/p}\b^{-2/p}<K/\mu$. There is therefore some $\xi_{\b}>0$
such that
\begin{equation}\label{part7}
\max\left\{K^{(p-2)/p}\b^{-2/p},\frac{p\mu-N}{\mu(\mu-N)}\right\}\leq\xi_{\b}^{(2-p)/2}\leq\frac{K}{\mu}.
\end{equation}
We define $\Phi_{u}(\xi):=K\xi^{p/2}$ for $\xi\geq0$ and denote the
differential operator applied to $\Phi(\cdot;\beta)$ in \eqref{equation.Phi} by
$\cl$. Then, for $\xi\geq0$, we have
\begin{equation*}
\begin{split}
\cl\Phi_{u}(\xi)&=\frac{K^2 p(p-1)}{2}\xi^{p-1}+K (N-p\mu)\xi^{p/2}+\mu(\mu-N)\xi\\
&+|K\xi^{p/2}-\mu\xi|^{2-p}\left(K\b\xi^{p/2}-|K\xi^{p/2}-\mu\xi|^{p/2}\right)\\
&\geq
K (N-p\mu) \xi^{p/2}+\mu(\mu-N)\xi\\
&+\xi^{p/2}|K\xi^{p/2}-\mu\xi|^{2-p}\left(K\b-|K\xi^{(p-2)/2}-\mu|^{p/2}\right)\\
&\geq\mu(\mu-N)\xi-K(p\mu-N)\xi^{p/2}+\xi^{p/2}|K\xi^{p/2}-\mu\xi|^{2-p}\left(K\b-|K\xi^{(p-2)/2}-\mu|^{p/2}\right).
\end{split}
\end{equation*}
Now, since $\xi_{\b}^{(2-p)/2}\leq K/\mu$ by \eqref{part7},
we have:

\noindent $\bullet$ either $\xi\geq(K/\mu)^{2/(2-p)}$, hence
$\xi^{(2-p)/2}\geq K/\mu$, $\mu\geq K\xi^{(p-2)/2}$, and, owing to
\eqref{part6},
$$
\left|K\xi^{(p-2)/2}-\mu\right|^{p/2}=\left(\mu-K\xi^{(p-2)/2}\right)^{p/2}\leq\mu^{p/2}\leq
K\b.
$$

\noindent $\bullet$ or $\xi_{\b}\leq\xi\leq(K/\mu)^{2/(2-p)}$, hence
$\xi^{(2-p)/2}\leq K/\mu$ or $\mu\leq K\xi^{(p-2)/2}$, and, since
$p\in(1,2)$, we infer from the previous inequalities and
\eqref{part7} that
$$
\left|K\xi^{(p-2)/2}-\mu\right|^{p/2}=\left(K\xi^{(p-2)/2}-\mu\right)^{p/2}\leq\left(K\xi^{(p-2)/2}\right)^{p/2}\leq\left(K\xi_{\b}^{(p-2)/2}\right)^{p/2} \le K\b.
$$
Moreover, \eqref{part7} guarantees that for $\xi\geq\xi_{\b}$,
\begin{equation*}
\begin{split}
\mu(\mu-N)\xi-(p\mu-N)\xi^{p/2}&=\xi^{p/2}\left[\mu(\mu-N)\xi^{(2-p)/2}-(p\mu-N)\right]\\
&\geq\xi^{p/2}\left[\mu(\mu-N)\xi_{\b}^{(2-p)/2}-(p\mu-N)\right]\geq0.
\end{split}
\end{equation*}
Consequently,
$$
\cl\Phi_{u}(\xi)\geq0, \quad \hbox{for} \ \xi\in[\xi_{\b},\infty).
$$
Since
$$
\Phi_{u}(\xi_{\b})=K\xi_{\b}^{p/2}=K\xi_{\b}^{(p-2)/2}\xi_{\b}\geq\mu\xi_{\b}\geq\Phi(\xi_{\b};\beta)
$$
by \eqref{estimate.Phi1} and \eqref{part7}, the comparison principle ensures that $\Phi_{u}(\xi)\geq\Phi(\xi;\beta)$
for $\xi\geq\xi_{\b}$. In addition, if $\xi\in(0,\xi_{\b})$, we also deduce from \eqref{estimate.Phi1} and \eqref{part7} that
$$
\Phi(\xi;\beta)<\mu\xi^{p/2}\xi^{(2-p)/2}\leq\mu\xi^{p/2}\xi_{\b}^{(2-p)/2}\leq
K\xi^{p/2}=\Phi_{u}(\xi),
$$
which concludes the proof.
\end{proof}

We notice that, at a formal level, if $\Phi(\xi;\beta)\sim K\xi^{p/2}$ as
$\xi\to\infty$, then $rw'(r;\b)\sim Kw(r;\b)^{p/2}$ as $r\to\infty$,
thus $w(r;\b)\sim(K\log r)^{2/(2-p)}$, which is exactly the
logarithmic behavior expected when $\b\in C$. Thus, we are led to
the idea of showing that, for $\b\in C$, the inequality
\eqref{ineq.C} is in fact an equality for a suitable value of $K$.
This will be done by comparison. We first have the following upper bound which improves \eqref{ineq.C}.

\begin{lemma}\label{lemma.super}
Consider $\b\in C$. The following inequality holds true
\begin{equation}\label{upper.est}
\Phi(\xi)\leq
K(\b)\xi^{p/2}+\left(\frac{p\mu-N}{\mu-N}-K(\b)\right)_{+}\xi_0^{p/2}
\quad {\rm for} \ \xi>\xi_0,
\end{equation}
where
\begin{equation}\label{xi0}
K(\b):=\frac{\mu^{p/2}}{\b}, \quad
\xi_0:=\left[\max\left\{K(\b),\frac{p\mu-N}{\mu-N}\right\}\frac{p\mu-N}{\mu(\mu-N)}\right]^{2/(2-p)}.
\end{equation}
\end{lemma}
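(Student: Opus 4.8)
The plan is to prove \eqref{upper.est} by a comparison argument, exhibiting its right-hand side as a supersolution of the first-order equation \eqref{equation.Phi} on $[\xi_0,\infty)$ and matching it with $\Phi=\Phi(\cdot;\beta)$ at the endpoint $\xi_0$. Recall that for $\beta\in C$ one has $\xi^*(\beta)=\infty$, so $\Phi$ is defined and positive on $(0,\infty)$. Writing $K_1:=\max\{K(\b),(p\mu-N)/(\mu-N)\}$, so that $\xi_0^{(2-p)/2}=K_1(p\mu-N)/(\mu(\mu-N))$ by \eqref{xi0}, I set
\[
\Phi_u(\xi):=K(\b)\xi^{p/2}+C,\qquad C:=(K_1-K(\b))\xi_0^{p/2}=\left(\frac{p\mu-N}{\mu-N}-K(\b)\right)_{+}\xi_0^{p/2}\ge 0,
\]
which is exactly the function appearing on the right-hand side of \eqref{upper.est}. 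The key bookkeeping remark is that $\Phi_u(\xi_0)=K_1\xi_0^{p/2}$, so $\Phi_u$ is pinned at $\xi_0$ to the cruder bound already available from Lemma~\ref{lemma.boundC}.

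For the initial ordering, I would apply \eqref{ineq.C} with the admissible constant $K_1$ (which satisfies \eqref{part6} with equality by construction) to obtain $\Phi(\xi_0)\le K_1\xi_0^{p/2}=\Phi_u(\xi_0)$. Since $\Phi_u>0$, equation \eqref{equation.Phi} can be solved for the derivative and recast as $\Phi'=G(\xi,\Phi)$, so the inequality $\cl\Phi_u\ge 0=\cl\Phi$ says precisely that $\Phi_u$ is a supersolution and $\Phi$ a solution of this scalar first-order ODE. Combined with $\Phi_u(\xi_0)\ge\Phi(\xi_0)$, the comparison principle (as invoked in the proof of Lemma~\ref{lemma.boundC}) then yields $\Phi(\xi)\le\Phi_u(\xi)$ for all $\xi>\xi_0$, which is \eqref{upper.est}.

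The crux, and the step I expect to be the main obstacle, is verifying $\cl\Phi_u\ge 0$ on $[\xi_0,\infty)$. Following the computation in the proof of Lemma~\ref{lemma.boundC} and discarding the nonnegative contribution $\tfrac{p(p-1)}{2}K(\b)^2\xi^{p-1}$ coming from $(p-1)\Phi_u\Phi_u'$, I would first group the purely algebraic terms as
\[
\mu(\mu-N)\xi-(p\mu-N)K(\b)\xi^{p/2}=\xi^{p/2}\big[\mu(\mu-N)\xi^{(2-p)/2}-(p\mu-N)K(\b)\big],
\]
which is nonnegative for $\xi\ge\xi_0$ precisely because $K(\b)\le K_1$ and $\xi_0^{(2-p)/2}=K_1(p\mu-N)/(\mu(\mu-N))$. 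There then remains the constant defect $-(p\mu-N)C$ produced by the term $(N-\mu p)\Phi_u$, nonzero only when $C>0$, and it must be absorbed by the gradient-absorption term $|\Phi_u-\mu\xi|^{2-p}\big(\b\Phi_u-|\Phi_u-\mu\xi|^{p/2}\big)$. Here the identity $\b K(\b)=\mu^{p/2}$ from \eqref{xi0} is decisive: writing $Y:=\Phi_u-\mu\xi$, it gives $\b\Phi_u=\mu^{p/2}\xi^{p/2}+\b C$, and a split according to the sign of $Y$ should yield $\b\Phi_u-|Y|^{p/2}\ge\b C>0$ (using $|Y|^{p/2}\le(\mu\xi)^{p/2}$ when $Y<0$, and a separate estimate when $Y\ge0$, i.e.\ $\Phi_u\ge\mu\xi$). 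The absorption term then dominates $|Y|^{2-p}\b C$, and the choice of $\xi_0$ forces $|Y|$ to be large enough that $|Y|^{2-p}\b C\ge(p\mu-N)C$. The delicate points are controlling the regime $Y\ge0$ near $\xi_0$ and obtaining a uniform lower bound on $|Y|^{2-p}$ from $\xi_0$ onward; these require the careful bookkeeping of constants already encoded in \eqref{part6}--\eqref{part7}, and the discarded cross term $\tfrac{p(p-1)}{2}K(\b)C\,\xi^{p/2-1}$ from $(p-1)\Phi_u\Phi_u'$ provides additional slack should the estimates prove too tight.
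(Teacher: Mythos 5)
Your global strategy is the same as the paper's: the same supersolution $\Phi_u(\xi)=K(\b)\xi^{p/2}+C$ with $C=\left(\frac{p\mu-N}{\mu-N}-K(\b)\right)_{+}\xi_0^{p/2}$, the same endpoint matching $\Phi(\xi_0)\le K_1\xi_0^{p/2}=\Phi_u(\xi_0)$ via Lemma~\ref{lemma.boundC} with $K_1:=\max\{K(\b),(p\mu-N)/(\mu-N)\}$, and the same comparison principle. The genuine gap is in your verification of $\cl\Phi_u\ge 0$. You retain only the \emph{nonnegativity} of the bracket $\xi^{p/2}\bigl[\mu(\mu-N)\xi^{(2-p)/2}-(p\mu-N)K(\b)\bigr]$ and then ask the absorption term to neutralize the constant defect $-(p\mu-N)C$, i.e.\ you need $\b|Y|^{2-p}\ge p\mu-N$ on $[\xi_0,\infty)$ whenever $C>0$. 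Nothing in the choice \eqref{xi0} of $\xi_0$ guarantees this, and it is false in general. The worst point is $\xi=\xi_0$, where $|Y(\xi_0)|=\mu\xi_0-K_1\xi_0^{p/2}=\xi_0^{p/2}K_1\mu(p-1)/(\mu-N)$; when $C>0$ one has $K_1=(p\mu-N)/(\mu-N)$, and $\b$ may be arbitrarily close to the threshold $\mu^{p/2}(\mu-N)/(p\mu-N)$ above which $C$ becomes positive, so your requirement reduces to $\mu^{2-3p/2}(p\mu-N)^p(p-1)^{2-p}\ge(\mu-N)^3$. This fails badly, e.g.\ for $N=1$, $p=1.9$ (left side $\approx 70$, right side $=18^3=5832$), and the cross term $\tfrac{p(p-1)}{2}K(\b)C\,\xi^{(p-2)/2}$ you keep in reserve is an order of magnitude too small to close the deficit. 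Nor can you dismiss such $\b$ as lying outside $C$: the set $C=(0,\b_*)$ is not explicitly known (identifying $\b_*$ is the whole point of the paper), and the positive part in \eqref{upper.est} shows that the case $C>0$ must be handled.

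The repair --- and this is exactly the paper's bookkeeping --- is \emph{not} to discard the surplus of the algebraic term: since $\xi\ge\xi_0$ gives $C\le\left(\frac{p\mu-N}{\mu-N}-K(\b)\right)_{+}\xi^{p/2}$, one has
\[
\mu(\mu-N)\xi-(p\mu-N)\Phi_u(\xi)\ \ge\ \xi^{p/2}\left[\mu(\mu-N)\xi^{(2-p)/2}-(p\mu-N)K_1\right]\ \ge\ 0,
\qquad \xi\ge\xi_0,
\]
by the definition of $\xi_0$, with equality at $\xi=\xi_0$; in other words, the bracket you threw away equals exactly $(p\mu-N)C$ at $\xi=\xi_0$ and dominates it beyond, so it is precisely the quantity needed to cancel the defect. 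After this grouping, the absorption term only needs to be \emph{nonnegative}, which your $Y<0$ computation already provides: $\b\Phi_u(\xi)\ge\b K(\b)\xi^{p/2}=\mu^{p/2}\xi^{p/2}\ge(\mu\xi)^{p/2}\ge|Y|^{p/2}$. Finally, the regime $Y\ge 0$ that you flag as delicate is in fact empty, but this should be proved rather than worried about: $\Phi_u(\xi)\le K_1\xi^{p/2}$ and $K_1\xi^{(p-2)/2}\le K_1\xi_0^{(p-2)/2}=\mu(\mu-N)/(p\mu-N)<\mu$ (since $p>1$), so $\Phi_u<\mu\xi$ on all of $[\xi_0,\infty)$; this verification is needed anyway, both for your sign split and to write the absorption term as $(\mu\xi-\Phi_u)^{2-p}\bigl(\b\Phi_u-(\mu\xi-\Phi_u)^{p/2}\bigr)$.
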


\begin{proof}
Let $\xi_0>0$ be given by \eqref{xi0}, $M>0$ to be determined later on, and define
$$
\Phi_{sup}(\xi):=K(\b)\xi^{p/2}+M\xi_{0}^{p/2}, \quad \xi\ge \xi_0.
$$
Then, for $\xi>\xi_0$, we have $\mu\xi \ge \Phi_{sup}(\xi)$ by \eqref{xi0} and
\begin{equation*}
\begin{split}
\cl\Phi_{sup}(\xi)&=\frac{p(p-1)}{2}\left(K(\b)\xi^{p/2}+M\xi_0^{p/2}\right)K(\b)\xi^{(p-2)/2}\\
&-(p\mu-N)\left(K(\b)\xi^{p/2}+M\xi_0^{p/2}\right)+\mu(\mu-N)\xi
+\left[\mu\xi-K(\b)\xi^{p/2}-M\xi_0^{p/2}\right]^{2-p}\\&\times\left[\b\left(K(\b)\xi^{p/2}+
M\xi_0^{p/2}\right)-\left[\mu\xi-K(\b)\xi^{p/2}-M\xi_0^{p/2}\right]^{p/2}\right]\\
&\geq\xi\left[\mu(\mu-N)-(p\mu-N)K(\b)\xi^{(p-2)/2}-(p\mu-N)M\xi_0^{p/2}\xi^{-1}\right]\\
&+\xi^{(4-p)/2}\left[\mu-K(\b)\xi^{(p-2)/2}-M\xi_0^{p/2}\xi^{-1}\right]^{2-p}\\&\times\left[\b
K(\b)+\b M\left(\frac{\xi_0}{\xi}\right)^{p/2}-\left[\mu-K(\b)\xi^{(p-2)/2}-M\xi_0^{p/2}\xi^{-1}\right]^{p/2}\right]
\end{split}
\end{equation*}
On the one hand, since $p\in(1,2)$ and $\xi\geq\xi_0$, we have
$\xi^{(p-2)/2}\leq\xi_0^{(p-2)/2}$ and $\xi^{-1}\leq\xi_0^{-1}$,
whence
\begin{equation*}
\begin{split}
\Big[\mu(\mu-N)\Big.&\Big.-(p\mu-N)K(\b)\xi^{(p-2)/2}-(p\mu-N)M\xi_0^{p/2}\xi^{-1}\Big]\\
&\geq\mu(\mu-N)-(p\mu-N)K(\b)\xi_0^{(p-2)/2}-(p\mu-N)M\xi_0^{(p-2)/2}\\&\geq\mu(\mu-N)-(p\mu-N)(K(\b)+M)\xi_0^{(p-2)/2},
\end{split}
\end{equation*}
and
\begin{equation*}
\begin{split}
\mu-K(\b)\xi^{(p-2)/2}-M\xi_0^{p/2}\xi^{-1}&\geq\mu-K(\b)\xi_0^{(p-2)/2}-M\xi_0^{(p-2)/2}\\
&=\mu-(K(\b)+M)\xi_0^{(p-2)/2}.
\end{split}
\end{equation*}
On the other hand, dropping some terms, we have
\begin{equation*}
\b\left[K(\b)+M\left(\frac{\xi_0}{\xi}\right)^{p/2}\right]-\left[\mu-K(\b)\xi^{(p-2)/2}-M\xi_0^{p/2}\xi^{-1}\right]^{p/2}\geq\b
K(\b)-\mu^{p/2}=0.
\end{equation*}
Choosing $M$ such that
\begin{equation}
(K(\b)+M)\xi_0^{(p-2)/2}\leq\frac{\mu(\mu-N)}{p\mu-N}<\mu, \label{zz7}
\end{equation}
we end up with $\cl\Phi_{sup}(\xi)\geq 0$ for $\xi\geq\xi_0$. In
addition,
$$
\Phi_{sup}(\xi_0)=(K(\b)+M)\xi_0^{p/2}\geq\max\left\{K(\b),\frac{p\mu-N}{\mu-N}\right\}\xi_0^{p/2}\geq\Phi(\xi_0),
$$
by Lemma~\ref{lemma.boundC}, provided
\begin{equation}
M\geq\max\left\{K(\b),\frac{p\mu-N}{\mu-N}\right\}-K(\b)=\left(\frac{p\mu-N}{\mu-N}-K(\b)\right)_{+}. \label{zz8}
\end{equation}
Taking $M=((p\mu-N)/(\mu-N)-K(\b))_{+}$, the choice \eqref{xi0} of $\xi_0$ guarantees that the conditions \eqref{zz7} and \eqref{zz8} are satisfied, so that we may apply the comparison principle and obtain the claimed upper bound.
\end{proof}

We now establish an optimal lower bound for $\Phi(\cdot;\beta)$.

\begin{lemma}\label{lemma.lower}
Consider $\b\in C$. Given $\e\in(0,K(\b))$ with $K(\b)$ defined in
\eqref{xi0}, there exists $\xi_{\e}>0$ such that
\begin{equation}\label{lowerC}
\Phi(\xi;\beta)\geq(K(\b)-\e)(\xi^{p/2}-\xi_{\e}^{p/2}), \quad
\xi>\xi_{\e}.
\end{equation}
\end{lemma}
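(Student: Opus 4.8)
The plan is to prove \eqref{lowerC} by constructing an explicit subsolution of \eqref{equation.Phi} and invoking the comparison principle, mirroring the supersolution argument of Lemma~\ref{lemma.super} but with the reversed inequality. Write $k:=K(\b)-\e$ and observe that, since $K(\b)=\mu^{p/2}/\b$ by \eqref{xi0} and $\e\in(0,K(\b))$, we have $\b k=\mu^{p/2}-\b\e<\mu^{p/2}$, which is the sign condition that will drive the whole argument. For a threshold $\xi_\e>0$ to be fixed later, I would set
\[
\Phi_{sub}(\xi):=k\left(\xi^{p/2}-\xi_\e^{p/2}\right), \quad \xi\ge\xi_\e,
\]
so that $\Phi_{sub}(\xi_\e)=0<\Phi(\xi_\e;\b)$ by \eqref{zz4}, while $\Phi_{sub}>0$ on $(\xi_\e,\infty)$.

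First I would record that, for $\xi_\e$ large, $\Phi_{sub}(\xi)<k\xi^{p/2}<\mu\xi$ on $[\xi_\e,\infty)$, so that $\mu\xi-\Phi_{sub}(\xi)>0$ and the absolute values in $\cl\Phi_{sub}$ resolve without sign ambiguity, exactly as in Lemma~\ref{lemma.super}. The heart of the proof is then the asymptotic analysis of $\cl\Phi_{sub}$ as $\xi\to\infty$. Since $\Phi_{sub}'(\xi)=\tfrac{kp}{2}\xi^{(p-2)/2}$, the three terms $(p-1)\Phi_{sub}\Phi_{sub}'$, $(N-\mu p)\Phi_{sub}$, and $\mu(\mu-N)\xi$ are of order $\xi^{p-1}$, $\xi^{p/2}$, and $\xi$ respectively, hence of order at most $\xi$. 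Expanding $\mu\xi-\Phi_{sub}=\mu\xi\bigl(1+O(\xi^{(p-2)/2})\bigr)$, the nonlinear term contributes
\[
|\Phi_{sub}-\mu\xi|^{2-p}\left(\b\Phi_{sub}-|\Phi_{sub}-\mu\xi|^{p/2}\right)=\mu^{2-p}\left(\b k-\mu^{p/2}\right)\xi^{(4-p)/2}\bigl(1+o(1)\bigr).
\]
Because $p\in(1,2)$ gives $(4-p)/2>1$ while $\b k-\mu^{p/2}<0$, this negative term dominates every other contribution, whence $\cl\Phi_{sub}(\xi)\to-\infty$ and in particular $\cl\Phi_{sub}(\xi)<0$ for all $\xi$ beyond some threshold; I would fix $\xi_\e$ at least as large as that threshold.

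With $\cl\Phi_{sub}<0=\cl\Phi(\cdot;\b)$ on $(\xi_\e,\infty)$, the ordering $\Phi_{sub}(\xi_\e)<\Phi(\xi_\e;\b)$, and both functions positive on $(\xi_\e,\infty)$, the comparison principle for \eqref{equation.Phi} yields $\Phi(\xi;\b)\ge\Phi_{sub}(\xi)$ for $\xi>\xi_\e$, which is precisely \eqref{lowerC}. The main obstacle is the uniform, and not merely asymptotic, verification that $\cl\Phi_{sub}\le0$ on the entire half-line $(\xi_\e,\infty)$: one must control the positive lower-order terms (of order at most $\xi$) by the leading negative term of order $\xi^{(4-p)/2}$ throughout, which is exactly what choosing $\xi_\e$ sufficiently large secures; the sole role of subtracting the constant $k\xi_\e^{p/2}$ is to guarantee the initial ordering from the positivity \eqref{zz4} of $\Phi$. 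A minor point to handle carefully is that $\Phi_{sub}(\xi_\e)=0$ degenerates the coefficient $(p-1)\Phi$ of $\Phi'$ in \eqref{equation.Phi} at the left endpoint, so the comparison should be run on the open interval $(\xi_\e,\infty)$, where the strict negativity of $\cl\Phi_{sub}$ prevents $\Phi_{sub}$ from ever catching up with $\Phi(\cdot;\b)$.
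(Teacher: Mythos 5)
Your proposal is correct and is essentially the paper's own proof: the same subsolution $\Phi_{sub}(\xi)=(K(\b)-\e)(\xi^{p/2}-\xi_\e^{p/2})$, the same driving sign condition $\b(K(\b)-\e)=\mu^{p/2}-\b\e<\mu^{p/2}$, and the same comparison argument in which the negative contribution of order $\xi^{(4-p)/2}$ dominates the positive terms of order at most $\xi$. The only point you should make explicit is that your choice of $\xi_\e$ is, as literally stated, self-referential (the threshold beyond which $\cl\Phi_{sub}<0$ is defined through $\Phi_{sub}$, which itself involves $\xi_\e$); the circularity disappears once one observes that every estimate uses only the bounds $0\le\Phi_{sub}(\xi)\le(K(\b)-\e)\xi^{p/2}$ valid for $\xi\ge\xi_\e$, so the threshold can be chosen independently of $\xi_\e$ --- this is exactly how the paper proceeds, deriving the non-circular conditions \eqref{part10}, \eqref{part8}, and \eqref{part9} before fixing $\xi_\e$ in \eqref{zz10}.
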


\begin{proof}
Consider $\e\in(0,K(\b))$ and
$\Phi_{sub}(\xi):=(K(\b)-\e)(\xi^{p/2}-\xi_{\e}^{p/2})$ for
$\xi\in(\xi_{\e},\infty)$, where $\xi_{\e}$ is to be determined.
We first notice that, for $\xi\ge \xi_\e$,
\begin{equation}
\begin{split}
\mu\xi - \Phi_{sub}(\xi) &\geq\xi\left(\mu-(K(\b)-\e)\xi^{(p-2)/2}\right)\\
&\geq\xi\left(\mu-K(\b)\xi_{\e}^{(p-2)/2}\right)\geq\frac{\mu}{2}\xi\geq0,
\end{split}\label{zz9}
\end{equation}
provided that
\begin{equation}\label{part10}
\xi_{\e}^{(p-2)/2}\leq \frac{\mu}{2K(\b)}.
\end{equation}
Then, for $\xi>\xi_{\e}$, we have $\mu\xi\ge \Phi_{sub}(\xi)>0$ and
\begin{equation*}
\begin{split}
\cl\Phi_{sub}(\xi)&=\frac{p(p-1)}{2}(K(\b)-\e) \Phi_{sub}(\xi) \xi^{(p-2)/2}\\
&-(p\mu-N) \Phi_{sub}(\xi) +\mu(\mu-N)\xi\\&+\left[\mu\xi-(K(\b)-\e)\xi^{p/2}+(K(\b)-\e)\xi_{\e}^{p/2}\right]^{2-p}\\
&\times\Big\{ \b(K(\b)-\e)\xi^{p/2}-\b(K(\b)-\e)\xi_{\e}^{p/2}\Big.\\&\Big.-\left[\mu\xi-(K(\b)-\e)\xi^{p/2}+(K(\b)-\e)\xi_{\e}^{p/2}\right]^{p/2}\Big\}\\
&\leq\Phi_{sub}(\xi)\left[\frac{p(p-1)}{2}K(\b)\xi_{\e}^{(p-2)/2}-(p\mu-N)\right]+\mu(\mu-N)\xi\\
&+\left[\mu\xi-(K(\b)-\e)\xi^{p/2}+(K(\b)-\e)\xi_{\e}^{p/2}\right]^{2-p}\\
&\times\left\{ \b(K(\b)-\e)\xi^{p/2} -\left[\mu\xi-(K(\b)-\e)\xi^{p/2}\right]^{p/2} \right\}.
\end{split}
\end{equation*}
Now, since $\xi\geq\xi_{\e}$, we have
$\xi^{(p-2)/2}\leq\xi_{\e}^{(p-2)/2}$ and we can use the sublinearity of $z\mapsto z^{p/2}$ to estimate
\begin{equation*}
\begin{split}
&\b(K(\b)-\e)\xi^{p/2}-\left(\mu\xi-(K(\b)-\e)\xi^{p/2}\right)^{p/2}\\
&\leq\xi^{p/2}\left[\b(K(\b)-\e)-\left(\mu-(K(\b)-\e)\xi^{(p-2)/2}\right)^{p/2}\right]\\
&\leq\xi^{p/2}\left[\mu^{p/2}-\b\e-\left(\mu-(K(\b)-\e)\xi_{\e}^{(p-2)/2}\right)^{p/2}\right]\\
&\leq\xi^{p/2}\left[(K(\b)-\e)^{p/2}\xi_{\e}^{(p-2)p/4}-\b \e\right]\\
&\leq-\frac{\b\e}{2}\xi^{p/2},
\end{split}
\end{equation*}
provided
\begin{equation}\label{part8}
(K(\b)-\e)\xi_{\e}^{(p-2)/2} \leq K(\b) \xi_{\e}^{(p-2)/2} \leq \left(\frac{\b\e}{2}\right)^{2/p}.
\end{equation}
Therefore, for $\xi\geq\xi_{\e}$, it follows from \eqref{zz9} that
\begin{equation*}
\begin{split}
\cl\Phi_{sub}(\xi)&\leq\Phi_{sub}(\xi)\frac{p(p-1)}{2}(K(\b)-\e)\left(\xi_{\e}^{(p-2)/2}-\frac{2(p\mu-N)}{p(p-1)K(\b)}\right)\\
&+\mu(\mu-N)\xi-\frac{\b\e}{2}\left(\frac{\mu}{2}\right)^{2-p}\xi^{(4-p)/2}\\
&\leq\Phi_{sub}(\xi)\frac{p(p-1)(K(\b)-\e)}{2}\left(\xi_{\e}^{(p-2)/2}-\frac{2(p\mu-N)}{p(p-1)K(\b)}\right)\\
&+\mu(\mu-N)\xi^{(4-p)/2}\left(\xi^{(p-2)/2}-\frac{\b\e}{2}\left(\frac{\mu}{2}\right)^{2-p}\frac{1}{\mu(\mu-N)}\right)\\
& \leq\Phi_{sub}(\xi)\frac{p(p-1)(K(\b)-\e)}{2}\left(\xi_{\e}^{(p-2)/2}-\frac{2(p\mu-N)}{p(p-1)K(\b)}\right)\\
& +\mu(\mu-N)\xi^{(4-p)/2}\left(\xi_\e^{(p-2)/2}-\frac{\b\e}{2}\left(\frac{\mu}{2}\right)^{2-p}\frac{1}{\mu(\mu-N)}\right) \leq 0,
\end{split}
\end{equation*}
provided that
\begin{equation}\label{part9}
\xi_{\e}^{(p-2)/2}\leq\frac{2(p\mu-N)}{p(p-1)K(\b)} \quad\mbox{ and }\quad
\xi_{\e}^{(p-2)/2}\leq\frac{\b\e}{2}\left(\frac{\mu}{2}\right)^{2-p}\frac{1}{\mu(\mu-N)}.
\end{equation}
So, if we set
\begin{equation}
\xi_{\e}^{(p-2)/2} := \min\left\{\frac{2(p\mu-N)}{p(p-1)K(\b)},\frac{\mu}{2K(\b)},\left(\frac{\b}{2}\right)^{2/p}\frac{\e^{2/p}}{K(\b)},
\frac{\b\e}{2\mu(\mu-N)}\left(\frac{\mu}{2}\right)^{2-p}\right\},\label{zz10}
\end{equation}
the conditions \eqref{part10}, \eqref{part8}, and \eqref{part9} are clearly
satisfied. Then, for $\xi_\e$ given by \eqref{zz10}, we have $\mathcal{L} \Phi_{sub}(\xi)\le 0$ for $\xi\ge \xi_\e$. In addition, $\Phi_{sub}(\xi_\e)=0<\Phi(\xi_\e)$ by \eqref{estimate.Phi1} and the comparison principle gives \eqref{lowerC}.
\end{proof}

Combining the outcome of Lemma~\ref{lemma.super} and Lemma~\ref{lemma.lower}, we may now identify the behavior of $\Phi(\xi;\beta)$ as $\xi\to\infty$ for $\beta\in C$.

\begin{corollary}\label{cor.C}
For $\b\in C$, we have
$$
\lim\limits_{\xi\to\infty}\frac{\Phi(\xi)}{\xi^{p/2}}=K(\b) = \frac{\mu^{p/2}}{\beta}.
$$
\end{corollary}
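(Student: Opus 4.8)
The plan is to read off the corollary directly from the matching upper and lower bounds already established, by dividing each through by $\xi^{p/2}$ and passing to the limit $\xi\to\infty$. No further analysis of \eqref{equation.Phi} is needed; the two comparison arguments in Lemma~\ref{lemma.super} and Lemma~\ref{lemma.lower} carry all the weight, and what remains is to extract the limit from these two one-sided estimates.

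First I would treat the upper bound. Dividing the inequality \eqref{upper.est} from Lemma~\ref{lemma.super} by $\xi^{p/2}$ gives, for $\xi>\xi_0$,
\begin{equation*}
\frac{\Phi(\xi;\beta)}{\xi^{p/2}}\leq K(\b)+\left(\frac{p\mu-N}{\mu-N}-K(\b)\right)_{+}\left(\frac{\xi_0}{\xi}\right)^{p/2}.
\end{equation*}
Since $p<2$ the second term tends to $0$ as $\xi\to\infty$ (the prefactor being a fixed nonnegative constant), so that
\begin{equation*}
\limsup_{\xi\to\infty}\frac{\Phi(\xi;\beta)}{\xi^{p/2}}\leq K(\b).
\end{equation*}

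Next I would treat the lower bound. Fix $\e\in(0,K(\b))$ and let $\xi_{\e}>0$ be the associated threshold provided by Lemma~\ref{lemma.lower}. Dividing \eqref{lowerC} by $\xi^{p/2}$ yields, for $\xi>\xi_{\e}$,
\begin{equation*}
\frac{\Phi(\xi;\beta)}{\xi^{p/2}}\geq(K(\b)-\e)\left(1-\left(\frac{\xi_{\e}}{\xi}\right)^{p/2}\right),
\end{equation*}
and letting $\xi\to\infty$ gives $\liminf_{\xi\to\infty}\Phi(\xi;\beta)/\xi^{p/2}\geq K(\b)-\e$. As $\e\in(0,K(\b))$ is arbitrary, we may let $\e\to0$ to conclude that
\begin{equation*}
\liminf_{\xi\to\infty}\frac{\Phi(\xi;\beta)}{\xi^{p/2}}\geq K(\b).
\end{equation*}

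Combining the two displayed bounds forces the $\limsup$ and $\liminf$ to coincide and to equal $K(\b)=\mu^{p/2}/\beta$, which is precisely the claim. In this instance there is essentially no obstacle to overcome: the genuinely delicate work—the construction of the supersolution $\Phi_{sup}$ and the subsolution $\Phi_{sub}$ with the precise coefficient $K(\b)$, together with verifying the sign of $\mathcal{L}\Phi_{sup}$ and $\mathcal{L}\Phi_{sub}$ under the constraints \eqref{zz7}, \eqref{zz8}, \eqref{part10}, \eqref{part8}, and \eqref{part9}—has already been carried out in the two preceding lemmas. The only point demanding a little care is that the lower bound is available solely for $\e$ strictly below $K(\b)$, but since the conclusion is obtained by sending $\e\to0$ this causes no difficulty.
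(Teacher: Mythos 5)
Your proposal is correct and follows essentially the same argument as the paper: both combine the upper bound \eqref{upper.est} of Lemma~\ref{lemma.super} and the lower bound \eqref{lowerC} of Lemma~\ref{lemma.lower}, divide by $\xi^{p/2}$, pass to the limit $\xi\to\infty$, and let $\e\to0$. No discrepancy worth noting.
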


\begin{proof}
Let $\e\in(0,K(\b))$. Then, for $\xi>\max\{\xi_0,\xi_{\e}\}$ we have
$$
(K(\b)-\e)\left( 1- \left( \frac{\xi_{\e}}{\xi} \right)^{p/2} \right)\leq\frac{\Phi(\xi)}{\xi^{p/2}}\leq
K(\b) + \left( \frac{p\mu-N}{\mu (\mu-N)} - K(\beta) \right)_+\ \left( \frac{\xi_0}{\xi} \right)^{p/2},
$$
by \eqref{upper.est} and \eqref{lowerC}. Therefore,
$$
K(\b)-\e \leq\liminf\limits_{\xi\to\infty}\frac{\Phi(\xi)}{\xi^{p/2}}\leq\limsup\limits_{\xi\to\infty}\frac{\Phi(\xi)}{\xi^{p/2}}=K(\b).
$$
Since the last inequalities are valid for all $\e\in(0,K(\b))$, the
conclusion follows by letting $\e\to0$.
\end{proof}

\subsection{Behavior as $\xi\to w^*$ for $\b\in B$}\label{seabB}

We turn now our attention to the case $\b\in B$. Then $\xi^*(\beta)=w^*$ and we first prove the following
preliminary result.

\begin{lemma}\label{lemma.limitB}
If $\b\in B$, then
\begin{equation}\label{limB}
\lim\limits_{\xi\to w^{*}}\Phi(\xi;\b)=\lim\limits_{r\to\infty}rw'(r;\b)=0.
\end{equation}
\end{lemma}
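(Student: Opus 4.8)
The plan is to reduce \eqref{limB} to the single assertion $\Phi(\xi;\b)\to 0$ as $\xi\to w^*$ and to extract it from \eqref{equation.Phi} read as an autonomous planar flow. First, since $w(\cdot;\b)$ is an increasing bijection of $[0,\infty)$ onto $[0,w^*)$ and $\Phi(w(r;\b);\b)=rw'(r;\b)$, the two limits in \eqref{limB} coincide, so it is enough to prove $\lim_{\xi\to w^*}\Phi(\xi;\b)=0$. Recall that $0<\Phi(\xi;\b)<\mu\xi<\mu w^*$ on $(0,w^*)$ by Lemma~\ref{lemma.estPhi}, so $\Phi$ is bounded. In the variable $\tau=\log r$, setting $y:=rw'=\dot w$, equation \eqref{newfunct} becomes an autonomous planar system $\dot w=y$, $\dot y=\mathcal{F}(w,y)$, whose orbit $\tau\mapsto(w,\Phi(w))$ satisfies $w\to w^*$ as $\tau\to\infty$; here $\mathcal F$ is smooth on the region $\{y<\mu w\}$ in which the orbit lies.

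The main step is to keep this orbit uniformly away from the singular line $\{y=\mu w\}$, i.e.\ to bound $|W|:=\mu\xi-\Phi=r^{\mu+1}|f'|$ away from $0$ near $w^*$. Using $|W|'=\mu-\Phi'$ together with the expression for $\Phi'$ coming from \eqref{equation.Phi} (where $|\Phi-\mu\xi|=|W|$), a short computation gives
\[
\frac{d|W|}{d\xi}=\frac{(\mu-N)\,|W|+\b\,\Phi\,|W|^{2-p}-|W|^{(4-p)/2}}{(p-1)\,\Phi}\, .
\]
Because $2-p<1<(4-p)/2$, the numerator is at least $|W|^{2-p}\bigl(\b\Phi-|W|^{p/2}\bigr)$, which is positive once $|W|$ is small and $\xi$ is bounded away from $0$ (so that $\Phi\simeq\mu\xi$ is bounded below). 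Hence there are $\e_0>0$ and $\xi_1\in(0,w^*)$ such that $d|W|/d\xi>0$ whenever $\xi\in(\xi_1,w^*)$ and $|W|(\xi)\le\e_0$; thus $|W|$ cannot cross the level $\e_0$ downwards on $(\xi_1,w^*)$, and $\liminf_{\xi\to w^*}|W|\ge c_*>0$, equivalently $\limsup_{\xi\to w^*}\Phi\le \mu w^*-c_*<\mu w^*$.

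It then suffices to run a standard $\omega$-limit argument. For large $\tau$ the orbit is confined to a compact subset $K$ of $\{y<\mu w\}$ on which $\mathcal F$ is smooth, so its $\omega$-limit set $\Omega$ is nonempty, compact and flow-invariant; since $w\to w^*$, we have $\Omega\subset\{w=w^*\}\cap K$. If $(w^*,b)\in\Omega$, invariance forces the entire orbit through this point to remain in $\{w=w^*\}$, which is possible only if $y\equiv0$ along it, whence $b=0$. Thus every point of $\Omega$ is an equilibrium lying on $\{w=w^*\}$, and by \eqref{zw} the only such equilibrium is $(w^*,0)$. Therefore $\Omega=\{(w^*,0)\}$, so $rw'(r;\b)\to0$ and $\Phi(\xi;\b)\to0$ as $\xi\to w^*$, which is \eqref{limB}.

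I expect the delicate point to be precisely the second step: a priori one only knows $\Phi<\mu\xi$, so the orbit could in principle accumulate on the singular line $\{y=\mu w\}$ as $\xi\to w^*$, where $\mathcal F$ fails to be Lipschitz and the invariance of $\Omega$ would break down; the sign of $d|W|/d\xi$ for small $|W|$ is exactly what excludes this. One can check independently that $\liminf_{\xi\to w^*}\Phi=0$ from $w'(\cdot;\b)\in L^1(0,\infty)$ via Lemma~\ref{lemma.sequence} applied to $w^*-w$, which furnishes an alternative endgame, but the control near the singular line remains the heart of the matter. Should one wish to avoid dynamical-systems language, the same two ingredients repackage as a sub-/supersolution analysis of \eqref{equation.Phi} in a neighborhood of $(w^*,0)$.
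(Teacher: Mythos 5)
Your proof is correct, but it takes a genuinely different route from the paper's. The paper's argument is much shorter and purely integral: writing \eqref{equation.Phi} as
\begin{equation*}
\frac{p-1}{2}(\Phi^2)'(\xi)=\left[(p\mu-N)-\beta(\mu\xi-\Phi(\xi))^{2-p}\right]\Phi(\xi)-\mu(\mu-N)\xi+(\mu\xi-\Phi(\xi))^{(4-p)/2},
\end{equation*}
and using only the bound $0<\Phi(\xi)<\mu\xi<\mu w^*$ of Lemma~\ref{lemma.estPhi}, it observes that $(\Phi^2)'$ is \emph{bounded} on the bounded interval $(0,w^*)$, hence integrable there; thus $\Phi^2=\int_0^{\cdot}(\Phi^2)'(\eta)\,d\eta$ has a limit at $w^*$, so $\Phi$ has a nonnegative limit $\ell=\lim_{r\to\infty}rw'(r;\beta)$, and since $w'(\cdot;\beta)>0$ together with $w(r;\beta)\to w^*<\infty$ forces $w'(\cdot;\beta)\in L^1(0,\infty)$, necessarily $\ell=0$. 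The key difference is that the paper works with $\Phi^2$ precisely so that no division by $\Phi$ ever occurs: in your identity for $d|W|/d\xi$ the factor $1/\Phi$ and the non-Lipschitz term $|W|^{2-p}$ are what force you to prove the barrier lemma keeping $|W|=\mu\xi-\Phi$ away from $0$, and then to invoke $\omega$-limit theory for the autonomous system in $\tau=\log r$. Your two steps are sound: the formula for $d|W|/d\xi$ is correct (using $\mu>N$ and $|W|^{(4-p)/2}=|W|^{2-p}|W|^{p/2}$), the no-downward-crossing argument does give $\liminf_{\xi\to w^*}|W|>0$ (note that when $|W|\le\varepsilon_0$ and $\xi\ge\xi_1$ one has $\Phi\ge\mu\xi_1-\varepsilon_0$, which justifies the positivity of the numerator), and the invariance argument is legitimate because the orbit eventually lies in a compact subset of the open region $\{y<\mu w\}$ where the vector field is locally Lipschitz, so a point $(w^*,b)\in\Omega$ with $b>0$ would be expelled from $\{w=w^*\}$, contradicting $\Omega\subset\{w=w^*\}$. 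So your proof stands, at the cost of more machinery; what the paper's trick buys is that the delicate point you single out --- possible accumulation on the singular line --- never has to be addressed, since boundedness of $(\Phi^2)'$ on a bounded interval already yields convergence. Conversely, your barrier step produces along the way the lower bound on $r^{\mu+1}|f'(r;\beta)|$ near infinity, information which in the paper only emerges a posteriori (and in sharper form, $\mu\xi-\Phi(\xi)\to\mu w^*$) once \eqref{limB} and Lemma~\ref{lemma.estinfB} are established.
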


\begin{proof}
Set $\Phi=\Phi(\cdot;\beta)$. By Lemma~\ref{lemma.estPhi}, we have $0<\Phi(\xi)<\mu\xi<\mu w^*$
and it follows from \eqref{equation.Phi} that
\begin{equation*}
\frac{p-1}{2}(\Phi^2)'(\xi)=\left[(p\mu-N)-\b(\mu\xi-\Phi(\xi))^{2-p}\right]\Phi(\xi)-\mu(\mu-N)\xi+(\mu\xi-\Phi(\xi))^{(4-p)/2},
\end{equation*}
whence
$$
\frac{p-1}{2}\left|(\Phi^2)'(\xi)\right|\leq\left[p\mu-N+\b(\mu
w^*)^{2-p}\right]\mu w^*+\mu(\mu-N)w^*+(\mu w^*)^{(4-p)/2}.
$$
Consequently, $(\Phi^2)'\in L^{1}(0,w^*)$ and
$$
\Phi(\xi)^2=\int_0^{\xi}(\Phi^2)'(\eta)\,d\eta
$$
has a limit as $\xi\to w^*$. This readily implies that $\Phi$ has a limit as
$\xi\to w^*$, which is denoted by $\Phi(\xi^*)$ and is
nonnegative by \eqref{estimate.Phi1}. Coming back to \eqref{def.Phi}, this fact ensures that
$rw'(r;\b)\to\Phi(\xi^*)$ as $r\to\infty$. Since the properties $w(r;\beta)\to w^*$ as $r\to \infty$ and $w'(\cdot;\beta)>0$ imply that $w'(\cdot;\beta)$ belongs to $L^1(0,\infty)$, we necessarily have
$\Phi(\xi^*)=0$ as claimed.
\end{proof}

\medskip

\noindent \textbf{Another formulation for \eqref{equation.Phi}. Consider $\beta\in B$.} Using the definition \eqref{zw} of $w^*$, we write the equation \eqref{equation.Phi} as follows:
\begin{equation*}
\begin{split}
(p-1)(\Phi\Phi')(\xi;\beta)&=(p\mu-N)\Phi(\xi;\beta)-\b\Phi(\xi;\beta)(\mu\xi-\Phi(\xi;\beta))^{2-p}\\&+(\mu\xi-\Phi(\xi;\beta))^{(4-p)/2}-\mu\xi(\mu w^*)^{(2-p)/2}\\
&=\left[(p\mu-N)-\b(\mu\xi-\Phi(\xi;\beta))^{2-p}\right]\Phi(\xi;\beta)\\
&+(\mu\xi-\Phi(\xi;\beta))(\mu\xi-\Phi(\xi;\beta))^{(2-p)/2}-\mu\xi(\mu w^*)^{(2-p)/2}\\
&=\left[(p\mu-N)-\b(\mu\xi-\Phi(\xi;\beta))^{2-p}-(\mu\xi-\Phi(\xi;\beta))^{(2-p)/2}\right]\Phi(\xi;\beta)\\
&+\mu\xi\left[(\mu w^*-\mu(w^*-\xi)-\Phi(\xi;\beta))^{(2-p)/2}-(\mu w^*)^{(2-p)/2}\right].
\end{split}
\end{equation*}
We introduce
\begin{equation}\label{alt1}
\mathcal{A}(\xi;\beta):=(p\mu-N)-\b(\mu\xi-\Phi(\xi;\beta))^{2-p}-(\mu\xi-\Phi(\xi;\beta))^{(2-p)/2}
\end{equation}
and
\begin{equation}\label{alt2}
\mathcal{B}(\xi;\beta):=-\frac{\left(\mu
w^*-\mu(w^*-\xi)-\Phi(\xi;\beta)\right)^{(2-p)/2}-(\mu
w^*)^{(2-p)/2}}{\mu(w^*-\xi)+\Phi(\xi;\beta)}
\end{equation}
for $\xi\in[0,w^*)$. We notice that Lemma~\ref{lemma.limitB} implies
that
\begin{equation}\label{limit1}
\lim\limits_{\xi\to w^*} \mathcal{A}(\xi;\beta)=(p\mu-N)-\b(\mu
w^*)^{2-p}-(\mu w^*)^{(2-p)/2} = (p-1)\mu - \beta (\mu-N)^2
\end{equation}
and
\begin{equation}\label{limit2}
\lim\limits_{\xi\to w^*} \mathcal{B}(\xi;\beta)=\frac{2-p}{2}(\mu w^*)^{-p/2}.
\end{equation}
Then, for $\xi\in (0,w^*)$,
\begin{equation*}
\begin{split}
(p-1)(\Phi\Phi')(\xi;\beta)&=\mathcal{A}(\xi;\beta)\Phi(\xi;\beta)-\mu\xi
\mathcal{B}(\xi;\beta)(\mu(w^*-\xi)+\Phi(\xi;\beta))\\&=(\mathcal{A}(\xi;\beta)-\mu\xi
\mathcal{B}(\xi;\beta))\Phi(\xi;\beta)-\mu^2\xi \mathcal{B}(\xi;\beta)(w^*-\xi),
\end{split}
\end{equation*}
hence we can write
$$
2(\Phi\Phi')(\xi;\beta)=a(\xi;\beta)\Phi(\xi;\beta)-2b(\xi;\beta)(w^*-\xi),
$$
where
\begin{equation}\label{alt3}
a(\xi;\beta):=\frac{2\left[\mathcal{A}(\xi;\beta)-\mu\xi \mathcal{B}(\xi;\beta)\right]}{p-1}, \quad
b(\xi;\beta):=\frac{\mu^2\xi \mathcal{B}(\xi;\beta)}{p-1}.
\end{equation}
Introducing $\Psi(\xi;\beta):=\Phi(w^*-\xi;\beta)$ for $\xi\in [0,w^*]$, we end up with the following
alternative formulation of \eqref{equation.Phi}:
\begin{equation}\label{equation.Psi}
2\Psi(\xi;\beta)\Psi'(\xi;\beta)+a(w^*-\xi;\beta)\Psi(\xi;\beta)-2b(w^*-\xi;\beta)\xi=0
\end{equation}
with initial condition $\Psi(0;\beta)=0$. Observe that it follows from \eqref{limit1} and \eqref{limit2} that
\begin{equation}\label{limit3}
\lim\limits_{\xi\to0}a(w^*-\xi;\beta)=a^*(\beta) \in\real \quad\mbox{ and }\quad
\lim\limits_{\xi\to0}b(w^*-\xi;\beta)=b^*(\beta)>0.
\end{equation}
With the help of this alternative form, we can study the behavior of
$\Phi(\xi;\beta)$ as $\xi\to w^*$. More precisely:

\begin{lemma}\label{lemma.estinfB}
Let $\b\in B$. There exists a constant $K^*(\beta)>0$ such that
\begin{equation}\label{lim.B}
\lim\limits_{\xi\to w^*}\frac{\Phi(\xi;\beta)}{w^*-\xi}=K^*(\beta).
\end{equation}
\end{lemma}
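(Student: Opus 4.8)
The plan is to work with the alternative formulation \eqref{equation.Psi} and to analyze the quotient
$$
y(\xi) := \frac{\Psi(\xi;\beta)}{\xi}, \qquad \xi\in(0,\delta),
$$
for some small $\delta>0$. Since $\Psi(0;\beta)=\Phi(w^*;\beta)=0$ by Lemma~\ref{lemma.limitB} and $\Psi(\cdot;\beta)>0$ on $(0,w^*)$ by \eqref{estimate.Phi1}, the conclusion \eqref{lim.B} is exactly the statement that $y(\xi)$ has a positive limit as $\xi\to0$, which will be the constant $K^*(\beta)$. Solving \eqref{equation.Psi} for $\Psi'$ on $(0,w^*)$ yields the clean relation
$$
\Psi'(\xi;\beta)=\frac{b(w^*-\xi;\beta)}{y(\xi)}-\frac{a(w^*-\xi;\beta)}{2},
$$
while differentiating $y=\Psi/\xi$ gives $\xi\,y'(\xi)=\Psi'(\xi;\beta)-y(\xi)$. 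Together with \eqref{limit3} these identify the only candidate for the limit: if $y(\xi)\to\lambda$, the first identity forces $\lambda=b^*(\beta)/\lambda-a^*(\beta)/2$, so $\lambda$ must be the unique positive root $K^*(\beta)$ of $2K^2+a^*(\beta)K-2b^*(\beta)=0$, which exists and is positive because $b^*(\beta)>0$.

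The second step is to establish the a priori bounds $0<\ell\le L<\infty$, where $\ell:=\liminf_{\xi\to0}y$ and $L:=\limsup_{\xi\to0}y$. Combining the two displayed identities shows that at any critical point $\xi$ of $y$ (where $\Psi'(\xi;\beta)=y(\xi)$) one has
$$
y(\xi)^2+\frac{a(w^*-\xi;\beta)}{2}\,y(\xi)-b(w^*-\xi;\beta)=0 .
$$
This is the workhorse: as the coefficients converge by \eqref{limit3}, any bounded sequence of critical points of $y$ tending to $0$ forces $y\to K^*(\beta)$, while $y$ can tend neither to $+\infty$ nor to $0$ along such a sequence (the left-hand side would tend to $+\infty$, respectively to $-b^*(\beta)\ne0$). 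I will use this, together with the mean value theorem in the form $y(\xi)=\Psi'(c_\xi;\beta)$ for some $c_\xi\in(0,\xi)$ (legitimate since $\Psi(0;\beta)=0$), to rule out the extreme cases. If $L=\infty$, then either $y$ oscillates, producing interior local maxima of arbitrarily large value and contradicting the critical-point identity, or $y\to\infty$ monotonically, in which case $\Psi'\to-a^*(\beta)/2$ and the mean value identity yields $y(\xi)\to-a^*(\beta)/2<\infty$, a contradiction. The bound $\ell>0$ is obtained symmetrically: local minima of $y$ with value tending to $0$ violate the critical-point identity, while $y\to0$ monotonically would force $\Psi'\to+\infty$ and hence $y\to+\infty$ through the mean value identity.

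With $0<\ell\le L<\infty$ in hand, I identify the limit by distinguishing two cases. If $y$ is eventually monotone, it has a finite positive limit $\lambda$, so $\Psi'(\xi;\beta)\to b^*(\beta)/\lambda-a^*(\beta)/2$ and the mean value identity gives $\lambda=b^*(\beta)/\lambda-a^*(\beta)/2$, whence $\lambda=K^*(\beta)$. If instead $y$ oscillates, it has both local maxima and local minima accumulating at $\xi=0$; since $y$ is now bounded, the critical-point identity forces every such extremal value to converge to $K^*(\beta)$. As $\limsup_{\xi\to0}y$ equals the limit superior of the local maximum values and $\liminf_{\xi\to0}y$ the limit inferior of the local minimum values, we conclude $\ell=L=K^*(\beta)$. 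In both cases $y(\xi)\to K^*(\beta)>0$, which is precisely \eqref{lim.B}.

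The main obstacle is the second step. The difficulty is that, viewed as a dynamical system in the time variable $-\log\xi$, the steady state $K^*(\beta)$ is repelling as $\xi\to0$, so the sub- and supersolution comparison used for $\beta\in C$ in Section~\ref{seabC} cannot be transplanted here. The convergence must instead be extracted from the behaviour of $y$ at its critical points combined with the mean value theorem, and some care is needed to organize the oscillating and monotone cases and to ensure that the local extrema realize the limit superior and limit inferior of $y$.
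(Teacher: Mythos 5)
Your proof is correct, but it proceeds by a genuinely different mechanism than the paper's. Both arguments start from the same reduction -- the formulation \eqref{equation.Psi} for $\Psi(\xi;\beta)=\Phi(w^*-\xi;\beta)$, the coefficient limits \eqref{limit3}, and the identification of $K^*(\beta)$ as the unique positive root of $2K^2+a^*K-2b^*=0$ -- but the paper then constructs explicit barriers: the supersolution $\Psi_{sup}(\xi)=\delta+K_\e\xi$ and the subsolution $\Psi_{sub}(\xi)=L_\e(\xi-\delta)$, with $K_\e$ and $L_\e$ the positive roots of the perturbed quadratics, and runs a first-contact comparison argument on $(0,\Xi_\e)$ before letting $\delta\to0$ and $\e\to0$. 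You instead study the ratio $y=\Psi/\xi$ by soft ODE analysis: the relation $\Psi'=b/y-a/2$, the identity $y^2+\tfrac{a}{2}y-b=0$ at every critical point of $y$, and the mean value theorem $y(\xi)=\Psi'(c_\xi)$, organized through the dichotomy ``eventually monotone versus oscillating''. Your route avoids guessing barriers and verifying differential inequalities, at the price of being purely qualitative (the paper's barriers give explicit two-sided bounds $L_\e\xi\le\Psi(\xi)\le K_\e\xi$ on a quantified interval) and of needing two standard but not completely trivial real-analysis facts that you state somewhat loosely: in Step 2 the case split should be ``$\liminf y<\limsup y=\infty$'' versus ``$y\to\infty$'' (no monotonicity is available or needed, since the mean value argument only uses $y\to\infty$), and in Step 3 the sandwich claim is cleanest phrased via critical points rather than local extrema -- every non-critical $\xi$ below the largest critical point satisfies $\min\{y(\xi_1),y(\xi_2)\}\le y(\xi)\le\max\{y(\xi_1),y(\xi_2)\}$, where $\xi_1,\xi_2$ are the nearest critical points below and above $\xi$ (they exist because the critical set is closed and accumulates at $0$), and all critical values converge to $K^*(\beta)$. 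One point in your closing paragraph is factually wrong, though it does not affect your proof: comparison \emph{can} be transplanted here, and this is exactly what the paper does. You are right that $K^*(\beta)$ is repelling in the time variable $-\log\xi$, so a comparison anchored at some $\xi_1>0$ and run backward toward $\xi=0$ would fail; but the paper anchors the comparison \emph{at} $\xi=0$, using the shift $\delta>0$ to create strict separation there ($\Psi(0)=0<\delta$), and propagates in the direction of increasing $\xi$, which is the attracting direction, so the first-contact argument goes through.
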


\begin{proof}
To simplify notation, we omit the $\beta$-dependence of $\Psi$, $K^*$, $a^*$, and $b^*$ in the proof. We use comparison with suitable subsolutions and supersolutions. Fix $\e\in(0,b^*)$ with $b^*$ introduced in \eqref{limit3}. Then, there
exists $\Xi_{\e}>0$ such that, for $\xi\in(0,\Xi_{\e})$,
\begin{equation}\label{estimate.alt}
a^*-\e\leq a(w^*-\xi)\leq a^*+\e, \quad b^*-\e\leq b(w^*-\e)\leq
b^*+\e.
\end{equation}
Consider $\delta>0$ and define $$\Psi_{sup}(\xi):=\delta+M\xi, \quad
\xi\in(0,\Xi_{\e}),$$ for some $M$ to be determined later. Then, for
$\xi\in(0,\Xi_{\e})$, we infer from \eqref{estimate.alt} that
\begin{equation*}
\begin{split}
2\Psi_{sup}(\xi)\Psi_{sup}'(\xi)&+a(w^*-\xi)\Psi_{sup}(\xi)-2b(w^*-\xi)\xi\\
&\geq2M(\delta+M\xi)+(a^*-\e)(\delta+M\xi)-2(b^*+\e)\xi\\
&\geq\left(2M^2+(a^*-\e)M-2(b^*+\e)\right)\xi+(2M+a^*-\e)\delta.
\end{split}
\end{equation*}
Choosing
$$
M=K_{\e}:=\frac{-(a^*-\e)+\sqrt{(a^*-\e)^2+16(b^*+\e)}}{4}>0,
$$
we note that $2K_{\e}^2+(a^*-\e)K_{\e}=2(b^*+\e) > 0$, so that also
$2K_{\e}+a^*-\e > 0$ and thus
\begin{equation}\label{part11}
2\Psi_{sup}(\xi)\Psi_{sup}'(\xi)+a(w^*-\xi)\Psi_{sup}(\xi)-2b(w^*-\xi)\xi>0
\end{equation}
for $\xi\in(0,\Xi_{\e})$. Since $\Psi(0)=0<\delta=\Psi_{sup}(0)$, we
have $\Psi<\Psi_{sup}$ in a right neighborhood of $\xi=0$, hence
$$
\overline{\xi}:=\inf\{\xi\in[0,\Xi_{\e}]:
\Psi(\xi)\geq\Psi_{sup}(\xi)\}>0.
$$
Assume for contradiction that $\overline{\xi}<\Xi_{\e}$. Then
$\Psi(\overline{\xi})=\Psi_{sup}(\overline{\xi})>0$ and
$\Psi(\xi)<\Psi_{sup}(\xi)$ for $\xi\in[0,\overline{\xi})$, so that
$\Psi'(\overline{\xi})\geq\Psi_{sup}'(\overline{\xi})$.  Moreover, \eqref{equation.Psi} and \eqref{part11} for
$\xi=\overline{\xi}$ imply
\begin{equation*}
\begin{split}
2\Psi(\overline{\xi})&\Psi_{sup}'(\overline{\xi})+a(w^*-\overline{\xi})\Psi(\overline{\xi})-2b(w^*-\overline{\xi})\overline{\xi}\\
&>0=2\Psi(\overline{\xi})\Psi'(\overline{\xi})+a(w^*-\overline{\xi})\Psi(\overline{\xi})-2b(w^*-\overline{\xi})\overline{\xi},
\end{split}
\end{equation*}
whence $\Psi_{sup}'(\overline{\xi})>\Psi'(\overline{\xi})$ and a
contradiction. We have thus shown that $\Psi(\xi)\leq\Psi_{sup}(\xi)$
for all $\xi\in[0,\Xi_{\e}]$, hence $\Psi(\xi)\leq\delta+K_{\e}\xi$,
for all $\xi\in[0,\Xi_{\e}]$. The above upper bound being true for
any $\delta>0$, we conclude that
\begin{equation}\label{upper.Psi}
\Psi(\xi)\leq K_{\e}\xi, \quad \hbox{for} \ \xi\in[0,\Xi_{\e}].
\end{equation}

\medskip

In order to obtain a similar lower bound, we next consider
$\delta\in(0,\Xi_{\e})$ and define
$$
\Psi_{sub}(\xi):=L(\xi-\delta), \quad \xi\in(\delta,\Xi_{\e})
$$
for some $L$ to be determined later. It then follows from
\eqref{limit3} that, for $\xi\in(\delta,\Xi_{\e})$, we have
\begin{equation*}
\begin{split}
2\Psi_{sub}(\xi)\Psi_{sub}'(\xi)&+a(w^*-\xi)\Psi_{sub}(\xi)-2b(w^*-\xi)\xi\\
&\leq2L^2(\xi-\delta)+L(a^*+\e)(\xi-\delta)-2(b^*-\e)\xi\\
&\leq\left(2L^2+(a^*+\e)L-2(b^*-\e)\right)\xi-L(2L+(a^*+\e))\delta.
\end{split}
\end{equation*}
Choosing
$$
L=L_{\e}:=\frac{-(a^*+\e)+\sqrt{(a^*+\e)^2+16(b^*-\e)}}{4},
$$
we note that $2L_{\e}^2+(a^*+\e)L_{\e}=2(b^*-\e)>0$, hence
\begin{equation}\label{part12}
2\Psi_{sub}(\xi)\Psi_{sub}'(\xi)+a(w^*-\xi)\Psi_{sub}(\xi)-2b(w^*-\xi)\xi<0,
\end{equation}
for $\xi\in(\delta,\Xi_{\e})$. Now,
$\Psi_{sub}(\delta)=0<\Psi(\delta)$ and, using \eqref{part12}, we argue as above by contradiction to show that
$$
L_{\e}(\xi-\delta)\leq\Psi(\xi), \quad \hbox{for} \
\xi\in[\delta,\Xi_{\e}].
$$
This lower bound being valid for any $\delta\in(0,\Xi_{\e})$, we
conclude that
\begin{equation}
L_{\e}\xi\leq\Psi(\xi), \quad \xi\in[0,\Xi_{\e}].\label{zz11}
\end{equation}
Observing that
$$
\lim\limits_{\e\to 0}L_{\e}=\lim\limits_{\e\to0}K_{\e}=K^*:=\frac{\sqrt{(a^*)^2+16b^*}-a^*}{4}>0,
$$
we infer from \eqref{upper.Psi} and \eqref{zz11} that $\Psi(\xi)/\xi\to K^*$ as $\xi\to 0$, from which the conclusion
follows.
\end{proof}

The last step needed for the proof of Theorem~\ref{th1}~(i) is related to the behavior of $\partial_\beta\Phi(\xi;\beta)$ as $\xi\to w^*$ when $\b$ lies in the interior of $B=[\b_*,\b^*]$ (if it is non-empty).

\begin{lemma}\label{lemma.estderiv}
Assume that $\beta_*<\beta^*$. Then
$$
(\xi,\beta)\longmapsto \Phi(\xi;\beta) \;\;\mbox{ belongs to }\;\; C([0,w^*)\times (\beta_*,\beta^*))\cap C^1((0,w^*)\times (\beta_*,\beta^*))
$$
and, for $\beta\in (\beta_*,\beta^*)$, $\partial_\beta\Phi(0;\beta)=0$,
\begin{equation}\label{estderiv}
\partial_\beta\Phi(\xi;\beta) \le 0 \;\;\mbox{ for }\;\; \xi\in (0,w^*) \;\;\mbox{ and }\;\; \lim_{\xi\to w^*} \partial_\beta\Phi(\xi;\beta) = - \infty\,.
\end{equation}
\end{lemma}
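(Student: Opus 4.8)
The plan is to obtain the regularity and sign of $\partial_\beta\Phi$ by soft arguments, and then to extract the blow-up as $\xi\to w^*$ from the linear ODE satisfied by $\partial_\beta\Phi$, the decisive point being the sign of the singular exponent of its coefficient at $w^*$. First, since $w'(\cdot;\beta)>0$ on $(0,\infty)$, the map $r\mapsto w(r;\beta)$ is a $C^1$-diffeomorphism of $[0,\infty)$ onto $[0,w^*)$ and $\Phi$ is given by $\Phi(w(r;\beta);\beta)=rw'(r;\beta)$. The joint $C^1$-dependence of $(r,\beta)\mapsto(w(r;\beta),w'(r;\beta))$, which already underlies the use of $\partial_\beta f$ and $\partial_\beta w$ in Section~\ref{semf}, together with the inverse function theorem and the positivity of $w'$ on $(0,\infty)$, shows that $(\xi,\beta)\mapsto\Phi(\xi;\beta)$ is $C^1$ on $(0,w^*)\times(\beta_*,\beta^*)$, while continuity up to $\xi=0$ follows from \eqref{expan2.Phi}. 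The bound $\partial_\beta\Phi\le 0$ is then Lemma~\ref{lemma.monotPhi}, read as a limit of difference quotients, and differentiating \eqref{expan2.Phi} with respect to $\beta$ gives
\[
\partial_\beta\Phi(\xi;\beta)=-\frac{1}{p-1}\left(\frac{\mu}{N}\right)^{1/(p-1)}\beta^{(2-p)/(p-1)}\xi^{1/(p-1)}+o(\xi^{1/(p-1)})
\]
as $\xi\to 0$, whence $\partial_\beta\Phi(0;\beta)=0$.

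Next, write $\Phi_\beta:=\partial_\beta\Phi$ and $V:=\mu\xi-\Phi(\xi;\beta)$, the latter being positive on $(0,w^*)$ by \eqref{estimate.Phi1}. Differentiating \eqref{equation.Phi} with respect to $\beta$ --- every term being smooth because $V>0$ --- and dividing by $(p-1)\Phi>0$ yields the linear equation
\[
\Phi_\beta'(\xi;\beta)+\mathcal{P}(\xi;\beta)\Phi_\beta(\xi;\beta)=\mathcal{Q}(\xi;\beta),\qquad \xi\in(0,w^*),
\]
where $\mathcal{Q}=-V^{2-p}/(p-1)<0$ and
\[
\mathcal{P}=\frac{(p-1)\Phi'+(N-\mu p)+\beta V^{2-p}-(2-p)\beta\Phi V^{1-p}+\tfrac{4-p}{2}V^{(2-p)/2}}{(p-1)\Phi}.
\]
The strict negativity of $\mathcal{Q}$ already reproves (and sharpens) the sign: at a hypothetical first zero $\xi_0\in(0,w^*)$ of $\Phi_\beta$ reached from below one would have $\Phi_\beta'(\xi_0)\ge 0$, whereas the equation forces $\Phi_\beta'(\xi_0)=\mathcal{Q}(\xi_0)<0$; hence $\Phi_\beta<0$ throughout $(0,w^*)$.

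Finally I turn to the behaviour as $\xi\to w^*$. By Lemma~\ref{lemma.estinfB}, $\Phi(\xi;\beta)/(w^*-\xi)\to K^*(\beta)>0$; solving \eqref{equation.Psi} for $\Psi'$ gives $\Psi'(\eta)=b(w^*-\eta;\beta)\,\eta/\Psi(\eta)-a(w^*-\eta;\beta)/2$, so that $\Psi'(\eta)\to b^*/K^*-a^*/2=K^*$ (using $2(K^*)^2+a^*K^*=2b^*$), equivalently $\Phi'(\xi;\beta)\to -K^*$. Since $V\to\mu w^*=(\mu-N)^{2/(2-p)}$, so that $V^{2-p}\to(\mu-N)^2$, $V^{(2-p)/2}\to\mu-N$ and $\Phi V^{1-p}\to 0$, the numerator of $\mathcal{P}$ tends to
\[
\Lambda:=-(p-1)K^*+(N-\mu p)+\beta(\mu-N)^2+\frac{4-p}{2}(\mu-N).
\]
Using the explicit limits $a^*(\beta),b^*(\beta)$ (see \eqref{limit1}, \eqref{limit2}, \eqref{limit3}) together with $K^*(K^*+a^*/2)=b^*$, a direct computation collapses $\Lambda$ to
\[
\Lambda=-(p-1)\left(K^*+\frac{a^*}{2}\right)=-\frac{p-1}{4}\left(a^*+\sqrt{(a^*)^2+16b^*}\right)<0,
\]
the last inequality because $b^*>0$. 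Hence $(w^*-\xi)\,\mathcal{P}(\xi;\beta)\to c:=\Lambda/((p-1)K^*)<0$ while $\mathcal{Q}\to-(\mu-N)^2/(p-1)<0$. Writing $\zeta:=-\Phi_\beta>0$, the equation becomes $\zeta'=-\mathcal{Q}-\mathcal{P}\zeta$; fixing $\varepsilon\in(0,|c|)$ there is $\xi_2<w^*$ with $-\mathcal{P}\ge(|c|-\varepsilon)/(w^*-\xi)$ and $-\mathcal{Q}>0$ on $(\xi_2,w^*)$, so $\zeta'/\zeta\ge(|c|-\varepsilon)/(w^*-\xi)$ there, and integration gives $\zeta(\xi)\ge\zeta(\xi_2)\big((w^*-\xi_2)/(w^*-\xi)\big)^{|c|-\varepsilon}\to+\infty$, that is $\partial_\beta\Phi(\xi;\beta)\to-\infty$ as $\xi\to w^*$. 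The crux --- and the step I expect to be hardest --- is precisely the identification and negativity of the singular exponent $c$: this requires upgrading Lemma~\ref{lemma.estinfB} to $\Phi'\to -K^*$ and then exploiting the quadratic relation defining $K^*$ to reduce $\Lambda$ to $-(p-1)(K^*+a^*/2)$, whose sign is settled by $b^*>0$, all remaining steps being routine comparison estimates on the linear ODE. This blow-up is exactly what is needed afterwards to exclude $\beta_*<\beta^*$ and collapse $B$ to a single point.
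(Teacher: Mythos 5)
Your proof is correct, and its skeleton coincides with the paper's: soft regularity via the inverse/implicit function theorem, $\partial_\beta\Phi\le 0$ from Lemma~\ref{lemma.monotPhi}, then a linear ODE for $\Phi_\beta:=\partial_\beta\Phi$ with a coefficient behaving like $c/(w^*-\xi)$, $c<0$, from which the blow-up follows by integration. (Your equation is literally the paper's: dividing \eqref{equation.Phi} by $\Phi$ first and then differentiating in $\beta$, as the paper does to get \eqref{alt5}, produces $T/(p-1)=\mathcal{P}$, so the two linear ODEs are identical.) Where you genuinely diverge is in the two technical steps. First, for the coefficient asymptotics, the paper never uses a limit for $\Phi'$: it splits $T=T_1+T_2+T_3$ and Taylor-expands $(\mu\xi-\Phi)^{(2-p)/2}$ around $(\mu w^*)^{(2-p)/2}$ using only $\Phi\sim K^*(w^*-\xi)$, the leading singularities of $T_1$ and $T_2$ partially cancelling. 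You instead upgrade Lemma~\ref{lemma.estinfB} to $\Phi'(\xi)\to -K^*$ by solving \eqref{equation.Psi} for $\Psi'$ and passing to the limit --- a clean observation that makes the limit of the numerator immediate --- and then settle the sign by the algebraic identity $\Lambda=-(p-1)(K^*+a^*/2)<0$. I checked this identity (only the value of $a^*$ from \eqref{limit1}--\eqref{limit3} is actually needed) and your constant $c=-b^*/(K^*)^2$; it agrees with what the paper's displayed asymptotics of $T_1+T_2$ give, whereas the paper's stated $\kappa_0$ in \eqref{part13} appears to carry a typo --- harmless there and here, since only positivity of the constant is used. Second, in the endgame the paper keeps the forcing term and compares $\Phi_\beta$ with an explicit solution $Z$ of an inhomogeneous linear ODE, which requires only $\Phi_\beta\le 0$; you drop the forcing and run a homogeneous Gr\"onwall argument, which requires $\Phi_\beta(\xi_2)<0$ strictly --- and you supply exactly that through the first-zero argument based on $\mathcal{Q}<0$, which moreover sharpens the sign statement to strict negativity on $(0,w^*)$. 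Both routes are sound; yours trades the explicit comparison solution for one extra qualitative fact about $\Phi'$.

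One minor blemish: obtaining the expansion of $\partial_\beta\Phi$ near $\xi=0$ by differentiating \eqref{expan2.Phi} term-by-term in $\beta$ is not justified as written (the error term has no stated uniformity in $\beta$; the paper faces the same issue for $\partial_\beta f$ and resolves it by redoing the expansion on the differentiated ODE, cf.\ Lemma~\ref{lemma.exp2}). But nothing in your argument actually needs it: $\partial_\beta\Phi(0;\beta)=0$ is immediate from $\Phi(0;\beta)=0$ for every $\beta$, and your first-zero argument only needs $\Phi_\beta\le 0$ on the left of the putative zero, which Lemma~\ref{lemma.monotPhi} already provides.
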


\begin{proof}
First, the regularity of $\Phi$ and the property $\partial_\beta\Phi(0;\beta)=0$ for $\beta\in (\beta_*,\beta^*)$ follow from the regularity of $w$ and its monotonicity with respect to $r$ by \eqref{def.Phi} and the implicit function theorem, while the non-positivity of $\partial_\beta\Phi(\cdot;\beta)$ is a consequence of Lemma~\ref{lemma.monotPhi}. Next, we fix $\beta\in (\beta_*,\beta^*)$ and set $\Phi=\Phi(\cdot;\beta)$ and $\Phi_{\b}=\partial_{\b}\Phi(\cdot;\b)$. Dividing \eqref{equation.Phi} by $\Phi$ gives, for $\xi\in (0,w^*)$,
\begin{equation}\label{alt4}
(p-1)\Phi'(\xi)+(N-p\mu)+\frac{\mu(\mu-N)\xi}{\Phi(\xi)}+\b(\mu\xi-\Phi(\xi))^{2-p}-\frac{(\mu\xi-\Phi(\xi))^{(4-p)/2}}{\Phi(\xi)}=0.
\end{equation}
We differentiate the equation \eqref{alt4} with respect to $\b$ to
obtain
\begin{equation}\label{alt5}
(p-1)\Phi_{\b}'(\xi)+T(\xi)\Phi_{\b}(\xi)+(\mu\xi-\Phi(\xi))^{2-p}=0, \quad \xi\in (0,w^*),
\end{equation}
with
\begin{equation*}
\begin{split}
T(\xi)&:=\frac{\left(\mu\xi-\Phi(\xi)\right)^{(4-p)/2}-\mu\xi(\mu-N)}{\Phi(\xi)^2}+\frac{4-p}{2}\frac{\left(\mu\xi-\Phi(\xi)\right)^{(2-p)/2}}{\Phi(\xi)}\\
&-\b(2-p)(\mu\xi-\Phi(\xi))^{1-p}.
\end{split}
\end{equation*}
We now estimate $T(\xi)$ as $\xi\to w^*$: it follows from \eqref{zw} that $T=T_1+T_2+T_3$ with
\begin{eqnarray*}
T_1(\xi) & := & \mu\xi\frac{\left(\mu\xi-\Phi(\xi)\right)^{(2-p)/2}-(\mu
w^*)^{(2-p)/2}}{\Phi(\xi)^2}\\
T_2(\xi) & := & \frac{2-p}{2}\frac{\left(\mu\xi-\Phi(\xi)\right)^{(2-p)/2}}{\Phi(\xi)}\,, \quad T_3(\xi) := -\b(2-p)(\mu\xi-\Phi(\xi))^{1-p}\,.
\end{eqnarray*}
Owing to \eqref{lim.B}, we have as $\xi\to w^*$,
\begin{eqnarray*}
T_1(\xi) & = & \frac{\mu\xi}{\Phi(\xi)^2}\ \left[ \left( \mu w^* - \mu (w^*-\xi) - K^* (w^*-\xi) + o(w^*-\xi) \right)^{(2-p)/2} - (\mu w^*)^{(2-p)/2} \right] \\
& = & \frac{\mu\xi}{\Phi(\xi)^2}\ (\mu w^*)^{(2-p)/2}\ \left[ \left( 1 - \frac{\mu+K^*}{\mu w^*} (w^*-\xi) + o(w^*-\xi) \right)^{(2-p)/2} - 1 \right] \\
&  \sim & - \frac{2-p}{2}\ \frac{(\mu w^*)^{(2-p)/2}}{(K^*)^2}\ \frac{\mu+K^*}{w^*-\xi}\,,
\end{eqnarray*}
and
\begin{equation*}
T_2(\xi) \sim \frac{2-p}{2} \frac{(\mu w^*)^{(2-p)/2}}{K^* (w^*-\xi)}\,, \qquad T_3(\xi) \sim - \beta(2-p) (\mu w^*)^{1-p}\,.
\end{equation*}
Therefore,
\begin{equation}\label{part13}
T(\xi)\sim - \frac{2(p-1)\kappa_0}{w^*-\xi} \quad \hbox{as} \ \xi\to w^* \;\;\mbox{ with }\;\; \kappa_0 := \frac{\mu p (\mu-N)}{4(p-1) K^*}>0.
\end{equation}

\medskip

Owing to \eqref{limB} and \eqref{part13}, there is $\delta_\beta\in (0,w^*)$ such that
\begin{eqnarray}
T(\xi) & \le & - \frac{(p-1)\kappa_0}{w^*-\xi}\,, \qquad \xi\in (w^*-\delta_\beta,w^*)\,, \label{zz30} \\
\frac{(\mu\xi - \Phi(\xi))^{2-p}}{p-1} & \ge & \kappa_1 := \frac{1}{p-1}\ \left( \frac{\mu w^*}{2} \right)^{2-p}\,, \qquad \xi\in (w^*-\delta_\beta,w^*)\,. \label{zz31}
\end{eqnarray}
Consider now the solution $Z\in C^1([w^*-\delta_\beta,w^*))$ to
\begin{equation}
Z'(\xi) - \frac{\kappa_0}{w^*-\xi}\ Z(\xi) + \kappa_1 = 0\,, \qquad \xi\in (w^*-\delta_\beta,w^*)\,, \label{zz32}
\end{equation}
with initial condition $Z(w^*-\delta_\beta)=0$. We infer from \eqref{alt5}, \eqref{zz30}, \eqref{zz31}, \eqref{zz32}, and the non-positivity of $\Phi_\beta$ that $\Phi_\beta(w^*-\delta_\beta)\le 0=Z(w^*-\delta_\beta)$ and, for $\xi\in (w^*-\delta_\beta,w^*)$,
\begin{eqnarray*}
\Phi_\beta'(\xi) - \frac{\kappa_0}{w^*-\xi}\ \Phi_\beta(\xi) + \kappa_1 & = & - \frac{\Phi_\beta(\xi)}{p-1}\ \left( T(\xi) + \frac{(p-1)\kappa_0}{w^*-\xi} \right) \\
& + & \kappa_1 -  \frac{(\mu\xi - \Phi(\xi))^{2-p}}{p-1} \le 0\,.
\end{eqnarray*}
The comparison principle then ensures that
\begin{equation}
\Phi_\beta(\xi) \le Z(\xi)\,, \qquad \xi\in [w^*-\delta_\beta,w^*)\,. \label{zz33}
\end{equation}
Since
$$
Z(\xi) = \frac{\kappa_1}{1+\kappa_0}\ \left( w^* - \xi - \frac{\delta_\beta^{1+\kappa_0}}{(w^*-\xi)^{\kappa_0}} \right)\,, \qquad \xi\in [w^*-\delta_\beta,w^*)\,,
$$
the function $Z$ clearly diverges to $-\infty$ as $\xi\to w^*$ and so does $\Phi_\beta$ by \eqref{zz33}.
\end{proof}

\subsection{Proof of Theorem~\ref{th1}}\label{seeop}

We are now ready to conclude the proof of Theorem~\ref{th1}.

\begin{proof}[Proof of Theorem~\ref{th1}~(i)] It only remains to prove that the set $B$ reduces to one element. Assume thus for contradiction that $\beta_*<\beta^*$. It follows from \eqref{limB}, Lemma~\ref{lemma.estderiv}, and Fatou's lemma that, given $\beta_*<\beta_1<\beta_2<\beta^*$,
\begin{eqnarray*}
0 = \lim_{\xi\to w^*} \left( \Phi(\xi;\beta_1) - \Phi(\xi;\beta_2) \right) & = & \lim_{\xi\to w^*} \int_{\beta_1}^{\beta_2} \left|\partial_\beta \Phi(\xi;\gamma) \right|\ d\gamma \\
& \ge & \int_{\beta_1}^{\beta_2} \liminf_{\xi\to w^*} \left|\partial_\beta \Phi(\xi;\gamma) \right|\ d\gamma = \infty\,,
\end{eqnarray*}
and a contradiction. Then, $B$ reduces to one single point $B=\{\b_*\}$ and Theorem~\ref{th1}~(i) is proved.
\end{proof}

\begin{proof}[Proof of Theorem~\ref{th1}~(ii)] Let $\b\in C$ and $\e\in (0,K(\beta))$.
Recalling the definition \eqref{def.Phi} of $\Phi$, it follows from Corollary~\ref{cor.zz} and Corollary~\ref{cor.C} that there is $r_\e\ge 1$ such that
$$
K(\b)-\e\leq\frac{rw'(r;\b)}{w(r;\b)^{p/2}}\leq K(\b) + \e, \qquad r\ge r_\e\,.
$$
Integrating the above inequalities with respect to $r$ gives
$$
(K(\b)-\e)\log{r} + \frac{2 w(r_\e;\b)^{(2-p)/2}}{2-p} - (K(\b)-\e)\log{r_\e} \le \frac{2 w(r;\b)^{(2-p)/2}}{2-p}
$$
and
$$
\frac{2 w(r;\b)^{(2-p)/2}}{2-p} \le (K(\b)+\e)\log{r} + \frac{2 w(r_\e;\b)^{(2-p)/2}}{2-p} - (K(\b)+\e)\log{r_\e}
$$
for $r\ge r_\e$. Consequently,
$$
\lim\limits_{r\to\infty} \frac{w(r;\b)^{(2-p)/2}}{\log{r}} =
\frac{2-p}{2}\ K(\b),
$$
and we get the conclusion of Theorem~\ref{th1}~(ii) since $\mu+1=2/(2-p)$.
\end{proof}

\section*{Acknowledgements}
Part of this work was done while the first author enjoyed the hospitality and
support of the Institut de Math\'ematiques de Toulouse, France.

\bibliographystyle{plain}


\end{document}